\documentclass[reqno]{amsart}

\makeatletter
\@namedef{subjclassname@2020}{\textup{2020} Mathematics Subject Classification}
\makeatother

\usepackage[english]{babel}
\usepackage{amsmath,amssymb,amsthm,caption,enumerate,booktabs}
\usepackage[bookmarks=false]{hyperref}
\usepackage[centering]{geometry}
\usepackage{xcolor}
\usepackage{color}
\renewcommand{\epsilon}{\varepsilon}            

\newtheorem{theorem}{Theorem}[section]   
\newtheorem*{theorem*}{Theorem}          

\newtheorem{proposition}[theorem]{Proposition}

\theoremstyle{definition}

\newtheorem{example}{Example}[section]

\newtheorem{remark}{Remark}[section]
\newtheorem*{acknow}{Acknowledgments}

\numberwithin{equation}{section}

\title[Homogeneous hypersurfaces of $\mathrm{Sol}_1^4$, $\mathrm{Sol}_{m,n}^4$ and $\mathrm{Nil}^4$]
{Homogeneous hypersurfaces of the four-dimensional Thurston geometries $\mathrm{Sol}_1^4$, $\mathrm{Sol}_{m,n}^4$ and $\mathrm{Nil}^4$}

\author[Xiaoge Lu, Zeke Yao and Xi Zhang]
{Xiaoge Lu, Zeke Yao and Xi Zhang}

\address{School of Mathematics and Statistics, Zhengzhou University,
Zhengzhou 450001, People's Republic of China}
\email{lxgzzu@163.com}

\address{School of Mathematical Sciences, South China Normal University,
Guangzhou 510631, People's Republic of China}	
\email{yaozkleon@163.com,\  zhangxisq@163.com}

\keywords{Homogeneous hypersurface, angle function, solvable Lie group, Thurston geometry.}

\subjclass[2020]{Primary 53C42; Secondary 53C40, 53C30}

\thanks{X. Zhang is the corresponding author}

\begin{document}
	
\begin{abstract}
In this paper, we focus on the four-dimensional Thurston geometries whose isometry groups are four-dimensional, namely $\mathrm{Sol}_1^4$, $\mathrm{Sol}_{m,n}^4$ and $\mathrm{Nil}^4$.
We classify homogeneous hypersurfaces in the above three manifolds.
\end{abstract}
	
\maketitle
	
\section{Introduction}\label{sect:1}

Homogeneous hypersurfaces serve as the fundamental models in isoparametric theory, providing the canonical examples for its complete classification. Here, a hypersurface $M$ in Riemannian manifold $\widetilde{M}$ is called {\it homogeneous} if there exists a closed subgroup $G\subset\operatorname{Iso}_o(\widetilde{M})$ such that $M = G \cdot p=\{g \cdot p \mid g \in G \}$ for some point $p \in \widetilde{M}$, where $\operatorname{Iso}_o(\widetilde{M})$ is the connected component of the identity of isometry group of $\widetilde{M}$.

The classification of homogeneous hypersurfaces is a classical topic. Moreover,
classifying such hypersurfaces is equivalent to classifying cohomogeneity one actions
up to orbit equivalence.
For real space forms, the classification of homogeneous hypersurfaces can be found in references
\cite{CE,HL1,SB,TT}. For nonflat complex space forms, the homogeneous hypersurfaces therein have been classified by Takagi \cite{TR} and Berndt-Tamaru \cite{BT}.
Kollross \cite{KA} classified cohomogeneity one actions on the irreducible symmetric spaces of compact type up to orbit equivalence. Later,
D\'iaz-Ramos, Dom\'inguez-V\'azquez and Otero \cite{DDO1} developed a structural result for cohomogeneity one actions on (not necessarily irreducible) symmetric spaces of noncompact type and arbitrary rank. Most recently, Sanmart\'in-L\'opez and Solonenko \cite{SS} completed the classification of isometric cohomogeneity one actions on such spaces up to orbit equivalence.

For Riemannian products of real space forms, which belong to reducible symmetric spaces, the study of homogeneous hypersurfaces and isoparametric hypersurfaces has achieved many interesting results.
Urbano \cite{UF}, Gao-Ma-Yao \cite{GMY}, Dom\'inguez-V\'azquez and Manzano \cite{DM}, and de Lima and Pipoli \cite{DP} classified homogeneous hypersurfaces and isoparametric hypersurfaces in $\mathbb{S}^2 \times \mathbb{S}^2$, $\mathbb{H}^2 \times \mathbb{H}^2$, $\mathbb{S}^n \times \mathbb{R}$ and $\mathbb{H}^n \times \mathbb{R}$ ($n \geq 2$), respectively.
Gao-Ma-Yao \cite{GMY1} classified isoparametric hypersurfaces in the product space $M_{\kappa_1}^2 \times M_{\kappa_2}^2$ of two-dimensional space forms for $\kappa_1,\kappa_2 \in \{-1,0,1\}$ with $\kappa_1 \neq \kappa_2$. Tan-Xie-Yan \cite{TXY1} established a complete classification of homogeneous hypersurfaces and isoparametric hypersurfaces in $\mathbb{S}^n\times\mathbb{R}^m$ and $\mathbb{H}^n\times\mathbb{R}^m$. Subsequently,
de Lima and Pipoli \cite{DP2} classified homogeneous hypersurfaces and isoparametric hypersurfaces in
$M_{\kappa_1}^{n_1} \times M_{\kappa_2}^{n_2}$ for $n_1,n_2\geq2$ and $\kappa_1,\kappa_2 \in \{-1,0,1\}$ with $|\kappa_1|+|\kappa_2|\neq0$ satisfying a one-point condition.

As far as the authors know, there has been little research on homogeneous hypersurfaces in homogeneous Riemannian manifolds that are non-symmetric spaces.
Notice that $\widetilde{{\rm SL}}(2,\mathbb{R})$,
	${\rm Nil}^3$, ${\rm Sol}^3$ and ${\rm Sol}_0^4$ are homogeneous Riemannian manifolds but not symmetric spaces. The classification of homogeneous surfaces in $\widetilde{{\rm SL}}(2,\mathbb{R})$,
	${\rm Nil}^3$ and ${\rm Sol}^3$ can be found in references \cite{DFO,DM,MP}.
Very recently, D'haene-Wei-Yao-Zhang \cite{DWYZ} established a complete classification for homogeneous hypersurfaces of the four-dimensional Thurston geometry $\mathrm{Sol}_{0}^{4}$. For more developments on the study of homogeneous hypersurfaces and isoparametric theory, we refer the readers to the review articles \cite{BS3,CR,CQ,DDO,GQTY} and the references therein.

Based on the classification of three-dimensional model geometries (known as Thurston geometry), Thurston \cite{TW} proposed the well-known geometrization conjecture: Every compact three-manifold admits a canonical decomposition into pieces, each modeled on one of eight geometries: $\mathbb{R}^3$, $\mathbb{S}^3$, $\mathbb{H}^3$, $\mathbb{S}^2\times\mathbb{R}$, $\mathbb{H}^2\times\mathbb{R}$, $\widetilde{{\rm SL}}(2,\mathbb{R})$,
${\rm Nil}^3$ and ${\rm Sol}^3$. While there is no analog of the geometrization conjecture for four-dimensional manifolds, there are 19 kinds of four-dimensional Thurston geometry (cf. \cite{FR,WCTC}):

\begin{table}[h]
\centering
\renewcommand{\arraystretch}{1.05}
\begin{tabular}{|c|c|}
\hline
Four-dimensional Thurston geometry    & Dimension of isometry group \\  \hline
$\mathbb{R}^4, \, \mathbb{S}^4, \, \mathbb{H}^4$ & 10 \\ \hline
$\mathbb{C}P^2, \, \mathbb{C}H^2$ & 9 \\ \hline
$\mathbb{S}^3 \times \mathbb{R}, \, \mathbb{H}^3 \times \mathbb{R}$ & 7 \\ \hline
$\mathbb{S}^2 \times \mathbb{S}^2, \, \mathbb{H}^2 \times \mathbb{H}^2, \,
\mathbb{S}^2 \times \mathbb{R}^2, \, \mathbb{H}^2 \times \mathbb{R}^2, \,
\mathbb{S}^2 \times \mathbb{H}^2$ & 6 \\ \hline
$\text{Sol}_0^4, \, {\rm F^4}, \, \widetilde{\text{SL}}(2, \mathbb{R}) \times \mathbb{R}, \, \text{Nil}^3 \times \mathbb{R}$ & 5 \\ \hline
$\text{Sol}_1^4, \, \text{Sol}_{m,n}^4, \,\text{Nil}^4$ & 4 \\ \hline
\end{tabular}
\end{table}

In this paper, we will focus on the study of homogeneous hypersurfaces in the four-dimensional Thurston geometry with four-dimensional isometry group. These are the manifolds $\text{Sol}_1^4$, $\text{Sol}_{m,n}^4$ and $\text{Nil}^4$, and they are all homogeneous manifolds and non-symmetric spaces. As main results, we have the following three classification theorems.

\begin{theorem}\label{thm:1.2}
Let $M$ be a homogeneous hypersurface of $\mathrm{Sol}_1^4$. Then up to isometries of $\mathrm{Sol}_1^4$, one of the following two cases occurs:
\begin{enumerate}[\rm(1)]
  \item $M$ is $M_{1,r}$ for some $r\geq0$, which is an orbit through
  $(\tanh r,0,0,-\ln(\cosh r))$ of the subgroup
 $\{(0,x_1,x_2,x_3)\in \mathrm{Sol}_1^4\mid  x_1,x_2,x_3\in \mathbb R\}$, see Example \ref{exa:3.1};
  \item  $M$ is $M_{2,0}$, which is an orbit through the origin
  $(0,0,0,0)$ of the subgroup
 $\{(x_1,x_2,x_3,0)\in \mathrm{Sol}_1^4\mid  x_1,x_2,x_3\in \mathbb R\}$, see Example \ref{exa:3.2}.
\end{enumerate}
\end{theorem}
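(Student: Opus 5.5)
We work with the standard model of $\mathrm{Sol}_1^4$ as a solvable Lie group: the semidirect product $\mathrm{Nil}^3\rtimes\mathbb R$ with a left-invariant metric, where the $\mathbb R$-factor acts on the Heisenberg algebra by a derivation with eigenvalues $1,-1,0$. Since $\dim \operatorname{Iso}(\mathrm{Sol}_1^4)=4$, the identity component $\operatorname{Iso}_o(\mathrm{Sol}_1^4)$ is the group $\mathrm{Sol}_1^4$ itself, acting simply transitively by left translations. So a homogeneous hypersurface is an orbit $H\cdot p$ of a connected closed subgroup $H$ acting with cohomogeneity one. Principal orbits of such an action are three-dimensional and the isotropy is trivial. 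Hence $\dim H=3$, and after conjugating by left translations we may write $M=H\cdot p$ with $H$ a three-dimensional connected subgroup. The plan therefore has three steps: classify three-dimensional subalgebras $\mathfrak h\subset\mathfrak{sol}_1^4$ up to automorphisms induced by isometries, identify which give closed subgroups, and then sort the orbits up to isometry.

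\textbf{Step 1 (subalgebras).} Take a basis $e_1,e_2,e_3,e_4$ with
\[
[e_1,e_2]=e_3,\quad [e_4,e_1]=e_1,\quad [e_4,e_2]=-e_2,
\]
with $e_3$ central, in line with the coordinates $(x_0,x_1,x_2,x_3)$ of the statement. A codimension-one subalgebra $\mathfrak h$ is the kernel of a linear form $\alpha$. The subalgebra condition is that $\alpha([\mathfrak h,\mathfrak h])=0$, which is the same as $d\alpha|_{\mathfrak h}=0$, i.e. $\alpha\wedge d\alpha=0$ for the left-invariant form $\alpha$. Write $\alpha=\sum a_i e^i$ and compute $d\alpha$ from the structure constants. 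Solving $\alpha\wedge d\alpha=0$ should leave essentially two families: either $\alpha$ is a multiple of $e^4$, giving the nilradical $\mathfrak n=\mathrm{span}\{e_1,e_2,e_3\}$, or $\mathfrak h$ contains $e_3$ together with $e_4+v$ and one of $e_1,e_2$. Modulo inner automorphisms $\mathrm{Ad}(g)$, which move $\mathfrak h$ to a conjugate subgroup and hence produce isometric orbits, I expect the second family to reduce to $\mathfrak h=\mathrm{span}\{e_2,e_3,e_4\}$ or its counterpart $\mathrm{span}\{e_1,e_3,e_4\}$. There is also an isometry exchanging the roles of $e_1$ and $e_2$, reversing $x_0$, which identifies these two. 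The resulting subgroups are $\{x_0=0\}$ and $\{x_3=0\}$ in the statement's coordinates.

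\textbf{Step 2 (orbits).} For $H=\{(0,x_1,x_2,x_3)\}$ the orbits are parametrized by a transversal curve. I would choose a unit-speed geodesic orthogonal to the orbits; this is the curve $(\tanh r,0,0,-\ln\cosh r)$. Its distance $r$ from the orbit through the origin distinguishes the orbits. The sign of $r$ is removed by a reflection isometry, so $r\ge0$. For the second group, conjugation or right translation by elements normalizing $H$ should show that all of its orbits are isometric to $M_{2,0}$; for instance, $e_4$-translations may not normalize $H$ but could map orbits to orbits. Note that since the group is not unimodular, right translations are not isometries, so I only conjugate by left translations. An orbit $H\cdot(g p)$ equals $g\,(g^{-1}Hg)\cdot p$, so each orbit is congruent to the orbit through $e$ of a conjugate subgroup. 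The real task is therefore classifying subalgebras up to $\mathrm{Ad}$ and the finite isometry group, together with one transversal parameter.

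\textbf{Step 3 (non-congruence).} To show that the $M_{1,r}$ are pairwise non-congruent and not congruent to $M_{2,0}$, I would compute an invariant such as the mean curvature or the angle function between the normal and a canonical left-invariant field (e.g. $e_4$). Isometries preserve such a field up to sign, since the isometry group is the group itself together with finitely many automorphisms.

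I expect the main obstacle to be Step 1: carrying out the full case analysis of $\alpha\wedge d\alpha=0$ and reducing the normal forms by $\mathrm{Ad}$. In particular, one has to show that every non-nilradical subalgebra is conjugate to $\{x_3=0\}$ and that its orbits collapse to the single one, $M_{2,0}$.
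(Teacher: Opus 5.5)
Your argument is correct, and it takes a genuinely different route from the paper.

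\textbf{The paper's route.} The paper never classifies subalgebras. It first shows that homogeneity forces the angle functions $a,b,c,d$ of $N$, taken with respect to the left-invariant frame, to be constant, since $\operatorname{Iso}_o$ consists of left translations. It then proves the stronger local Theorem~\ref{thm:1.1} for every hypersurface with constant angle functions. Symmetry of $A$ in the frame $T_1,T_2,T_3$ gives \eqref{kkk:4.2}--\eqref{kkk:4.4}, whose solutions are $a=c=0$ or $b=c=0$. The isometries $\phi_4$ and $\phi_1$ then normalize $N$, and a commuting frame is built and integrated explicitly.

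\textbf{How your Step 1 relates.} It is the same algebra in cleaner form. For constant angle functions, $g(AT_i,T_j)-g(AT_j,T_i)=g([T_i,T_j],N)$; for instance, \eqref{kkk:4.2} is literally $g([T_1,T_2],N)=0$. So those equations say exactly that $N^\perp$ is a subalgebra. Concretely, $de^1=e^1\wedge e^4$, $de^2=e^4\wedge e^2$, $de^3=-e^1\wedge e^2$ and $de^4=0$ give
\[
\alpha\wedge d\alpha=-a_3^2\,e^{123}-(2a_1a_2+a_3a_4)\,e^{124}-a_1a_3\,e^{134}+a_2a_3\,e^{234}.
\]
Hence $a_3=0=a_1a_2$, which is the paper's solution set. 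Moreover, $\mathrm{Ad}(\exp\lambda e_1)$ fixes $\mathrm{span}\{e_2,e_3\}$ and sends $e_4\mapsto e_4-\lambda e_1$ (similarly for $e_2$). So the normal forms reduce as you predicted, and your hedged Steps 1--2 go through.

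\textbf{What each approach buys.} Yours avoids the frame integration, exhibits the subgroups directly, and explains structurally why all level sets $\{t=r\}$ are congruent. The paper's yields a local classification of constant-angle hypersurfaces that are not assumed to be orbits.

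\textbf{Slips to fix.}
\begin{enumerate}
\item $\mathfrak{sol}_1^4$ is unimodular: every $\mathrm{ad}$ is traceless. Right translations fail to be isometries because $\mathrm{Ad}$ is not orthogonal, e.g.\ $\mathrm{Ad}(\exp se_4)e_1=e^{s}e_1$, not for lack of unimodularity.
\item The subgroup $\{t=0\}$ is the nilradical, hence normal. Left translation by $(0,0,0,r)$ therefore maps $M_{2,0}$ onto $\{t=r\}$, so there is nothing to hedge about normalizers.
\item Your closing sentence swaps the roles. The non-nilradical subalgebras are conjugate to $\mathrm{span}\{e_2,e_3,e_4\}$ (the subgroup $\{x=0\}$) or to its $\phi_4$-image, and their orbits form the genuine one-parameter family $M_{1,r}$. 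It is the nilradical whose orbits all collapse to $M_{2,0}$.
\item $\phi_4$ reverses $t$, not the first coordinate.
\end{enumerate}
Finally, your Step 3 (non-congruence) is not needed for the statement as given.
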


Before stating our next main result, we assume that $m$ and $n$ are positive integers for which the equation $X^3-m X^2+n X-1=0$ admits three distinct real roots. These roots can be written as $e^{\alpha}$, $e^{\beta}$ and $e^{\gamma}$ with $\alpha<\beta<\gamma$ and $\alpha+\beta+\gamma=0$.

\begin{theorem}\label{thm:1.4}
Let $M$ be a homogeneous hypersurface of $\mathrm{Sol}_{m,n}^4$. Then up to isometries of $\mathrm{Sol}_{m,n}^4$, one of the following five cases occurs:
\begin{enumerate}[\rm(1)]
 \item $M$ is $M_{3,r}$ for some $r\geq0$, which is an orbit through
  $(0,\tfrac{1}{\beta}\tanh(|\beta| r),0,-\tfrac{1}{\beta}\ln(\cosh(|\beta| r)))$ of the subgroup
 $\{(x_1,0,x_2,x_3)\in \mathrm{Sol}_{m,n(m\neq n)}^4\mid  x_{1},x_{2},x_{3}\in \mathbb R\}$, see Example \ref{exa:3.4};
\item $M$ is $M_{4,d}$ for some $0\leq d<1$, which is an orbit through the origin
  $(0,0,0,0)$ of the subgroup
 $\{(x_1,\tfrac{d}{\sqrt{1-d^2}}x_2,x_3,x_2)\in \mathrm{Sol}_{m,m}^4\mid  x_{1},x_{2},x_{3}\in \mathbb R\}$, see Example \ref{exa:3.8};
\item $M$ is $M_{5,r}$ for some $r\geq0$, which is an orbit through
  $(\tfrac{1}{\alpha}\tanh(-\alpha r),0,0,-\tfrac{1}{\alpha}\ln(\cosh(-\alpha r)))$ of the subgroup
 $\{(0,x_1,x_2,x_3)\in \mathrm{Sol}_{m,n}^4\mid  x_{1},x_{2},x_{3}\in \mathbb R\}$, see Example \ref{exa:3.3};
 \item $M$ is $M_{6,r}$ for some $r\geq0$, which is an orbit through
  $(0,0, \tfrac{1}{\gamma}\tanh(\gamma r),-\tfrac{1}{\gamma}\ln(\cosh(\gamma r)))$ of the subgroup
 $\{(x_1,x_2,0,x_3)\in \mathrm{Sol}_{m,n}^4\mid  x_1,x_2,x_3\in \mathbb R\}$, see Example \ref{exa:3.5};
\item $M$ is $M_{7,0}$,  which is an orbit through the origin
  $(0,0,0,0)$ of the subgroup
 $\{(x_1,x_2,x_3,0)\in \mathrm{Sol}_{m,n}^4\mid x_1,x_2,x_3\in \mathbb R\}$,
 see Example \ref{exa:3.6}.
\end{enumerate}
\end{theorem}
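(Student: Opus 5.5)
The plan is to reduce the classification to a classification of three-dimensional subalgebras of the Lie algebra of $\operatorname{Iso}_o(\mathrm{Sol}_{m,n}^4)$. Since the isometry group of $\mathrm{Sol}_{m,n}^4$ is four-dimensional, the identity component acts simply transitively. I expect $\operatorname{Iso}_o(\mathrm{Sol}_{m,n}^4)=\mathrm{Sol}_{m,n}^4$ acting by left translations; to be checked against the model in Section 2. Hence a connected closed subgroup $G$ has codimension-one orbits exactly when $\dim G=3$. Every orbit $G\cdot p=L_p(p^{-1}Gp)\cdot e$, so up to an isometry it is an orbit through the origin of some conjugate subgroup. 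Conversely, fixing a subgroup $H$ and moving the base point corresponds to the various parallel orbits. So I will:
\begin{enumerate}[(i)]
\item classify the three-dimensional subalgebras $\mathfrak h\subset\mathfrak{sol}_{m,n}$ up to the automorphisms induced by isometries;
\item for each, parametrize its orbit space (a line) by a base point at signed distance $r$ from a reference orbit;
\item identify which orbits are congruent.
\end{enumerate}

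For (i), write $\mathfrak{sol}_{m,n}=\mathbb R T\ltimes\mathbb R^3$ with $\operatorname{ad}T=\operatorname{diag}(\alpha,\beta,\gamma)$ on the basis $e_1,e_2,e_3$, and $\alpha<\beta<\gamma$ distinct. A three-dimensional subalgebra $\mathfrak h$ either equals the abelian ideal $\mathbb R^3$, giving $M_{7,0}$, or it projects onto $\mathbb R T$. In the latter case $\mathfrak h\cap\mathbb R^3$ is a two-dimensional $\operatorname{ad}(T+v)$-invariant subspace. Since $\operatorname{ad}(T+v)$ acts on $\mathbb R^3$ as $\operatorname{ad}T$, this subspace is a sum of two eigenlines: $\operatorname{span}\{e_2,e_3\}$, $\operatorname{span}\{e_1,e_3\}$ or $\operatorname{span}\{e_1,e_2\}$. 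Conjugating by $\exp$ of the missing eigenvector removes that component of $v$, so $\mathfrak h$ is conjugate to $\mathbb R T\oplus$(two eigenlines). These are the three subgroups appearing in $M_{5,r},M_{3,r},M_{6,r}$.

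When $m=n$ the eigenvalue $\beta=0$, so the $e_2$-component cannot be conjugated away. This gives the extra one-parameter family with $T$ replaced by $T+c\,e_2$, normalized as $c=d/\sqrt{1-d^2}$, $0\le d<1$. I will also need the discrete isometries (sign changes, and possibly $T\mapsto -T$ with reversal of the eigenvalues) to reduce $c\ge0$. I must check whether $\mathfrak h=\mathbb R(T+ce_2)\oplus\operatorname{span}\{e_1,e_3\}$ has all of its orbits congruent. Since $e_2$ is central in that case, translation along $e_2$ commutes with $\mathfrak h$ and maps orbits to orbits, so a single orbit (through the origin) suffices, matching $M_{4,d}$.

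Similarly, for $\mathbb R^3$ the transversal direction is $T$, and left translation by $\exp(tT)$ normalizes $\mathbb R^3$, so all orbits are congruent, giving $M_{7,0}$ only. For $\mathfrak h=\mathbb RT\oplus\operatorname{span}\{e_j,e_k\}$, the normalizer of $H$ in $\mathrm{Sol}_{m,n}^4$ is $H$ itself, so distinct orbits are generically non-congruent. I will compute the orbit through the point at distance $r$ along the unit normal geodesic in the $e_i$-direction starting at the origin. In coordinates this geodesic is the curve $(\tfrac1{\lambda}\tanh(|\lambda|r),\dots,-\tfrac1\lambda\ln\cosh(|\lambda| r))$ with $\lambda$ the relevant eigenvalue, obtained by solving the geodesic equations in the plane spanned by $e_i$ and $T$, which is a hyperbolic plane of curvature $-\lambda^2$. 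The reflection $x_i\mapsto -x_i$ allows $r\ge0$.

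The main obstacle is showing that these families are pairwise non-congruent and that different $r$ (or $d$) give non-congruent hypersurfaces. Relatedly, I must also pin down exactly which discrete isometries exist in $\operatorname{Iso}(\mathrm{Sol}_{m,n}^4)$. For this I plan to compute an isometry invariant along each orbit, say the principal curvatures or the mean curvature, together with the angle function $\langle \xi, T\rangle$ between the unit normal and the $T$-direction. I expect these to be nonconstant functions of $r$ (e.g.\ $\tanh(|\lambda|r)$) that distinguish the parameters. They should also distinguish the types, via the eigenvalue $\lambda$ appearing in the shape operator and via $\langle\xi,T\rangle$: this is $0$ for $M_{7,0}$-type orbits, and it varies for the others.
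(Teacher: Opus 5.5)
Your argument is correct for the statement as posed, but it takes a different route from the paper. The paper never classifies subalgebras. It first observes that $\operatorname{Iso}_o$ consists of left translations, which preserve the frame $E_i$, so a homogeneous hypersurface has constant angle functions. It then writes $A$ in the frame $T_1,T_2,T_3$ and imposes symmetry, which gives the polynomial system \eqref{ttt:4.2}--\eqref{ttt:4.4}. Solving it yields $N\in\{bE_2+dE_4,\,aE_1+dE_4,\,cE_3+dE_4,\,E_4\}$, and integrating explicit commuting frames in each case recovers the examples (Theorem~\ref{thm:1.3}).

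The two routes are closely linked. Since $g(AX,Y)-g(X,AY)=g([X,Y],N)$ for left-invariant $X,Y\perp N$, that polynomial system is exactly the condition that $N^{\perp}$ be a subalgebra. You handle this in two lines via the invariant subspaces of $\operatorname{ad}T=\operatorname{diag}(\alpha,\beta,\gamma)$.

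What your route buys: it is shorter, and it explains structurally why $M_{4,d}$ appears exactly when $\beta=0$ (the $e_2$-component cannot be conjugated away). It also explains why $M_{4,d}$ and $M_{7,0}$ have all orbits congruent: central, respectively normalizing, translations map orbits to orbits. What the paper's route buys: the stronger local Theorem~\ref{thm:1.3}, which covers any hypersurface with constant angle functions, not necessarily complete or homogeneous. It also needs no facts about orbit spaces of cohomogeneity-one actions.

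A few corrections to your last two paragraphs, none of which affects the proof of the statement.

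First, non-congruence is not part of Theorem~\ref{thm:1.4}. The theorem only asserts that every homogeneous hypersurface is congruent to some member of the list, so that step is not an obstacle at all. Moreover, pairwise non-congruence of the families is actually false for $m=n$: $\varphi_3$ followed by $\varphi_2$ ($z\mapsto -z$) maps $H_5$ onto $H_6$ and $M_{5,r}$ onto $M_{6,r}$.

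Second, on $M_{7,0}$ the unit normal is $E_4$, so $\langle\xi,T\rangle=\pm1$, not $0$.

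Third, the claim that $H$ is self-normalizing fails for $\mathbb{R}T\oplus\operatorname{span}\{e_1,e_3\}$ when $\beta=0$, since $e_2$ is then central. You already treat that case separately; it is $M_{4,0}$.
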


\begin{theorem}\label{thm:1.6}
Let $M$ be a homogeneous hypersurface of $\mathrm{Nil}^4$. Then up to isometries of $\mathrm{Nil}^4$, one of the following two cases occurs:
\begin{enumerate}[\rm(1)]
\item $M$ is $M_{8,d}$ for some $0\leq d<1$, which is an orbit through the origin
  $(0,0,0,0)$ of the subgroup
 $\{(x_1,x_2,\tfrac{d}{\sqrt{1-d^2}}x_3,x_3)\in \mathrm{Nil}^4\mid  x_{1},x_{2},x_{3}\in \mathbb R\}$, see Example \ref{exa:3.11};
\item $M$ is $M_{9,0}$, which is an orbit through the origin
  $(0,0,0,0)$ of the subgroup
 $\{(x_1,x_2,x_3,0)\in \mathrm{Nil}^4\mid  x_1,x_2,x_3\in \mathbb R\}$, see Example \ref{exa:3.12}.
\end{enumerate}
\end{theorem}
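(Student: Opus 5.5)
The statement to prove is Theorem~\ref{thm:1.6}, the classification for $\mathrm{Nil}^4$. Since the isometry group of $\mathrm{Nil}^4$ is four-dimensional, its identity component is $\mathrm{Nil}^4$ itself, acting by left translations; possibly there is also a finite isotropy part, which we use only to normalize. The plan has four steps.

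\textbf{Step 1: reduce to subalgebras.} A homogeneous hypersurface is an orbit of a closed connected subgroup $G\subset \operatorname{Iso}_o(\mathrm{Nil}^4)=\mathrm{Nil}^4$ with cohomogeneity one. Because $\mathrm{Nil}^4$ acts simply transitively, $G$ acts freely, so $\dim G=3$. Every orbit $G\cdot p = L_p(\,(p^{-1}Gp)\cdot o\,)$ is congruent to the orbit through the origin of a conjugate subgroup. So it suffices to classify three-dimensional subalgebras $\mathfrak h$ of the Lie algebra $\mathfrak n$ of $\mathrm{Nil}^4$, up to conjugation by $\mathrm{Ad}$ and the automorphisms induced by isometries fixing $o$. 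Two subgroups give isometric orbits if the resulting hypersurfaces are congruent.

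\textbf{Step 2: describe the algebra.} Take the standard orthonormal left-invariant frame $E_1,\dots,E_4$ of the filiform algebra, with $[E_4,E_1]=E_2$, $[E_4,E_2]=E_3$ and all other brackets zero, adjusted to the paper's coordinates. Any codimension-one subalgebra contains $[\mathfrak n,\mathfrak n]$; in fact every hyperplane is an ideal containing $\mathfrak n^2=\mathrm{span}\{E_2,E_3\}$. Hence $\mathfrak h=\mathfrak h_\xi$ is determined by a unit normal $\xi\in\mathrm{span}\{E_1,E_4\}$, say $\xi=\cos\theta\,E_1+\sin\theta\,E_4$.

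\textbf{Step 3: normalize $\theta$.} The parameter $\theta$ is then normalized using:
\begin{itemize}
\item the isometries fixing the origin, which act by sign changes of the frame vectors;
\item the induced action on $\mathfrak n/\mathfrak n^2$.
\end{itemize}
Inner automorphisms act trivially on $\mathfrak n/\mathfrak n^2$, so they do not change $\theta$. The sign changes reduce to $\theta\in[0,\pi/2]$. The case $\xi=E_4$ gives the subgroup $\{x_4=0\}$, namely $M_{9,0}$. The remaining cases are parametrized by $d=\tan$ of the appropriate angle, or equivalently the coefficient $d/\sqrt{1-d^2}$ in the subgroup $\{(x_1,x_2,\tfrac{d}{\sqrt{1-d^2}}x_3,x_3)\}$, with $0\le d<1$; this gives $M_{8,d}$. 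I would then match the subgroup descriptions in Examples~\ref{exa:3.11} and~\ref{exa:3.12} with these subalgebras, by exponentiating in the paper's coordinates.

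\textbf{Step 4: show the families are non-congruent.} We must show that distinct $d$ give non-congruent hypersurfaces, and that $M_{9,0}$ is different from all $M_{8,d}$. For this I would compute an invariant, for instance the angle function $\langle N,E_4\rangle$ or $\langle N,E_1\rangle$, which is constant along a homogeneous hypersurface. Here $N$ is the left-invariant unit normal $\xi$. Alternatively one can use the principal curvatures computed from the Koszul formula for left-invariant fields. These values depend monotonically on $d$.

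\textbf{Main obstacle.} I expect the main difficulty to be pinning down the full isometry group, in particular its isotropy at the origin, since it determines exactly which normalizations of $\theta$ are allowed. Without the isotropy one cannot rule out, for example, an isometry identifying $\theta$ with $\pi-\theta$. I would determine the isotropy by computing which orthogonal maps of $\mathfrak n$ are Lie algebra automorphisms; for a completely solvable metric algebra these give the isotropy. The result should be the group of diagonal sign matrices compatible with the brackets. This also confirms that $d$ ranges only over $[0,1)$ and is a genuine invariant.
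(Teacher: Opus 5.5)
Your argument is correct, but it takes a different route from the paper.

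\textbf{Your route versus the paper's.} You work entirely at the Lie algebra level. Every three-dimensional subalgebra of $\mathfrak n$ contains $[\mathfrak n,\mathfrak n]$, and so is an ideal. In the paper's labeling (your $E_1,E_3$ are the paper's $E_3,E_1$) this derived algebra is $\mathrm{span}\{E_1,E_2\}$. Hence the connected subgroup $H$ is normal, and every orbit $Hp=pH$ is a left translate of $H$. The only remaining parameter is the normal line $[c:d]$ in $\mathrm{span}\{E_3,E_4\}$, which $\psi_1$ from \eqref{zx:1.4} folds onto $d\in[0,1]$.

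The paper never classifies subalgebras. It first shows that homogeneity forces constant angle functions. It then classifies all, possibly non-complete, hypersurfaces with constant angle functions (Theorem \ref{thm:1.5}). It does this by writing $A$ in the frame $T_i$, extracting \eqref{hhh:4.2}--\eqref{hhh:4.4} from the symmetry of $A$, and killing $a,b$ with a discriminant argument. Finally it integrates commuting coordinate fields to an explicit parametrization.

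The two routes meet in substance. For left-invariant $X,Y\perp N$ one has $g(AX,Y)-g(X,AY)=g([X,Y],N)$. So the paper's symmetry equations are exactly your subalgebra condition, and its integration step re-derives that the integral manifolds are cosets.

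Your route is shorter and yields complete orbits directly. It also explains structurally, via normality, why all orbits of one subgroup are congruent. The paper's route proves the stronger local classification of constant-angle hypersurfaces and computes the shape operator along the way. It also runs uniformly for $\mathrm{Sol}_1^4$ and $\mathrm{Sol}_{m,n}^4$, where codimension-one subalgebras need not be ideals. That is why $M_{1,r}$, $M_{3,r}$, $M_{5,r}$, $M_{6,r}$ are families of non-congruent orbits of a single subgroup.

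\textbf{Small corrections.}
\begin{enumerate}
\item The sentence ``every hyperplane is an ideal containing $\mathfrak n^2$'' is false as written. In the paper's labeling, $\mathrm{span}\{E_1,E_3,E_4\}$ is not even a subalgebra, since $[E_4,E_3]=E_2$. What you need is that every codimension-one subalgebra is an ideal containing $\mathfrak n^2$, and this needs a justification. One option is the normalizer condition for nilpotent algebras. Another is direct: $\mathrm{ad}_{E_4}$ acts on the abelian ideal $\mathrm{span}\{E_1,E_2,E_3\}$ as a single nilpotent Jordan block, whose only invariant plane is $\mathrm{span}\{E_1,E_2\}$.
\item With $\xi=\cos\theta\,E_1+\sin\theta\,E_4$ you get $d=\sin\theta$. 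It is $d/\sqrt{1-d^2}$, not $d$, that equals $\tan\theta$.
\item Step 4 and your ``main obstacle'' are not needed. The theorem only asserts that each homogeneous hypersurface is congruent to one on the list. The normalization uses only the existence of $\psi_1$, which realizes $\theta\mapsto\pi-\theta$; you use this identification rather than needing to rule it out.
\item If you want irredundancy anyway, the principal curvatures $0,\pm\tfrac{\sqrt{1+d^2}}{2}$ separate all cases without any knowledge of the isotropy (take $d=1$ for $M_{9,0}$). Also, ``orthogonal automorphisms give the isotropy'' is Wolf's theorem for nilpotent groups. It fails for general completely solvable groups, for example real hyperbolic space.
\end{enumerate}
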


\begin{remark}\label{rm:1.3}
The isometry group of each of the ambient spaces $\mathrm{Sol}_1^4$, $\mathrm{Sol}_{m,n}^4$ and $\mathrm{Nil}^4$ is four-dimensional and explicitly known. Every homogeneous hypersurface in these spaces can be regarded as the orbit of some three-dimensional subgroup of the isometry group. Classifying homogeneous hypersurfaces is equivalent to classifying the three-dimensional subgroups of the isometry group. Therefore, in this paper we only need to consider the three left invariant metrics defined by the equations \eqref{zx6.1}, \eqref{zx6.2} and \eqref{zx6.3} such that the corresponding isometry groups of $\mathrm{Sol}_1^4$, $\mathrm{Sol}_{m,n}^4$ and $\mathrm{Nil}^4$ are four-dimensional, respectively.
\end{remark}

\begin{remark}\label{rm:1.1}
A hypersurface is called {\it austere} if its multiset of principal curvatures is invariant under change of sign.  This notion was introduced by Harvey-Lawson \cite{HL}  for the construction of special Lagrangian submanifolds in $\mathbb C^n$.  Consequently, every austere hypersurface is automatically minimal. Regarding the metrics considered in this paper defined by \eqref{zx6.1}, \eqref{zx6.2} and \eqref{zx6.3}, all hypersurfaces in the family of examples characterized here are minimal. Among these, the following hypersurfaces are austere: $M_{1,0}$, $M_{2,0}$, $M_{3,0}$, $\{M_{4,d}, 0\leq d<1\}$, $M_{5,0}$ of $\mathrm{Sol}_{m,n}^4$, $\{M_{5,r}, r>0\}$ of $\mathrm{Sol}_{m,m}^4$, $M_{6,0}$ of $\mathrm{Sol}_{m,n}^4$, $\{M_{6,r}, r>0\}$ of $\mathrm{Sol}_{m,m}^4$, $M_{7,0}$ of $\mathrm{Sol}_{m,m}^4$, $\{M_{8,d}, 0\leq d<1\}$ and $M_{9,0}$.
\end{remark}

\begin{remark}\label{rm:1.2}
All homogeneous hypersurfaces characterized here have no focal manifolds.
For $i\in \{2,7,9\}$, all parallel hypersurfaces of $M_{i,0}$ are congruent to $M_{i,0}$.
In contrast, for $j\in \{1,3,5,6\}$, any two distinct hypersurfaces in $\{M_{j,r}, r\geq0\}$
are non-congruent.
For any fixed $d$, the parallel hypersurfaces of $M_{4,d}$ (resp. $M_{8,d}$) are congruent to
$M_{4,d}$ (resp. $M_{8,d}$).
However, for $d_1\neq d_2$, $M_{4,d_1}$ and $M_{4,d_2}$ (resp. $M_{8,d_1}$ and $M_{8,d_2}$) are non-congruent.
Besides, the hypersurfaces $M_{7,0}$ and $M_{9,0}$ are flat.
\end{remark}

\begin{remark}[Added on June 29, 2026]\label{rm:1.6}
After our manuscript had been submitted for publication on June 3, 2026, we became aware of a recent arXiv preprint, ``Homogeneous Hypersurfaces in 4-dimensional Thurston Geometries with 4-dimensional Isometry Group", by Tarcios Andrey Ferreira (arXiv:2606.28206). Ferreira's work classifies, up to conjugacy, the 3-dimensional subalgebras of the Lie algebras associated with the 4-dimensional Thurston geometries whose isometry groups have dimension 4. Our work is self-contained and was carried out entirely independently. Although the two works share the same topic, our approach is substantially different from that of Ferreira. We post our paper so that our independent contribution is available to the community.
\end{remark}

This paper is organized as follows: In Sect. 2, we review and
collect the basic materials of ambient spaces $\mathrm{Sol}_1^4$, $\mathrm{Sol}_{m,n}^4$, $\mathrm{Nil}^4$ and their hypersurfaces. In Sect. 3, we describe in some details the examples of homogeneous hypersurfaces, which appear in Theorems \ref{thm:1.2}--\ref{thm:1.6}. Finally, in each of the subsequent three sections, we prove Theorems \ref{thm:1.2}, \ref{thm:1.4} and \ref{thm:1.6}, respectively.

\begin{acknow}
		Z. Yao was supported by National Natural Science Foundation of China (Grant No. 12401061).
		X. Zhang was supported by the China Postdoctoral Science Foundation (Grant No. 2025M773115).
	\end{acknow}

\section{Preliminaries}\label{sect:2}
For convenience, we use the same symbol $g$ to denote the Riemannian metric on each of the three ambient spaces $\mathrm{Sol}_1^4$, $\mathrm{Sol}_{m,n}^4$ and $\mathrm{Nil}^4$.
The specific one should be seen clearly from context.

\subsection{The geometric structure on $\mathrm{Sol}_1^4$}\label{sect:2.1}~

In this subsection, we review some basic materials about $\mathrm{Sol}_1^4$ from \cite{EI,EI1}.
The underlying manifold of the model space $\mathrm{Sol}_1^4$ is the following solvable Lie group
\begin{equation*}
\left\{ (x, y, z, t) :=
\begin{pmatrix}
1 & 0 & e^{-t}x & z \\
0 & e^t & 0 & x \\
0 & 0 & e^{-t} & y \\
0 & 0 & 0 & 1
\end{pmatrix} : x, y, z, t \in \mathbb{R} \right\}.
\end{equation*}
The group multiplication is given by
\begin{equation}\label{kkk:2.0}
(x_1, y_1, z_1, t_1)(x_2, y_2, z_2, t_2)=(x_1 + e^{t_1}x_2, y_1 + e^{-t_1}y_2,
z_1 + z_2 + e^{-t_1}x_1y_2, t_1 + t_2).
\end{equation}

At a point $p = (x, y, z, t) \in \mathrm{Sol}_1^4$, there are four left invariant vector fields:
\begin{equation}\label{kkk:2.1}
E_1=e^t\partial_x,\ \ E_2=e^{-t}(\partial_y+x\partial_z),\ \ E_3=\partial_z,\ \ E_4=\partial_t.
\end{equation}
Then one can check the following brackets:
\begin{equation}\label{kkk:2.2}
\begin{aligned}
&[E_1, E_2]=E_3, &  [E_1, E_3]&=0, & [E_1, E_4]&=-E_1, \\
&[E_2, E_3]=0, & [E_2, E_4]&=E_2, & [E_3, E_4]&=0.
\end{aligned}
\end{equation}

The group that acts on $\mathrm{Sol}_1^4$ making it a Thurston geometry is $\mathrm{Sol}_1^4\rtimes D_4$ (see \cite{EI1}). Here, $\mathrm{Sol}_1^4$ acts on itself by left translations, and $D_4$ denotes the dihedral group of 8 elements and is generated by the maps
$\phi_i: \mathrm{Sol}_1^4\rightarrow\mathrm{Sol}_1^4$ for $1\leq i\leq4$, defined by
\begin{equation}\label{fff:2.4}
\begin{aligned}
&\phi_1(x,y,z,t)=(-x,y,-z,t), \ \ \phi_2(x,y,z,t)=(-x,-y,z,t),\\
&\phi_3(x,y,z,t)=(x,-y,-z,t),\ \ \phi_4(x,y,z,t)=(y,x,-z+xy,-t).
\end{aligned}
\end{equation}

In this paper, we select the Riemannian metric $g$ on $\mathrm{Sol}_1^4$ as defined in reference \cite{EI}:
\begin{equation}\label{zx6.1}
g=e^{-2t}dx^2 + e^{2t}dy^2 + (dz-x dy)^2 + dt^2.
\end{equation}
Then the frame field $\{E_i\}^4_{i=1}$ is orthonormal with respect to $g$.

Let $\tilde{\nabla}$ be the Levi-Civita connection of $g$. Applying the equation \eqref{kkk:2.2} and the Koszul's formula, we derive
\begin{equation}\label{kkk:2.3}
\begin{aligned}
\tilde{\nabla}_{E_1}E_1 &= E_4,      & \tilde{\nabla}_{E_1}E_2 &= \tfrac{1}{2} E_3,       & \tilde{\nabla}_{E_1}E_3 &= -\tfrac{1}{2} E_2,       & \tilde{\nabla}_{E_1}E_4 &= -E_1, \\
\tilde{\nabla}_{E_2}E_1 &= -\tfrac{1}{2}E_3,       & \tilde{\nabla}_{E_2}E_2 &=-E_4,     & \tilde{\nabla}_{E_2}E_3 &= \tfrac{1}{2}E_1,       & \tilde{\nabla}_{E_2}E_4 &= E_2, \\
\tilde{\nabla}_{E_3}E_1 &= -\tfrac{1}{2}E_2,       & \tilde{\nabla}_{E_3}E_2 &=\tfrac{1}{2}E_1,       & \tilde{\nabla}_{E_3}E_3 &=0,   & \tilde{\nabla}_{E_3}E_4 &=0,  \\
\tilde{\nabla}_{E_4}E_1 &= 0,       & \tilde{\nabla}_{E_4}E_2 &= 0,       & \tilde{\nabla}_{E_4}E_3 &= 0,       & \tilde{\nabla}_{E_4}E_4 &= 0.
\end{aligned}
\end{equation}

\subsection{The geometric structure on $\mathrm{Sol}_{m,n}^4$}\label{sect:2.2}~

In this subsection, we collect some necessary materials about $\mathrm{Sol}_{m,n}^4$ from \cite{BM,DM1}.
The underlying manifold of $\mathrm{Sol}_{m,n}^4$ is a solvable Lie group consisting of the matrices
$$
\begin{pmatrix}
e^{\alpha t} & 0 & 0& x \\
0 & e^{\beta t} & 0 & y \\
0 & 0 & e^{\gamma t} &z \\
0 & 0 & 0 & 1
\end{pmatrix},
$$
where $x,y,z,t,\alpha,\beta,\gamma\in \mathbb R$ and $\alpha<\beta<\gamma$ satisfying $\alpha+\beta+\gamma=0$. The values $e^{\alpha}$, $e^{\beta}$ and $e^{\gamma}$ are the roots of the equation $X^3-m X^2+n X-1=0$. Here,  $m$ and $n$ are positive integers such that the roots are all real and distinct.
The group operation is given explicitly by
\begin{equation}\label{ttt:2.0}
(x_1, y_1, z_1, t_1)(x_2, y_2, z_2, t_2) = (x_1 + e^{\alpha t_1}x_2, y_1 + e^{\beta t_1}y_2, z_1 + e^{\gamma t_1}z_2 , t_1 + t_2).
\end{equation}

At a point $p = (x, y, z, t) \in \mathrm{Sol}_{m,n}^4$, the left invariant vector fields are given by
\begin{align}\label{ttt:2.1}
E_1=e^{\alpha t}\partial_x,\ \ E_2=e^{\beta t}\partial_y,\ \ E_3=e^{\gamma t}\partial_z,\ \ E_4=\partial_t.
\end{align}
Then we have the following commutation relations:
\begin{equation}\label{ttt:2.2}
\begin{aligned}
&[E_1, E_2]=0, &  [E_1, E_3]&=0, & [E_1, E_4]&=-\alpha E_1, \\
&[E_2, E_3]=0, & [E_2, E_4]&=- \beta E_2, & [E_3, E_4]&=-\gamma E_3.
\end{aligned}
\end{equation}

When $m\neq n$, the group associated to this Thurston geometry is $\mathrm{Sol}_{m,n(m\neq n)}^4 \rtimes (\mathbb{Z}/2\mathbb{Z})^3$, where $\mathrm{Sol}_{m,n(m\neq n)}^4$ acts on itself by left translations, and each $\mathbb{Z}/2\mathbb{Z}$ factor is generated by reflecting either the $x$-, $y$- or $z$-coordinate (see \cite{DM1}). When $m=n$, we have $\beta=0$, $\alpha=-\gamma$, and $\mathrm{Sol}_{m,m}^4$
can be identified with $\mathrm{Sol}^3 \times \mathbb R$.
The associated group is $\mathrm{Sol}_{m,m}^4 \rtimes (D_4 \times (\mathbb{Z}/2\mathbb{Z}))$, where $\mathbb{Z}/2\mathbb{Z}$ is generated by reflecting the
$y$-coordinate, and $D_4$ is generated by the maps
$\varphi_i:\mathrm{Sol}_{m,m}^4\rightarrow\mathrm{Sol}_{m,m}^4$ for $1\leq i\leq3$, defined by
$$
\varphi_1(x,y,z,t)=(-x,y,z,t), \ \ \varphi_2(x,y,z,t)=(x,y,-z,t), \ \
\varphi_3(x,y,z,t)=(z,y,x,-t).
$$

We choose the Riemannian metric $g$ on $\mathrm{Sol}_{m,n}^4$ as defined in reference \cite{BM}:
\begin{equation}\label{zx6.2}
g=e^{-2\alpha t}dx^2 + e^{-2\beta t}dy^2 + e^{-2\gamma t} dz^2 + dt^2.
\end{equation}
Then the frame field $\{E_i\}^4_{i=1}$ is orthonormal with respect to $g$.

Let $\tilde{\nabla}$ be the Levi-Civita connection of $g$. By using \eqref{ttt:2.2} and Koszul's formula, we obtain
\begin{equation}\label{ttt:2.3}
\begin{aligned}
\tilde{\nabla}_{E_1}E_1 &=\alpha E_4,      & \tilde{\nabla}_{E_1}E_2 &= 0,
& \tilde{\nabla}_{E_1}E_3 &=0,       & \tilde{\nabla}_{E_1}E_4 &= -\alpha E_1, \\
\tilde{\nabla}_{E_2}E_1 &=0,       & \tilde{\nabla}_{E_2}E_2 &=\beta E_4,
& \tilde{\nabla}_{E_2}E_3 &=0,       & \tilde{\nabla}_{E_2}E_4 &=-\beta E_2, \\
\tilde{\nabla}_{E_3}E_1 &=0,       & \tilde{\nabla}_{E_3}E_2 &=0,
& \tilde{\nabla}_{E_3}E_3 &=\gamma E_4,   & \tilde{\nabla}_{E_3}E_4 &=-\gamma E_3,  \\
\tilde{\nabla}_{E_4}E_1 &= 0,       & \tilde{\nabla}_{E_4}E_2 &= 0,       & \tilde{\nabla}_{E_4}E_3 &= 0,       & \tilde{\nabla}_{E_4}E_4 &= 0.
\end{aligned}
\end{equation}

\subsection{The geometric structure on $\mathrm{Nil}^4$}\label{sect:2.3}~

In this subsection, we review some basic materials about $\mathrm{Nil}^4$ from \cite{DM1,DHCB}.
The underlying manifold of $\mathrm{Nil}^4$ is the following solvable Lie group:
$$
\left\{ (x, y, z, t) :=
\begin{pmatrix}
1 & t & \frac{t^2}{2}& x \\
0 & 1 & t & y \\
0 & 0 &1 &z \\
0 & 0 & 0 & 1
\end{pmatrix} : x, y, z, t \in \mathbb{R} \right\}.
$$
The group multiplication is
\begin{equation}\label{hhh:2.0}
(x_1, y_1, z_1, t_1)(x_2, y_2, z_2, t_2)
=(x_1+x_2+t_1y_2+\tfrac{t_1^2}{2}z_2, y_1+y_2+t_1z_2, z_1+z_2, t_1+t_2).
\end{equation}

At a point $p = (x, y, z, t) \in \mathrm{Nil}^4$, the left invariant vector fields are given by
\begin{align}\label{hhh:2.1}
E_1=\partial_x,\ \ E_2=t\partial_x+\partial_y,\ \ E_3=\tfrac{t^2}{2}\partial_x+t \partial_y +\partial_z,\ \ E_4=\partial_t.
\end{align}
Then we obtain the following equations:
\begin{equation}\label{hhh:2.2}
\begin{aligned}
&[E_1, E_2]=0, &  [E_1, E_3]&=0, & [E_1, E_4]&=0, \\
&[E_2, E_3]=0, &  [E_2, E_4]&=- E_1, &
[E_3, E_4]&=- E_2.
\end{aligned}
\end{equation}

According to \cite{DM1}, the group associated to $\mathrm{\mathrm{Nil}}^4$ is $\mathrm{Nil}^4 \rtimes (\mathbb{Z}/2\mathbb{Z})^2$, where $\mathrm{Nil}^4$ acts on itself by left translations, and the copies of $\mathbb{Z}/2\mathbb{Z}$ is generated by the maps $\psi_i:\mathrm{Nil}^4\rightarrow\mathrm{Nil}^4$ for $1\leq i\leq2$, defined by
\begin{equation}\label{zx:1.4}
\psi_1(x,y,z,t)=(-x,-y,-z,t), \ \ \psi_2=(x,y,z,t)=(x,-y,z,-t).
\end{equation}

We select the Riemannian metric $g$ on $\mathrm{Nil}^4$ as defined in reference \cite{DHCB}:
\begin{equation}\label{zx6.3}
g=dx^2 -2t\, dx dy+t^2\, dx dz+(1+t^2)dy^2-2t(1+\tfrac{t^2}{2})dy dz+(1+t^2+\tfrac{t^4}{4})dz^2 + dt^2.
\end{equation}
Then the frame field $\{E_i\}^4_{i=1}$ is orthonormal with respect to $g$.

Let $\tilde{\nabla}$ be the Levi-Civita connection of $g$. By using \eqref{hhh:2.2} and Koszul's formula, we obtain
\begin{equation}\label{hhh:2.3}
\begin{aligned}
\tilde{\nabla}_{E_1}E_1 &=0,      & \tilde{\nabla}_{E_1}E_2 &=\tfrac{1}{2}E_4,
& \tilde{\nabla}_{E_1}E_3 &=0,       & \tilde{\nabla}_{E_1}E_4 &=-\tfrac{1}{2}E_2, \\
\tilde{\nabla}_{E_2}E_1 &=\tfrac{1}{2}E_4,       & \tilde{\nabla}_{E_2}E_2 &=0,
& \tilde{\nabla}_{E_2}E_3 &=\tfrac{1}{2}E_4,       & \tilde{\nabla}_{E_2}E_4 &=
-\tfrac{1}{2}E_1-\tfrac{1}{2}E_3, \\
\tilde{\nabla}_{E_3}E_1 &=0,       & \tilde{\nabla}_{E_3}E_2 &=\tfrac{1}{2}E_4,
& \tilde{\nabla}_{E_3}E_3 &=0,   & \tilde{\nabla}_{E_3}E_4 &=-\tfrac{1}{2}E_2,  \\
\tilde{\nabla}_{E_4}E_1 &= -\tfrac{1}{2}E_2,       & \tilde{\nabla}_{E_4}E_2 &=\tfrac{1}{2}E_1-\tfrac{1}{2}E_3,       & \tilde{\nabla}_{E_4}E_3 &=\tfrac{1}{2}E_2,       & \tilde{\nabla}_{E_4}E_4 &= 0.
\end{aligned}
\end{equation}

\subsection{Hypersurfaces of $\mathrm{Sol}_1^4$, $\mathrm{Sol}_{m,n}^4$ and $\mathrm{Nil}^4$}\label{sect:2.4}~

In this subsection, we uniformly describe the basic theory of hypersurfaces in $\mathrm{Sol}_1^4$, $\mathrm{Sol}_{m,n}^4$ and $\mathrm{Nil}^4$.
Let $M$ be an isometrically immersed hypersurface of any of $\mathrm{Sol}_1^4$, $\mathrm{Sol}_{m,n}^4$ or $\mathrm{Nil}^4$ with unit normal vector field $N$. We can assume that
$$
N = aE_1 + bE_2 + cE_3 + dE_4,
$$
where $\{E_i\}_{i=1}^4$ is defined by \eqref{kkk:2.1}
(resp. \eqref{ttt:2.1}, resp. \eqref{hhh:2.1}), and $a,b,c,d$ are smooth functions on $M$,
called {\it angle functions} satisfying $a^2 + b^2 + c^2 + d^2 = 1$. Then the following tangent vector fields
are orthonormal on $M$:
\begin{equation}\label{kkk:2.5}
\begin{aligned}
T_1 = bE_1 - aE_2 + dE_3 - cE_4, \\
T_2 = cE_1 - dE_2 - aE_3 + bE_4, \\
T_3 = dE_1 + cE_2 - bE_3 - aE_4.
\end{aligned}
\end{equation}

Let $\nabla$ be the Levi-Civita connection of the induced metric $g$
on $M$. Then the Gauss and Weingarten formulae are given respectively as below:
\begin{equation}\label{kkk:2.6}
\tilde{\nabla}_XY=\nabla_XY+g(AX,Y) N,\ \
\tilde{\nabla}_XN=-AX,\ \hbox{for any}\  X,Y\in TM,
\end{equation}
where $A$ is the shape operator of $M$.

Let $\tilde{R}$ and $R$ be the Riemannian curvature tensors of the ambient space and $M$, respectively. Then, the Gauss and Codazzi equations are given by:
\begin{equation}\label{kkk:2.7}
\begin{aligned}
R(X,Y)Z&=[\tilde{R}(X,Y)Z]^\top+g(AY,Z)AX-g(AX,Z) AY,
\end{aligned}
\end{equation}
\begin{equation}\label{kkk:2.8}
(\nabla_XA)Y-(\nabla_YA)X=-[\tilde{R}(X,Y)N]^\top,
\end{equation}
where $X,Y,Z\in TM$ and $\cdot^\top$ denotes the tangential component.

\section{Examples of homogeneous hypersurfaces}\label{sect:3}
In this section, we construct examples of homogeneous hypersurfaces in $\mathrm{Sol}_1^4$, $\mathrm{Sol}_{m,n}^4$ and $\mathrm{Nil}^4$.

\subsection{Examples of homogeneous hypersurfaces of $\mathrm{Sol}_1^4$}\label{sect:3.1}~
\begin{example}\label{exa:3.1}
For any given $r\geq0$, we define the hypersurface
$$
M_{1,r}:=\{(e^{x_3}\tanh r,x_1,x_2,x_3-\ln(\cosh r))\in \mathrm{Sol}_1^4\mid x_{1},x_{2},x_{3}\in\mathbb{R}\}.
$$
Put $H_1:=\{(0,x_1,x_2,x_3)\in \mathrm{Sol}_1^4\mid  x_1,x_2,x_3\in\mathbb R\}\subset\operatorname{Iso}_o(\mathrm{Sol}_1^4)=\mathrm{Sol}_1^4$.
\end{example}
\begin{proposition}\label{prop:3.1}
The hypersurface $M_{1,r}$ has the following properties:
\begin{enumerate}[{\rm(1)}]
\item The hypersurface $M_{1,r}$ can also be presented as $\{(x,y,z,t)\in \mathrm{Sol}_1^4\mid x e^{-t}=\sinh r \}$. The unit normal vector field is given by $N=-\operatorname{sech} r\, E_1 + \tanh r\, E_4$, where $\{E_i\}^4_{i=1}$ is defined by \eqref{kkk:2.1};
\item It is a minimal hypersurface and has constant principal curvatures $\tfrac{-1-\tanh r}{2}$, $\tfrac{1-\tanh r}{2}$ and $\tanh r$. When $r=\tfrac{1}{2} \ln 2$, the hypersurface $M_{1,r}$ has two distinct
    constant principal curvatures $-\tfrac{2}{3}$ and $\tfrac{1}{3}$ (multiplicity 2). When $r\in[0,\frac{1}{2} \ln2)\cup(\frac{1}{2} \ln 2,+\infty)$, $M_{1,r}$ has three distinct
    constant principal curvatures.
    The eigenvalues of the Ricci tensor of $M_{1,r}$ are $\tfrac{\tanh^2 r}{2}$ and $-1+\tfrac{\tanh^2 r}{2}$ (multiplicity 2);
\item It is an orbit of the subgroup $H_1$ which passes through the point $(\tanh r,0,0,-\ln(\cosh r))$. Therefore, $M_{1,r}$ is a homogeneous hypersurface, and it has no focal manifold.
\end{enumerate}
\end{proposition}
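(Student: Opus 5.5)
The plan is to prove the three items in the order (1), (3), (2), since the homogeneity in (3) is what makes the curvature computations in (2) reduce to algebra with constant coefficients.

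For (1), substitute the parametrization of $M_{1,r}$ in Example \ref{exa:3.1}. Since $x=e^{x_3}\tanh r$ and $t=x_3-\ln(\cosh r)$, we get $xe^{-t}=\tanh r\cosh r=\sinh r$. Conversely, any point with $xe^{-t}=\sinh r$ is reached by taking $x_3=t+\ln\cosh r$, $x_1=y$, $x_2=z$. Hence $M_{1,r}$ is the regular level set $f=\sinh r$ of $f=xe^{-t}$. Using \eqref{kkk:2.1}, the components of the gradient are $E_1 f=1$, $E_2f=E_3f=0$ and $E_4 f=-xe^{-t}=-\sinh r$. So $\nabla f=E_1-\sinh r\,E_4$, which has norm $\cosh r$. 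Normalizing and choosing the sign gives $N=-\operatorname{sech} r\,E_1+\tanh r\,E_4$.

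For (3), first check that $H_1$ is a closed subgroup. Under \eqref{kkk:2.0}, the first coordinate of a product of two elements with $x=0$ is $0+e^{t_1}\cdot 0=0$, and $H_1$ is clearly closed. Next compute the left translation:
\[
(0,x_1,x_2,x_3)\cdot(\tanh r,0,0,-\ln\cosh r)=(e^{x_3}\tanh r,\,x_1,\,x_2,\,x_3-\ln\cosh r).
\]
This is exactly the parametrization of $M_{1,r}$, so $M_{1,r}=H_1\cdot p$ is homogeneous. The action of $H_1$ is free, since this map is injective. Every level set $\{xe^{-t}=c\}$, $c\in\mathbb R$, is then a principal orbit, and these orbits foliate $\mathrm{Sol}_1^4$ with no singular orbit. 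Therefore the parallel hypersurfaces of $M_{1,r}$ (the equidistant orbits) never degenerate, and there is no focal manifold. Values $c<0$ correspond to $M_{1,|r|}$ via $\phi_1$.

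For (2), the angle functions on $M_{1,r}$ are the constants $a=-\operatorname{sech} r$, $b=c=0$, $d=\tanh r$. By \eqref{kkk:2.5}, a tangent frame is given by $T_1=-aE_2+dE_3$, $T_2=-dE_2-aE_3$ and $T_3=dE_1-aE_4$. Using \eqref{kkk:2.3}, the shape operator $AX=-\tilde\nabla_XN$ is easy to compute:

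1. From $\tilde\nabla_{T_3}N=d(aE_4-dE_1)=-dT_3$ we get $AT_3=\tanh r\,T_3$.
2. On $\mathrm{span}\{E_2,E_3\}$ we get $AE_2=-dE_2+\tfrac a2E_3$ and $AE_3=\tfrac a2E_2$. This symmetric matrix has trace $-d$ and eigenvalues $\tfrac{-d\pm\sqrt{d^2+a^2}}2=\tfrac{-d\pm1}{2}$.

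The trace of $A$ is therefore $0$, so $M_{1,r}$ is minimal. The only possible coincidence among the three principal curvatures is $\tfrac{1-d}2=d$, i.e. $\tanh r=\tfrac13$, i.e. $r=\tfrac12\ln2$. The other coincidence, $\tfrac{-1-d}{2}=d$, would need $d<0$, which cannot happen for $r\ge0$. This gives $-\tfrac23$ and $\tfrac13$ with multiplicity two.

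For the Ricci tensor, use the Gauss equation \eqref{kkk:2.7} with $H=0$, which gives $\mathrm{Ric}(X)=\sum_i[\tilde R(X,T_i)T_i]^\top-A^2X$. The ambient curvature terms come from \eqref{kkk:2.3}. Since all coefficients are constant, only the components $\tilde R(E_i,E_j)E_k$ paired with the frame are needed.

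I expect this curvature bookkeeping to be the main obstacle, because $\tilde R$ of $\mathrm{Sol}_1^4$ has many nonzero components. As an independent check I would use the fact that $M_{1,r}$ is isometric to $H_1$ with a left-invariant metric, and compute its Ricci tensor from the Lie brackets of an orthonormal left-invariant frame. The expected outcome is eigenvalue $\tfrac{\tanh^2r}{2}$ once and $-1+\tfrac{\tanh^2 r}2$ twice.
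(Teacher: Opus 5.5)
Everything you actually carry out is correct and follows the paper closely. Your $T_3=\tanh r\,E_1+\operatorname{sech} r\,E_4$ is the paper's $W_1$, and your block $AE_2=-\tanh r\,E_2-\tfrac{\operatorname{sech} r}{2}E_3$, $AE_3=-\tfrac{\operatorname{sech} r}{2}E_2$ is its $AW_2,AW_3$. Your level-set argument in (1) is more explicit than the paper, and so is your observation in (3) that the $H_1$-orbits are exactly the (all principal) level sets of $xe^{-t}$, which follows from $(0,a,b,s)\cdot(x,y,z,t)=(e^{s}x,\,a+e^{-s}y,\,b+z,\,s+t)$.

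The genuine gap is the last claim of (2): you never compute the Ricci tensor. You describe two methods and then state the ``expected outcome'', which is just the assertion to be proved. Nothing in your plan would fail, but as written the Ricci eigenvalues are unproved.

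The paper closes this with what you call the independent check, and it is short. The fields $W_1=T_3$, $W_2=E_2$, $W_3=E_3$ are constant-coefficient combinations of left-invariant fields tangent to $M_{1,r}$. Hence their brackets on $M_{1,r}$ come straight from \eqref{kkk:2.2}:
\begin{gather*}
[W_1,W_2]=-\operatorname{sech} r\,W_2+\tanh r\,W_3,\qquad [W_1,W_3]=[W_2,W_3]=0,\\
D:=\mathrm{ad}_{W_1}\big|_{\mathrm{span}\{W_2,W_3\}}=\begin{pmatrix}-\operatorname{sech} r&0\\ \tanh r&0\end{pmatrix}.
\end{gather*}
Koszul's formula (equivalently, the tangential part of the Gauss formula, as in the paper) gives $K(W_1\wedge W_2)=-1+\tfrac{\tanh^2 r}{4}$ and $K(W_1\wedge W_3)=K(W_2\wedge W_3)=\tfrac{\tanh^2 r}{4}$. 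You also need the off-diagonal entries $\mathrm{Ric}(W_i,W_j)$, $i\neq j$, to vanish, which they do.

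All of this drops out at once from the standard formulas for the metric Lie algebra $\mathbb{R}W_1\ltimes_D\mathbb{R}^2$ with $\mathbb{R}^2$ abelian and $S=\tfrac12(D+D^{T})$:
\begin{gather*}
\mathrm{Ric}(W_1,W_1)=-\mathrm{tr}(S^2),\qquad \mathrm{Ric}(W_1,\mathbb{R}^2)=0,\\
\mathrm{Ric}|_{\mathbb{R}^2}=\tfrac12[D,D^{T}]-(\mathrm{tr}\,D)\,S.
\end{gather*}
These give $\mathrm{Ric}=\mathrm{diag}\big(-1+\tfrac{\tanh^2 r}{2},\,-1+\tfrac{\tanh^2 r}{2},\,\tfrac{\tanh^2 r}{2}\big)$.

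Your Gauss-equation route is also valid, but it is the longer way. It needs the full ambient curvature tensor, including mixed components such as $\tilde R(E_2,E_4)E_1=\tfrac12E_3$, which are what cancel the off-diagonal entry of $A^2$ on $\mathrm{span}\{E_2,E_3\}$.
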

\begin{proof}
(1) At any point  $p\in M_{1,r}$, we consider the frame field $\{V_i\}^3_{i=1}$ on $M_{1,r}$:
$$
V_1=p_{x_1}=e^{x_3}({\operatorname{sech} r\, E_2-\tanh r\, E_3}), \ \
V_2=p_{x_2}=E_3, \ \
V_3=p_{x_3}=\sinh r\, E_1+E_4.
$$
Thus $N=-\operatorname{sech} r\, E_1 + \tanh r\, E_4$ is a unit normal vector field of $M_{1,r}$.

(2) Choose the orthonormal frame field  $\{W_i\}^3_{i=1}$ on $M_{1,r}$ as follows:
\begin{equation}\label{lll:3.1}
W_1=\tanh r E_1+\operatorname{sech} r E_4, \ \ W_2=E_2,\ \
W_3=E_3.
\end{equation}
According to \eqref{kkk:2.3}, \eqref{lll:3.1} and the Gauss and Weingarten formulae, we get
\begin{equation}\label{lll:3.111}
\begin{aligned}
\nabla_{W_1}W_1 &=0,
&\nabla_{W_1}W_2 &= \tfrac{\tanh r}{2}W_3,
&\nabla_{W_1}W_3 &=-\tfrac{\tanh r}{2}W_2, \\
\nabla_{W_2}W_1 &=\operatorname{sech} r W_2-\tfrac{\tanh r}{2}W_3,
&\nabla_{W_2}W_2 &=-\operatorname{sech} r W_1,
&\nabla_{W_2}W_3 &=\tfrac{\tanh r}{2}W_1, \\
\nabla_{W_3}W_1 &=-\tfrac{\tanh r}{2}W_2,       & \nabla_{W_3}W_2&=\tfrac{\tanh r}{2}W_1,
& \nabla_{W_3}W_3&=0,
\end{aligned}
\end{equation}
\begin{equation}\label{lll:3.2}
AW_1=\tanh r\, W_1, \ AW_2=-\tanh r\, W_2-\tfrac{\operatorname{sech} r}{2}W_3,\
AW_3=-\tfrac{\operatorname{sech} r}{2} W_2.
\end{equation}
It follows from \eqref{lll:3.2} that the principal curvatures of $M_{1,r}$ are $\tfrac{-1-\tanh r}{2}$, $\tfrac{1-\tanh r}{2}$ and $\tanh r$. We know that $\tfrac{-1-\tanh r}{2}\neq \tfrac{1-\tanh r}{2}$. Since $r\geq0$, we also have $\frac{-1-\tanh r}{2}\neq \tanh r$.
If $\tfrac{1-\tanh r}{2}=\tanh r$, we get $\tanh r=\frac{1}{3}$ and then $r=\tfrac{1}{2} \ln 2$,
which implies $M_{1,r}$ has two distinct
constant principal curvatures $-\tfrac{2}{3}$ and $\tfrac{1}{3}$ (multiplicity 2).

By using \eqref{lll:3.111}, we can calculate the sectional curvature and Ricci curvature of $M_{1,r}$ through their definitions and obtain
$$
K(W_1\wedge W_2)=-1+\tfrac{\tanh^2 r}{4},\ \ K(W_1\wedge W_3)=K(W_2\wedge W_3)=\tfrac{\tanh^2 r}{4},
$$
$$
\mathrm{Ric}(W_1)=\big(-1+\tfrac{\tanh^2 r}{2}\big) W_1,\ \
\mathrm{Ric}(W_2)=\big(-1+\tfrac{\tanh^2 r}{2}\big)W_2,\ \
\mathrm{Ric}(W_3)=\tfrac{\tanh^2 r}{2} W_3.
$$

(3) It follows from \eqref{kkk:2.0} that $H_1$ is a closed subgroup of $\mathrm{Sol}_1^4$. Moreover, by \eqref{kkk:2.0} we further derive that the hypersurface $M_{1,r}$ is an orbit of the subgroup $H_1$ which passes through the point $(\tanh r,0,0,-\ln(\cosh r))$,  and it has no focal manifold.
\end{proof}

\begin{example}[cf. Example 7.1 of \cite{EI}]\label{exa:3.2}
For any given $r \in \mathbb{R}$, we consider the hypersurface $M_{2,r}$ defined by
$$
M_{2,r} := \{(x_1, x_2, x_3, r) \in \mathrm{Sol}_1^4 \mid x_1, x_2, x_3 \in \mathbb{R}\}.
$$
\end{example}
Let $H_2:= \{(x_1, x_2, x_3, 0) \in \mathrm{Sol}_1^4 \mid x_1, x_2, x_3 \in \mathbb{R}\}$. Then by \eqref{kkk:2.0}, we know that $H_2$ is a closed subgroup of $\mathrm{Sol}_1^4$. Thus, by using \eqref{kkk:2.0}, \eqref{kkk:2.1}, \eqref{kkk:2.3}, the
Gauss and Weingarten formulae, and the definition of Ricci curvature, we have the following proposition without proof.
\begin{proposition}\label{prop:3.2}
The hypersurface $M_{2,r}$ has the following properties:
\begin{enumerate}[\rm(1)]
    \item The hypersurface $M_{2,r}$ can also be presented as $\{(x,y,z,t)\in \mathrm{Sol}_1^4\mid t=r\}$. The unit normal vector field is given by $N = E_4$, where $E_4$ is defined by \eqref{kkk:2.1};
    \item It is a minimal hypersurface and has three distinct constant principal curvatures $-1$, $0$ and $1$. The eigenvalues of the Ricci tensor of $M_{2,r}$ are $\tfrac{1}{2}$ and $-\tfrac{1}{2}$ (multiplicity 2);
    \item It is an orbit of subgroup $H_2$ which passes through the point $(0, 0, 0, r)$. Therefore, the hypersurface $M_{2,r}$ is homogeneous, and it has no focal manifold.
        Moreover, for any given $r$, the hypersurface $M_{2,r}$ is congruent to $M_{2,0}$.
\end{enumerate}
\end{proposition}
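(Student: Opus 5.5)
The plan is to verify the three items of Proposition \ref{prop:3.2} by direct computation in the left invariant frame \eqref{kkk:2.1}, in the same way as the proof of Proposition \ref{prop:3.1}.

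For (1), I parametrize $M_{2,r}$ by $p(x_1,x_2,x_3)=(x_1,x_2,x_3,r)$. The coordinate vectors are
\[
p_{x_1}=\partial_x=e^{-r}E_1,\qquad p_{x_2}=\partial_y=e^{r}E_2-x_1E_3,\qquad p_{x_3}=\partial_z=E_3 .
\]
These span $\mathrm{span}\{E_1,E_2,E_3\}$, so the unit normal is $N=E_4$.

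For (2), I take the orthonormal tangent frame $\{E_1,E_2,E_3\}$. By \eqref{kkk:2.3} and the Weingarten formula \eqref{kkk:2.6},
\[
AE_1=-\tilde\nabla_{E_1}E_4=E_1,\qquad AE_2=-\tilde\nabla_{E_2}E_4=-E_2,\qquad AE_3=0 .
\]
So the principal curvatures are $1,-1,0$ and $M_{2,r}$ is minimal. The tangential parts of \eqref{kkk:2.3} give the induced connection:
\[
\nabla_{E_1}E_2=\tfrac12E_3,\quad \nabla_{E_1}E_3=-\tfrac12E_2,\quad \nabla_{E_2}E_1=-\tfrac12E_3,\quad \nabla_{E_2}E_3=\tfrac12E_1,\quad \nabla_{E_3}E_1=-\tfrac12E_2,\quad \nabla_{E_3}E_2=\tfrac12E_1 .
\]
All remaining $\nabla_{E_i}E_j$ vanish. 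This is the Levi-Civita connection of the Heisenberg group $\mathrm{Nil}^3$ with $[E_1,E_2]=E_3$. Its sectional curvatures are therefore $K(E_1\wedge E_2)=-\tfrac34$ and $K(E_1\wedge E_3)=K(E_2\wedge E_3)=\tfrac14$. This gives
\[
\mathrm{Ric}(E_1)=-\tfrac12E_1,\qquad \mathrm{Ric}(E_2)=-\tfrac12E_2,\qquad \mathrm{Ric}(E_3)=\tfrac12E_3 .
\]
To cross-check with the Gauss equation I would also need the ambient curvature; computing it directly from $\nabla$ as above avoids that.

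For (3), the multiplication law \eqref{kkk:2.0} gives $(x_1,x_2,x_3,0)(0,0,0,r)=(x_1,x_2,x_3,r)$. Hence $M_{2,r}=H_2\cdot(0,0,0,r)$. $H_2$ is closed because it is the zero set of the continuous function $t$, and it is a subgroup because $t$ adds under multiplication. For congruence, left translation $L_{(0,0,0,r)}$ is an isometry, and
\[
(0,0,0,r)(x_1,x_2,x_3,0)=(e^{r}x_1,e^{-r}x_2,x_3,r).
\]
Thus $L_{(0,0,0,r)}$ maps $M_{2,0}$ onto $M_{2,r}$, so $M_{2,r}$ is congruent to $M_{2,0}$.

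The remaining claim is that there is no focal manifold. The curves $s\mapsto(x,y,z,s)$ are geodesics, since $\tilde\nabla_{E_4}E_4=0$. So the normal exponential map of $M_{2,0}$ is the global diffeomorphism $(x,y,z,s)\mapsto(x,y,z,s)$, and its parallel hypersurfaces are exactly the $M_{2,s}$. These are all congruent to $M_{2,0}$, and none degenerates, so there is no focal set.

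None of these steps is a genuine obstacle. The only point that needs care is the no-focal-manifold claim, which rests on the normal geodesics being the $t$-lines.
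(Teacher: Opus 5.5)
Your proposal is correct and is essentially the computation the paper has in mind. The paper states Proposition~\ref{prop:3.2} without proof, pointing only to \eqref{kkk:2.0}, \eqref{kkk:2.1}, \eqref{kkk:2.3}, the Gauss--Weingarten formulae and the definition of Ricci curvature; all of your results check out against these: $AE_1=E_1$, $AE_2=-E_2$, $AE_3=0$, the induced Heisenberg connection with sectional curvatures $-\tfrac34,\tfrac14,\tfrac14$ and Ricci eigenvalues $-\tfrac12,-\tfrac12,\tfrac12$, the orbit identity, and the left translation $(0,0,0,r)(x_1,x_2,x_3,0)=(e^{r}x_1,e^{-r}x_2,x_3,r)$ (with your normal-exponential-map argument filling in the no-focal-manifold claim the paper only asserts).
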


\subsection{Examples of homogeneous hypersurfaces of $\mathrm{Sol}_{m,n}^4$}\label{sect:3.2}~

First, for $m\neq n$, we can construct a family of examples. Before stating examples,
we recall that $m$ and $n$ are positive integers such that the roots of the equation $X^3-m X^2+n X-1=0$ are all real and distinct. The roots of the above equation are $e^{\alpha}$, $e^{\beta}$ and $e^{\gamma}$, where $\alpha<\beta<\gamma$ and $\alpha+\beta+\gamma=0$.

\begin{example}\label{exa:3.4}
For any given $r\geq0$, we define the hypersurface
$$
M_{3,r}:=\{(x_1,\tfrac{1}{\beta}e^{\beta x_3}\tanh (|\beta| r),x_2,x_3-\tfrac{1}{\beta}\ln(\cosh (|\beta| r)))\in \mathrm{Sol}_{m,n(m\neq n)}^4\mid x_{1},x_{2},x_{3}\in\mathbb{R}\}.
$$
Put $H_3:=\{(x_1,0,x_2,x_3)\in \mathrm{Sol}_{m,n(m\neq n)}^4\mid  x_{1},x_{2},x_{3}\in \mathbb R\}\subset\operatorname{Iso}_o(\mathrm{Sol}_{m,n(m\neq n)}^4)=\mathrm{Sol}_{m,n(m\neq n)}^4$.
\end{example}
\begin{proposition}\label{prop:3.4}
The hypersurface $M_{3,r}$ has the following properties:
\begin{enumerate}[{\rm(1)}]
\item The hypersurface $M_{3,r}$ can also be presented as
$\{(x,y,z,t)\in \mathrm{Sol}_{m,n(m\neq n)}^4\mid
y e^{-\beta t}=\tfrac{\sinh(|\beta| r)}{\beta}\}$.
The unit normal vector field is given by $N=-\operatorname{sech}(|\beta| r)\, E_2 + \tanh(|\beta| r)\, E_4$, where $\{E_i\}^4_{i=1}$ is defined
by \eqref{ttt:2.1};
\item It is a minimal hypersurface and has constant principal curvatures $\alpha \tanh (|\beta| r)$, $\beta \tanh (|\beta| r)$ and $\gamma \tanh (|\beta| r)$. When $r=0$, $M_{3,0}$ is a totally geodesic hypersurface. The eigenvalues of the Ricci tensor of $M_{3,r}$ are $-\alpha(\alpha+\gamma)\operatorname{sech}^2(|\beta| r)$,  $-(\alpha^2+\gamma^2)\operatorname{sech}^2(|\beta| r)$ and $-\gamma(\alpha+\gamma)\operatorname{sech}^2(|\beta| r)$;
\item It is an orbit through $(0,\tfrac{1}{\beta}\tanh(|\beta| r),0,-\tfrac{1}{\beta}\ln(\cosh(|\beta| r)))$ of the subgroup $H_3$. Therefore, $M_{3,r}$ is a homogeneous hypersurface, and it has no focal manifold.
\end{enumerate}
\end{proposition}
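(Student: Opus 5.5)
I will follow the proof of Proposition \ref{prop:3.1}, adapting it to the metric \eqref{zx6.2} and the connection \eqref{ttt:2.3}. Throughout, write $s:=|\beta| r$, and note that $\beta\neq0$ because $m\neq n$.

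For (1), I parametrize $p(x_1,x_2,x_3)=(x_1,\tfrac1\beta e^{\beta x_3}\tanh s,x_2,x_3-\tfrac1\beta\ln\cosh s)$. At $p$ one has $e^{-\beta t}=e^{-\beta x_3}\cosh s$, so $y e^{-\beta t}=\tfrac1\beta\sinh s$. Conversely, given any point with $ye^{-\beta t}=\sinh s/\beta$, set $x_3=t+\tfrac1\beta\ln\cosh s$ to recover the parametrization. Using \eqref{ttt:2.1}, I compute the coordinate vector fields:
\[
V_1=p_{x_1}=e^{-\alpha t}E_1,\qquad V_2=p_{x_2}=e^{-\gamma t}E_3,
\]
\[
V_3=p_{x_3}=e^{\beta x_3}\tanh s\,\partial_y+\partial_t=\sinh s\,E_2+E_4 .
\]
Here I used $e^{\beta x_3}e^{-\beta t}=\cosh s$. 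The vector $N=-\operatorname{sech}s\,E_2+\tanh s\,E_4$ is a unit vector orthogonal to $V_1,V_2,V_3$, so it is a unit normal.

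For (2), I take the orthonormal tangent frame
\[
W_1=E_1,\qquad W_2=E_3,\qquad W_3=\tanh s\,E_2+\operatorname{sech}s\,E_4 .
\]
By the Weingarten formula $AX=-\tilde\nabla_XN$ and \eqref{ttt:2.3}, I expect
\[
\tilde\nabla_{E_1}N=\tanh s\,(-\alpha E_1),\qquad \tilde\nabla_{E_3}N=-\gamma\tanh s\,E_3 .
\]
This gives $AW_1=\alpha\tanh s\,W_1$ and $AW_2=\gamma\tanh s\,W_2$. For $W_3$, I combine $\tilde\nabla_{E_2}E_2=\beta E_4$, $\tilde\nabla_{E_2}E_4=-\beta E_2$ and $\tilde\nabla_{E_4}=0$. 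The result should be $\tilde\nabla_{W_3}N=-\beta\tanh s\,(\operatorname{sech}s\,E_4+\tanh s\,E_2)$, so $AW_3=\beta\tanh s\,W_3$. Hence the principal curvatures are $\alpha\tanh s$, $\beta\tanh s$ and $\gamma\tanh s$. Their sum vanishes because $\alpha+\beta+\gamma=0$, so $M_{3,r}$ is minimal. For $r=0$ they all vanish, so $M_{3,0}$ is totally geodesic.

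For the Ricci tensor, I compute $\nabla_{W_i}W_j$ from the Gauss formula and then the sectional curvatures $K(W_i\wedge W_j)$ directly from their definition, as in Proposition \ref{prop:3.1}. Alternatively, I can use the Gauss equation \eqref{kkk:2.7} with the ambient curvatures of $\mathrm{Sol}_{m,n}^4$. Summing the sectional curvatures should produce the stated eigenvalues, each carrying the factor $\operatorname{sech}^2 s$. This Ricci computation is the step I expect to be most laborious and error-prone: $W_3$ mixes $E_2$ and $E_4$, and the $\tanh^2$ terms must cancel against the shape-operator contributions to leave pure $\operatorname{sech}^2 s$ factors. To keep the bookkeeping manageable, I would use the frame $W_i$, which diagonalizes $A$, and the Gauss equation.

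For (3), the multiplication law \eqref{ttt:2.0} shows that $H_3$ is a closed subgroup, since it is the zero set of the $y$-coordinate and is closed under products. Let $q=(0,\tfrac1\beta\tanh s,0,-\tfrac1\beta\ln\cosh s)$. Then
\[
(x_1,0,x_2,x_3)\cdot q=\bigl(x_1,\tfrac1\beta e^{\beta x_3}\tanh s,x_2,x_3-\tfrac1\beta\ln\cosh s\bigr),
\]
which is exactly the parametrization of $M_{3,r}$. So $M_{3,r}=H_3\cdot q$ is homogeneous. It has no focal manifold because $H_3$ acts freely, so all its orbits are three-dimensional hypersurfaces.
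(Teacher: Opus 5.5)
Your proposal is correct and follows essentially the same route as the paper: the same coordinate frame $V_i$ and unit normal $N$, the same orthonormal frame (with $W_2$ and $W_3$ interchanged), the Weingarten formula giving $A$ diagonal with eigenvalues $\alpha\tanh s,\gamma\tanh s,\beta\tanh s$, and the orbit computation $(x_1,0,x_2,x_3)\cdot q$ via \eqref{ttt:2.0}. The Ricci step you only sketch does go through. The paper obtains it intrinsically from the $\nabla_{W_i}W_j$; your Gauss-equation variant gives $K(E_1\wedge E_3)=-\alpha\gamma\operatorname{sech}^2 s$, $K(E_1\wedge W_3)=-\alpha^2\operatorname{sech}^2 s$ and $K(E_3\wedge W_3)=-\gamma^2\operatorname{sech}^2 s$, and the off-diagonal Ricci entries vanish because $A$ is diagonal in your frame and the ambient curvature operator is diagonal on $\{E_i\wedge E_j\}$, so the summed values really are the eigenvalues.
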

\begin{proof}
(1) At any point  $p\in M_{3,r}$, we consider the frame field $\{V_i\}^3_{i=1}$ as follows:
$$
\begin{aligned}
V_1&=p_{x_1}=e^{-\alpha (x_3-\tfrac{1}{\beta}\ln(\cosh (|\beta| r)))}E_1, \ \
V_2=p_{x_2}=e^{-\gamma(x_3-\tfrac{1}{\beta}\ln(\cosh (|\beta| r)))}E_3,\\
V_3&=p_{x_3}=\sinh(|\beta| r)\, E_2+E_4.
\end{aligned}
$$
Then, $N=-\operatorname{sech}(|\beta| r)\, E_2 + \tanh(|\beta| r)\, E_4$ is a unit normal vector field.

(2) We choose the orthonormal frame field  $\{W_i\}^3_{i=1}$ on $M_{3,r}$ as follows:
\begin{equation}\label{iii:3.1}
W_1=E_1,\
W_2=\tfrac{V_3}{\cosh(|\beta| r)}=\tanh(|\beta| r)\, E_2+\operatorname{sech}(|\beta| r)\, E_4,\
W_3=E_3.
\end{equation}
From \eqref{ttt:2.3}, \eqref{iii:3.1}, the Gauss and Weingarten formulae, we get
\begin{equation}\label{iii:3.000}
\begin{aligned}
&\nabla_{W_1}W_1 =\operatorname{sech}(|\beta| r) \alpha W_2,\ \ \ \ \ \ \,
\nabla_{W_1}W_2 =-\operatorname{sech}(|\beta| r) \alpha W_1,\\
&\nabla_{W_3}W_2=-\operatorname{sech}(|\beta| r) \gamma W_3,\ \ \ \
\nabla_{W_3}W_3=\operatorname{sech}(|\beta| r)\gamma W_2,\\
&\nabla_{W_1}W_3=
\nabla_{W_2}W_1=
\nabla_{W_2}W_2=
\nabla_{W_2}W_3=
\nabla_{W_3}W_1 =0,
\end{aligned}
\end{equation}
\begin{equation}\label{iii:3.2}
AW_1=\alpha \tanh(|\beta| r)\, W_1, \ AW_2=\beta\tanh(|\beta| r)W_2,\
AW_3=\gamma\tanh(|\beta| r)\, W_3.
\end{equation}
The equation \eqref{iii:3.2} shows that the principal curvatures of $M_{3,r}$
are $\alpha \tanh (|\beta| r)$, $\beta \tanh (|\beta| r)$ and $\gamma \tanh (|\beta| r)$.

It follows from \eqref{iii:3.000} that the sectional curvature and Ricci curvature of $M_{3,r}$ are
$$
\begin{aligned}
&K(W_1\wedge W_2)=-\alpha^2\operatorname{sech}^2(|\beta| r),\ \
K(W_1\wedge W_3)=-\alpha\gamma\operatorname{sech}^2(|\beta| r),\\
&K(W_2\wedge W_3)=-\gamma^2\operatorname{sech}^2(|\beta| r),\ \
\mathrm{Ric}(W_1)=-\alpha(\alpha+\gamma)\operatorname{sech}^2(|\beta| r) W_1,\\
&\mathrm{Ric}(W_2)=-(\alpha^2+\gamma^2)\operatorname{sech}^2(|\beta| r) W_2,\ \
\mathrm{Ric}(W_3)=-\gamma(\alpha+\gamma)\operatorname{sech}^2(|\beta| r) W_3.
\end{aligned}
$$

(3) According to \eqref{ttt:2.0}, we can directly verify that $H_3$ is a closed subgroup of $\mathrm{Sol}_{m,n(m\neq n)}^4$. Moreover, it follows from \eqref{ttt:2.0} that the hypersurface $M_{3,r}$ is an orbit of the subgroup $H_3$ which passes through $(0,\tfrac{1}{\beta}\tanh(|\beta| r),0,-\tfrac{1}{\beta}\ln(\cosh(|\beta| r)))$,  and it has no focal manifold.
\end{proof}

Next, for $m=n$, we can construct the following family of examples.

\begin{example}\label{exa:3.8}
For any fixed $0\leq d<1$, we define the hypersurface
$$
M_{4,d}:=\{(x_1,\tfrac{d}{\sqrt{1-d^2}}x_2,x_3,x_2)\in \mathrm{Sol}_{m,m}^4\mid x_{1},x_{2},x_{3}\in\mathbb{R}\}.
$$
Put $H_4:=\{(x_1,\tfrac{d}{\sqrt{1-d^2}}x_2,x_3,x_2)\in \mathrm{Sol}_{m,m}^4\mid  x_{1},x_{2},x_{3}\in \mathbb R\}\subset\operatorname{Iso}_o(\mathrm{Sol}_{m,m}^4)=\mathrm{Sol}_{m,m}^4$.
\end{example}
\begin{proposition}\label{prop:3.8}
The hypersurface $M_{4,d}$ has the following properties:
\begin{enumerate}[{\rm(1)}]
\item The hypersurface $M_{4,d}$ can also be presented as $\{(x,y,z,t)\in \mathrm{Sol}_{m,m}^4\mid dt-\sqrt{1-d^2}y=0\}$.
The unit normal vector field is given by $N=-\sqrt{1-d^2}\, E_2 + d\, E_4$, where $\{E_i\}^4_{i=1}$ is defined
by \eqref{ttt:2.1};
\item It is a minimal hypersurface and has constant principal curvatures  $\alpha d$, $0$ and $-\alpha d$. When $d=0$, $M_{4,0}$ is a totally geodesic hypersurface.
The eigenvalues of the Ricci tensor of $M_{4,d}$ are $2\alpha^2(d^2-1)$ and $0$ (multiplicity 2);
\item It is an orbit through $(0,0,0,0)$ of the subgroup $H_4$. Therefore, $M_{4,d}$ is a homogeneous hypersurface, and it has no focal manifold. Moreover, for any fixed $d$, the orbits obtained from the action of $H_4$ on any point are congruent to each other.
\end{enumerate}
\end{proposition}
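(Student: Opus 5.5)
The plan is to verify the three items by direct computation, following the template of Propositions \ref{prop:3.1} and \ref{prop:3.4}. We specialize the structure of Subsection \ref{sect:2.2} to $m=n$, where $\beta=0$ and $\gamma=-\alpha$. Write $k=\tfrac{d}{\sqrt{1-d^2}}$ and parametrize $p(x_1,x_2,x_3)=(x_1,kx_2,x_3,x_2)$. By \eqref{ttt:2.1}, and since $t=x_2$, the coordinate tangent fields are
$$p_{x_1}=e^{-\alpha x_2}E_1,\quad p_{x_3}=e^{\alpha x_2}E_3,\quad p_{x_2}=k\partial_y+\partial_t=kE_2+E_4,$$
where the last identity uses $\beta=0$. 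A vector orthogonal to all three is proportional to $-E_2+kE_4$. Normalizing and multiplying by $\sqrt{1-d^2}$ gives $N=-\sqrt{1-d^2}E_2+dE_4$. Eliminating $x_2$ from $y=kt$ gives the level-set description $dt-\sqrt{1-d^2}\,y=0$. This proves (1).

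For (2), take the orthonormal frame $W_1=E_1$, $W_2=dE_2+\sqrt{1-d^2}E_4$, $W_3=E_3$. We compute $AW_i=-\tilde\nabla_{W_i}N$ from \eqref{ttt:2.3} with $\beta=0$ and $\gamma=-\alpha$. Since $\tilde\nabla_{E_1}E_4=-\alpha E_1$ and $\tilde\nabla_{E_1}E_2=0$, we get $AW_1=\alpha d\,W_1$. Since $\tilde\nabla_{E_3}E_4=\alpha E_3$, we get $AW_3=-\alpha d\,W_3$. All covariant derivatives among $E_2,E_4$ vanish, so $AW_2=0$. The principal curvatures are therefore $\alpha d,0,-\alpha d$, so $M_{4,d}$ is minimal (indeed austere), and it is totally geodesic when $d=0$. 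For the Ricci tensor, we apply the Gauss equation \eqref{kkk:2.7}, using that the ambient sectional curvatures are $K(E_1,E_4)=K(E_3,E_4)=-\alpha^2$, $K(E_1,E_3)=-\alpha\gamma=\alpha^2$, and that every plane containing $E_2$ is flat (this follows from $\beta=0$ and can be checked from the connection table). The ambient curvature tensor is diagonal in the frame $E_i$, which gives:
\begin{itemize}
\item $K(W_1\wedge W_2)=K(W_2\wedge W_3)=-\alpha^2(1-d^2)$, where the shape-operator term vanishes because $AW_2=0$;
\item $K(W_1\wedge W_3)=\alpha^2-\alpha^2d^2=\alpha^2(1-d^2)$.
\end{itemize}
Hence $\mathrm{Ric}(W_1)=\mathrm{Ric}(W_3)=0$ and $\mathrm{Ric}(W_2)=2\alpha^2(d^2-1)W_2$. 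As a cross-check, the same numbers come from computing the induced connection directly, e.g. $\nabla_{W_1}W_1=\alpha\sqrt{1-d^2}\,W_2$.

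For (3), the multiplication \eqref{ttt:2.0} gives
$$(x_1,ks,x_3,s)(x_1',ks',x_3',s')=(x_1+e^{\alpha s}x_1',\,k(s+s'),\,x_3+e^{-\alpha s}x_3',\,s+s').$$
So $H_4$ is a closed subgroup, and its orbit through the origin is $H_4$ itself, which equals $M_{4,d}$. For the remaining orbits, note that with $\beta=0$ the elements $c=(0,y_0,0,0)$ are central: both $c\cdot(x,y,z,t)$ and $(x,y,z,t)\cdot c$ equal $(x,y+y_0,z,t)$. Hence $H_4\cdot c=cH_4=L_c(M_{4,d})$, which is congruent to $M_{4,d}$. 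Every point of $\mathrm{Sol}_{m,m}^4$ lies on some level set $y-kt=\mathrm{const}$, and each such level set is of the form $H_4\cdot(0,y_0,0,0)$. So all orbits are congruent to $M_{4,d}$. The action of $H_4$ is free, so all orbits are $3$-dimensional and form a regular foliation by parallel hypersurfaces; no singular orbit exists, and hence there is no focal manifold.

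The only delicate point is the curvature bookkeeping in (2). One must confirm that planes containing $E_2$ are flat when $\beta=0$, and that no mixed curvature terms contribute to $K(W_i\wedge W_j)$; the rest is routine.
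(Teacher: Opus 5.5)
Your proposal is correct and follows essentially the same route as the paper. Item (1) comes from the coordinate tangent fields, the shape operator in (2) comes from the Weingarten formula in the frame $W_1=E_1$, $W_2=dE_2+\sqrt{1-d^2}E_4$, $W_3=E_3$, and (3) comes from left translations together with the level-set description $dt-\sqrt{1-d^2}\,y=\mathrm{const}$.

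The differences are minor, and both of your variants check out:
\begin{itemize}
\item You get the sectional curvatures from the Gauss equation, using the ambient curvatures of $\mathrm{Sol}^3\times\mathbb{R}$, rather than from the induced connection as the paper does. This relies on $E_2$ being parallel and the ambient curvature operator being diagonal in $\{E_i\}$. Both hold, and since $A$ is also diagonal in $\{W_i\}$, they make the induced Ricci tensor diagonal in $\{W_i\}$ as well.
\item You use the central elements $(0,y_0,0,0)$ instead of the paper's translation by $(0,-y_2,0,-y_4)$.
\end{itemize}
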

\begin{proof}
(1) At any point  $p\in M_{4,d}$, we consider the frame field $\{V_i\}^3_{i=1}$ as follows:
$$
V_1=p_{x_1}=e^{-\alpha x_2} E_1,\ \
V_2=p_{x_2}=\tfrac{d}{\sqrt{1-d^2}}E_2+E_4,\ \
V_3=p_{x_3}=e^{\alpha x_2}E_3.
$$
This shows that $N=-\sqrt{1-d^2} E_2 +d\, E_4$ is a unit normal vector field.

(2) Consider the orthonormal frame field  $\{W_i\}^3_{i=1}$ on $M_{4,d}$:
\begin{equation}\label{aaa:3.1}
W_1=\tfrac{V_1}{e^{-\alpha x_2}}=E_1,\ \ W_2=\sqrt{1-d^2}V_2=d E_2+\sqrt{1-d^2} E_4,\ \
W_3=\tfrac{V_3}{e^{\alpha x_2}}=E_3.
\end{equation}
Using \eqref{ttt:2.3}, \eqref{aaa:3.1}, $\beta=0$, $\gamma=-\alpha$, the Gauss and Weingarten formulae, we get
\begin{equation}\label{aaa:3.000}
\begin{aligned}
&\nabla_{W_1}W_1 =\sqrt{1-d^2} \alpha W_2,\ \ \ \  \nabla_{W_1}W_2 =-\sqrt{1-d^2} \alpha W_1,\\
&\nabla_{W_3}W_2=\sqrt{1-d^2} \alpha W_3,\ \ \ \
\nabla_{W_3}W_3 =-\sqrt{1-d^2} \alpha W_2,\\
&\nabla_{W_1}W_3=\nabla_{W_2}W_1=\nabla_{W_2}W_2=\nabla_{W_2}W_3=
\nabla_{W_3}W_1 =0,
\end{aligned}
\end{equation}
\begin{equation}\label{aaa:3.2}
AW_1=\alpha d\, W_1, \ AW_2=0,\
AW_3=-\alpha d\, W_3.
\end{equation}
It follows from \eqref{aaa:3.2} that the principal curvatures of $M_{4,d}$ are $\alpha d$, $0$ and $-\alpha d$.

By direct calculations, with the use of \eqref{aaa:3.000}, we obtain the sectional curvature and Ricci curvature of $M_{4,d}$ are
$$
K(W_1\wedge W_2)=K(W_2\wedge W_3)=\alpha^2(d^2-1),\ \ K(W_1\wedge W_3)=\alpha^2(1-d^2),
$$
$$
\mathrm{Ric}(W_1)=\mathrm{Ric}(W_3)=0,\ \
\mathrm{Ric}(W_2)=2\alpha^2(d^2-1) W_2.
$$

(3) Using \eqref{ttt:2.0}, we can directly verify that $H_4$ is a closed subgroup of $\mathrm{Sol}_{m,m}^4$. Moreover, it follows from \eqref{ttt:2.0} that the hypersurface $M_{4,d}$ is an orbit of the subgroup $H_4$ which passes through $(0,0,0,0)$,  and it has no focal manifold.

For any fixed $d$ and any point $(y_1,y_2,y_3,y_4)\in \mathrm{Sol}_{m,m}^4$,
put $H_4\cdot(0,0,0,0)=(x_1,\tfrac{d}{\sqrt{1-d^2}}x_2,x_3,x_2)=:M_1$ and
$H_4\cdot(y_1,y_2,y_3,y_4)=(x_1+e^{\alpha x_2}y_1,\tfrac{d}{\sqrt{1-d^2}}x_2+y_2,
x_3+e^{\gamma x_2}y_3,x_2+y_4)=:M_2$. After a left translation by $(0, -y_2, 0, -y_4)$
to the hypersurface $M_2$, we get $(0, -y_2, 0, -y_4)\cdot M_2=
(e^{-\alpha y_4}(x_1+e^{\alpha x_2}y_1),\tfrac{d}{\sqrt{1-d^2}}x_2,
e^{-\gamma y_4}(x_3+e^{\gamma x_2}y_3),x_2)$, which shows that $dt-\sqrt{1-d^2}y=0$ holds. Then from item (1),
we know that $M_1$ and $M_2$ are congruent.
\end{proof}

Finally, for any $m$ and $n$, we introduce the following three families of examples.

\begin{example}\label{exa:3.3}
For any given $r\geq0$, we define the hypersurface
\begin{equation*}
M_{5,r}:=\{(\tfrac{1}{\alpha}e^{\alpha x_3}\tanh (-\alpha r),x_1,x_2,x_3-\tfrac{1}{\alpha}\ln(\cosh (-\alpha r)))\in \mathrm{Sol}_{m,n}^4\mid x_{1},x_{2},x_{3}\in\mathbb{R}\}.
\end{equation*}
Put $H_5:=\{(0,x_1,x_2,x_3)\in \mathrm{Sol}_{m,n}^4\mid  x_{1},x_{2},x_{3}\in \mathbb R\}\subset\operatorname{Iso}_o(\mathrm{Sol}_{m,n}^4)=\mathrm{Sol}_{m,n}^4$.
\end{example}
\begin{proposition}\label{prop:3.3}
The hypersurface $M_{5,r}$ has the following properties:
\begin{enumerate}[{\rm(1)}]
\item The hypersurface $M_{5,r}$ can also be presented as $\{(x,y,z,t)\in \mathrm{Sol}_{m,n}^4\mid x e^{-\alpha t}=\tfrac{\sinh(-\alpha r)}{\alpha}\}$.
The unit normal vector field is given by $N=-\operatorname{sech}(-\alpha r)\, E_1 + \tanh(-\alpha r)\, E_4$, where $\{E_i\}^4_{i=1}$ is defined
by \eqref{ttt:2.1};
\item It is a minimal hypersurface and has constant principal curvatures $\alpha \tanh (-\alpha r)$, $\beta \tanh (-\alpha r)$ and $\gamma \tanh (-\alpha r)$. When $r=0$, the hypersurface $M_{5,0}$ is totally geodesic. The eigenvalues of the Ricci tensor of $M_{5,r}$ are $-(\beta^2+\gamma^2)\operatorname{sech}^2(-\alpha r)$,  $-\beta(\beta+\gamma)\operatorname{sech}^2(-\alpha r)$ and $-\gamma(\beta+\gamma)\operatorname{sech}^2(-\alpha r)$;

\item It is an orbit through $(\tfrac{1}{\alpha}\tanh(-\alpha r),0,0,-\tfrac{1}{\alpha}\ln(\cosh(-\alpha r)))$ of the subgroup $H_5$. Therefore, $M_{5,r}$ is a homogeneous hypersurface, and it has no focal manifold.
\end{enumerate}
\end{proposition}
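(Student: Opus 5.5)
The plan follows the proof of Proposition \ref{prop:3.4}, with the roles of $E_1,\alpha$ and $E_2,\beta$ interchanged. Throughout, write $s=-\alpha r\geq0$; this is nonnegative because $\alpha<0$, which follows from $\alpha<\beta<\gamma$ and $\alpha+\beta+\gamma=0$.

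For (1), parametrize by $p(x_1,x_2,x_3)=(\tfrac1\alpha e^{\alpha x_3}\tanh s,\,x_1,\,x_2,\,x_3-\tfrac1\alpha\ln\cosh s)$. Set $t=x_3-\tfrac1\alpha\ln\cosh s$, so that $e^{\alpha x_3}=e^{\alpha t}\cosh s$. Then $x e^{-\alpha t}=\tfrac1\alpha\tanh s\cosh s=\sinh s/\alpha$, which gives the implicit description. Using \eqref{ttt:2.1}, the coordinate vectors are
$$V_1=p_{x_1}=e^{-\beta t}E_2,\qquad V_2=p_{x_2}=e^{-\gamma t}E_3,$$
$$V_3=p_{x_3}=e^{\alpha x_3}\tanh s\,e^{-\alpha t}E_1+E_4=\sinh s\,E_1+E_4.$$
The vector $N=-\operatorname{sech}s\,E_1+\tanh s\,E_4$ is of unit length and orthogonal to all three, so it is the unit normal.

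For (2), take the orthonormal frame $W_1=\tanh s\,E_1+\operatorname{sech}s\,E_4$, $W_2=E_2$, $W_3=E_3$. Compute $AW_i=-\tilde\nabla_{W_i}N$ from \eqref{ttt:2.3}, recalling that the coefficients of $N$ are constant.

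1. $\tilde\nabla_{E_2}E_1=\tilde\nabla_{E_2}E_4$-terms give $-\tilde\nabla_{E_2}N=-\tanh s(-\beta E_2)=\beta\tanh s\,W_2$.
2. Similarly, $AW_3=\gamma\tanh s\,W_3$.
3. For $W_1$, expand $-\tilde\nabla_{W_1}N=\tanh s\operatorname{sech}s\,\tilde\nabla_{E_1}E_1-\tanh^2 s\,\tilde\nabla_{E_1}E_4+\dots$, using that $\tilde\nabla_{E_4}$ kills everything. This yields $\alpha\tanh s\operatorname{sech}s\,E_4+\alpha\tanh^2s\,E_1=\alpha\tanh s\,W_1$.

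The principal curvatures are therefore $\alpha\tanh s,\beta\tanh s,\gamma\tanh s$, and they sum to $0$, so the hypersurface is minimal. When $r=0$ they all vanish, so $M_{5,0}$ is totally geodesic.

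For the Ricci eigenvalues, compute $\nabla_{W_i}W_j$ as the tangential parts of $\tilde\nabla_{W_i}W_j$; the result mirrors \eqref{iii:3.000}. For example, $\nabla_{W_2}W_2=\beta\operatorname{sech}s\,W_1$, $\nabla_{W_2}W_1=-\beta\operatorname{sech}s\,W_2$, and $W_1$ is geodesic. The sectional curvatures then come out as
$$K(W_1\wedge W_2)=-\beta^2\operatorname{sech}^2s,\quad K(W_1\wedge W_3)=-\gamma^2\operatorname{sech}^2s,\quad K(W_2\wedge W_3)=-\beta\gamma\operatorname{sech}^2s.$$
Summing these gives the stated Ricci eigenvalues. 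As a cross-check, the Gauss equation \eqref{kkk:2.7} should reproduce the same values from the ambient curvature. The main bookkeeping risk is signs in this step, so I would verify one curvature both ways.

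For (3), note that $H_5=\{x=0\}$ is closed under the multiplication \eqref{ttt:2.0}, since the product has first coordinate $0+e^{\alpha t_1}\cdot0$, and it is closed as a subset. Left-multiplying the base point by $(0,x_1',x_2',x_3')$ gives
$$\bigl(e^{\alpha x_3'}\tfrac1\alpha\tanh s,\;x_1',\;x_2',\;x_3'-\tfrac1\alpha\ln\cosh s\bigr),$$
after absorbing the $e^{\beta x_3'},e^{\gamma x_3'}$ factors into the free parameters. This sweeps out exactly $M_{5,r}$, which is therefore a homogeneous orbit.

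No focal manifolds occur because the orbits $\{xe^{-\alpha t}=c\}$ for $c\in\mathbb R$ foliate the whole space by hypersurfaces, with no singular orbit. The steps are routine; the only real obstacle is keeping the connection signs straight in the curvature computation.
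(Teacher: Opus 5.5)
Your proposal is correct and follows the paper's proof essentially step for step: the same parametrization and coordinate vectors $V_i$ for (1), the same frame $W_1=\tanh s\,E_1+\operatorname{sech}s\,E_4$, $W_2=E_2$, $W_3=E_3$ with the same shape operator, connection table and sectional curvatures for (2), and the same direct orbit computation under left multiplication by $H_5$ for (3). Your remark that $xe^{-\alpha t}$ is $H_5$-invariant, so every orbit is a regular hypersurface, justifies the ``no focal manifold'' claim more explicitly than the paper does. The one step to state explicitly is that summing sectional curvatures gives the Ricci eigenvalues only because $\mathrm{Ric}$ is diagonal in $\{W_i\}$. This follows from your connection table, or from the Gauss equation, since $A$ is diagonal in $\{W_i\}$ and the ambient curvature operator is diagonal in the basis $\{E_a\wedge E_b\}$.
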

\begin{proof}
(1) At any point  $p\in M_{5,r}$, we consider the frame field $\{V_i\}^3_{i=1}$ as follows:
$$
\begin{aligned}
V_1&=p_{x_1}=e^{-\beta (x_3-\tfrac{1}{\alpha}\ln(\cosh (-\alpha r)))}E_2, \ \
V_2=p_{x_2}=e^{-\gamma(x_3-\tfrac{1}{\alpha}\ln(\cosh (-\alpha r)))}E_3,\\
V_3&=p_{x_3}=\sinh(-\alpha r)\, E_1+E_4.
\end{aligned}
$$
Thus $N=-\operatorname{sech}(-\alpha r)\, E_1 + \tanh(-\alpha r)\, E_4$ is a unit normal vector field.

(2) Now, we choose the orthonormal frame field  $\{W_i\}^3_{i=1}$ on $M_{5,r}$:
\begin{equation}\label{bbb:3.1}
\begin{aligned}
W_1=\tfrac{V_3}{\cosh(-\alpha r)}=\tanh(-\alpha r)\, E_1+\operatorname{sech}(-\alpha r)\, E_4, \ \ W_2=E_2,\ \
W_3=E_3.
\end{aligned}
\end{equation}
Using \eqref{ttt:2.3}, \eqref{bbb:3.1}, the Gauss and  Weingarten formulae, we obtain
\begin{equation}\label{bbb:3.000}
\begin{aligned}
&\nabla_{W_1}W_1= \nabla_{W_1}W_2=\nabla_{W_1}W_3=\nabla_{W_2}W_3=\nabla_{W_3}W_2=0,\\
&\nabla_{W_2}W_1=-\operatorname{sech}(-\alpha r)\beta\, W_2,\ \
\nabla_{W_2}W_2 =\operatorname{sech}(-\alpha r) \beta\,  W_1,\\
&\nabla_{W_3}W_1=-\operatorname{sech}(-\alpha r) \gamma\, W_3,\ \
\nabla_{W_3}W_3=\operatorname{sech}(-\alpha r) \gamma W_1,
\end{aligned}
\end{equation}
\begin{equation}\label{bbb:3.2}
AW_1=\alpha \tanh(-\alpha r)\, W_1, \ \ AW_2=\beta\tanh(-\alpha r)W_2, \ \
AW_3=\gamma\tanh(-\alpha r)\, W_3.
\end{equation}
The equation \eqref{bbb:3.2} shows that the principal curvatures of $M_{5,r}$ are $\alpha \tanh (-\alpha r)$, $\beta \tanh (-\alpha r)$ and $\gamma \tanh (-\alpha r)$.

It follows from \eqref{bbb:3.000} that the sectional curvature and Ricci curvature of $M_{5,r}$ are
$$
\begin{aligned}
&K(W_1\wedge W_2)=-\beta^2\operatorname{sech}^2(-\alpha r),\ \ K(W_1\wedge W_3)=-\gamma^2\operatorname{sech}^2(-\alpha r),\\
&K(W_2\wedge W_3)=-\beta \gamma\operatorname{sech}^2(-\alpha r),\ \
\mathrm{Ric}(W_1)=-(\beta^2+\gamma^2)\operatorname{sech}^2(-\alpha r) W_1,\\
&\mathrm{Ric}(W_2)=-\beta(\beta+\gamma)\operatorname{sech}^2(-\alpha r) W_2,\ \
\mathrm{Ric}(W_3)=-\gamma(\beta+\gamma)\operatorname{sech}^2(-\alpha r) W_3.
\end{aligned}
$$

(3) According to \eqref{ttt:2.0}, we can directly verify that $H_5$ is a closed subgroup of $\mathrm{Sol}_{m, n}^4$. Moreover, it follows from \eqref{ttt:2.0} that the hypersurface $M_{5,r}$ is an orbit of the subgroup $H_5$ which passes through $(\tfrac{1}{\alpha}\tanh(-\alpha r),0,0,-\tfrac{1}{\alpha}\ln(\cosh(-\alpha r)))$,  and it has no focal manifold.
\end{proof}
\begin{example}\label{exa:3.5}
For any given $r\geq0$, we define the hypersurface
$$
M_{6,r}:=\{(x_1,x_2,\tfrac{1}{\gamma}e^{\gamma x_3}\tanh (\gamma r),x_3-\tfrac{1}{\gamma}\ln(\cosh (\gamma r)))\in \mathrm{Sol}_{m,n}^4\mid x_{1},x_{2},x_{3}\in\mathbb{R}\}.
$$
Put $H_6:=\{(x_1,x_2,0,x_3)\in \mathrm{Sol}_{m,n}^4\mid  x_1,x_2,x_3\in
\mathbb R\}\subset\operatorname{Iso}_o(\mathrm{Sol}_{m,n}^4)=\mathrm{Sol}_{m,n}^4$.
\end{example}
\begin{proposition}\label{prop:3.5}
The hypersurface $M_{6,r}$ has the following properties:
\begin{enumerate}[{\rm(1)}]
\item The hypersurface $M_{6,r}$ can also be presented as $\{(x,y,z,t)\in \mathrm{Sol}_{m, n}^4\mid z e^{-\gamma t}=\tfrac{\sinh(\gamma r)}{\gamma}\}$.
The unit normal vector field is given by $N=-\operatorname{sech}(\gamma r)\, E_3 + \tanh(\gamma r)\, E_4$, where $\{E_i\}^4_{i=1}$ is defined
by \eqref{ttt:2.1};
\item It is a minimal hypersurface and has constant principal curvatures $\alpha \tanh (\gamma r)$, $\beta \tanh (\gamma r)$ and $\gamma \tanh (\gamma r)$. When $r=0$, $M_{6,0}$ is a totally geodesic hypersurface. The eigenvalues of the Ricci tensor of $M_{6,r}$ are $-\alpha(\alpha+\beta)\operatorname{sech}^2(\gamma r)$,  $-\beta(\alpha+\beta)\operatorname{sech}^2(\gamma r)$ and $-(\alpha^2+\beta^2)\operatorname{sech}^2(\gamma r)$;

\item It is an orbit through $(0,0, \tfrac{1}{\gamma}\tanh(\gamma r),-\tfrac{1}{\gamma}\ln(\cosh(\gamma r)))$ of the subgroup $H_6$. Therefore, $M_{6,r}$ is a homogeneous hypersurface, and it has no focal manifold.
\end{enumerate}
\end{proposition}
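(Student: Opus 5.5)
The proof mirrors those of Propositions \ref{prop:3.4} and \ref{prop:3.3}, with the roles of the coordinates $x$ and $z$ (equivalently $\alpha$ and $\gamma$) exchanged.

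\emph{Item (1).} Parametrize $M_{6,r}$ by $p(x_1,x_2,x_3)$ as in Example \ref{exa:3.5}. Set $s:=x_3-\tfrac{1}{\gamma}\ln(\cosh(\gamma r))$; this is the $t$-coordinate. Then
\[
z e^{-\gamma t}=\tfrac{1}{\gamma}\tanh(\gamma r)\cosh(\gamma r)=\tfrac{\sinh(\gamma r)}{\gamma},
\]
and conversely every point of that level set is attained. Using \eqref{ttt:2.1}, the coordinate vector fields are
\[
V_1=p_{x_1}=e^{-\alpha s}E_1,\quad V_2=p_{x_2}=e^{-\beta s}E_2,\quad V_3=p_{x_3}=e^{\gamma x_3}\tanh(\gamma r)\,\partial_z+\partial_t .
\]
Since $e^{\gamma x_3}=e^{\gamma s}\cosh(\gamma r)$, the last one becomes $V_3=\sinh(\gamma r)E_3+E_4$. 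The vector $N=-\operatorname{sech}(\gamma r)E_3+\tanh(\gamma r)E_4$ is a unit vector orthogonal to all three, so it is the unit normal.

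\emph{Item (2).} Take the orthonormal frame
\[
W_1=E_1,\qquad W_2=E_2,\qquad W_3=V_3/\cosh(\gamma r)=\tanh(\gamma r)E_3+\operatorname{sech}(\gamma r)E_4 .
\]
From \eqref{ttt:2.3} and the Weingarten formula:
\begin{itemize}
\item $AW_1=-\tilde\nabla_{W_1}N=\operatorname{sech}(\gamma r)\tilde\nabla_{E_1}E_3-\tanh(\gamma r)\tilde\nabla_{E_1}E_4=\alpha\tanh(\gamma r)W_1$;
\item similarly $AW_2=\beta\tanh(\gamma r)W_2$;
\item $AW_3=-\tanh(\gamma r)\tilde\nabla_{E_3}N=\tanh(\gamma r)\bigl(\operatorname{sech}(\gamma r)\gamma E_4+\tanh(\gamma r)\gamma E_3\bigr)=\gamma\tanh(\gamma r)W_3$, because $\tilde\nabla_{E_4}$ annihilates the frame.
\end{itemize}
Hence the principal curvatures are $\alpha\tanh(\gamma r)$, $\beta\tanh(\gamma r)$, $\gamma\tanh(\gamma r)$. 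They are constant, and their sum vanishes since $\alpha+\beta+\gamma=0$, so $M_{6,r}$ is minimal. For $r=0$ they all vanish, so $M_{6,0}$ is totally geodesic.

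For the Ricci tensor, compute the Levi-Civita connection of $M_{6,r}$ by the Gauss formula. The only nonzero terms are
\begin{align*}
&\nabla_{W_1}W_1=\alpha\operatorname{sech}(\gamma r)W_3,\qquad \nabla_{W_1}W_3=-\alpha\operatorname{sech}(\gamma r)W_1,\\
&\nabla_{W_2}W_2=\beta\operatorname{sech}(\gamma r)W_3,\qquad \nabla_{W_2}W_3=-\beta\operatorname{sech}(\gamma r)W_2 .
\end{align*}
From these, $K(W_1\wedge W_3)=-\alpha^2\operatorname{sech}^2(\gamma r)$ and $K(W_2\wedge W_3)=-\beta^2\operatorname{sech}^2(\gamma r)$. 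For $K(W_1\wedge W_2)$ one gets $-\alpha\beta\operatorname{sech}^2(\gamma r)$, arising from the $[W_1,W_2]=0$ term together with the products of the $W_3$-components. The frame diagonalizes the Ricci tensor, and summing the sectional curvatures gives the eigenvalues $-\alpha(\alpha+\beta)\operatorname{sech}^2$, $-\beta(\alpha+\beta)\operatorname{sech}^2$ and $-(\alpha^2+\beta^2)\operatorname{sech}^2$ (argument $\gamma r$ throughout).

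\emph{Item (3).} By \eqref{ttt:2.0}, $(x_1,x_2,0,x_3)(a,b,0,c)=(x_1+e^{\alpha x_3}a,\,x_2+e^{\beta x_3}b,\,0,\,x_3+c)$, so $H_6$ is a closed subgroup. Multiplying $(x_1,x_2,0,x_3)$ on the left of $q=(0,0,\tfrac{1}{\gamma}\tanh(\gamma r),-\tfrac{1}{\gamma}\ln\cosh(\gamma r))$ gives exactly $p(x_1,x_2,x_3)$, so $M_{6,r}=H_6\cdot q$ is homogeneous.

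For the absence of focal manifolds, use that the $H_6$-orbits foliate $\mathrm{Sol}_{m,n}^4$ as the level sets $ze^{-\gamma t}=c$, $c\in\mathbb R$. The function $c\mapsto r$ defined by $\sinh(\gamma r)=\gamma c$ is a bijection $\mathbb R\to\mathbb R$, so every orbit is a hypersurface and no orbit is singular.

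The only step needing care is the sign and bookkeeping in the connection coefficients and $K(W_1\wedge W_2)$. Everything else is routine.
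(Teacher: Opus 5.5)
Your proposal is correct and follows essentially the same route as the paper. It uses the same coordinate frame with $V_3=\sinh(\gamma r)E_3+E_4$, the same orthonormal frame $W_1=E_1$, $W_2=E_2$, $W_3=\tanh(\gamma r)E_3+\operatorname{sech}(\gamma r)E_4$ yielding the diagonal shape operator and the connection coefficients from which the curvatures are read off, and the same left-multiplication check for the $H_6$-orbit; your remark that the $H_6$-orbits are exactly the level sets $ze^{-\gamma t}=c$, all three-dimensional, is a slightly more explicit justification of the absence of focal manifolds than the paper gives.
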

\begin{proof}
(1) At any point  $p\in M_{6,r}$, we consider the frame field $\{V_i\}^3_{i=1}$ as follows:
$$
\begin{aligned}
V_1&=p_{x_1}=e^{-\alpha (x_3-\tfrac{1}{\gamma}\ln(\cosh (\gamma r)))}E_1,\ \
V_2=p_{x_2}=e^{-\beta(x_3-\tfrac{1}{\gamma}\ln(\cosh (\gamma r)))}E_2,\\
V_3&=p_{x_3}=\sinh(\gamma r)\, E_3+E_4.
\end{aligned}
$$
Thus, $N=-\operatorname{sech}(\gamma r)\, E_3 + \tanh(\gamma r)\, E_4$ is a unit normal vector field.

(2) Choose the orthonormal frame field  $\{W_i\}^3_{i=1}$ on $M_{6,r}$ as follows:
\begin{equation}\label{ccc:3.1}
W_1=E_1,\ \
W_2=E_2,\ \
W_3=\tfrac{V_3}{\cosh(\gamma r)}=\tanh(\gamma r)\, E_3+\operatorname{sech}(\gamma r)\, E_4.
\end{equation}
By using \eqref{ttt:2.3}, \eqref{ccc:3.1}, the Gauss and Weingarten formulae, we derive
\begin{equation}\label{ccc:3.000}
\begin{aligned}
&\nabla_{W_1}W_1=\operatorname{sech}(\gamma r)\alpha W_3,\ \ \ \
\nabla_{W_1}W_3=-\operatorname{sech}(\gamma r) \alpha W_1,\\
&\nabla_{W_2}W_2=\operatorname{sech}(\gamma r) \beta W_3,\ \ \ \
\nabla_{W_2}W_3=-\operatorname{sech}(\gamma r) \beta W_2, \\
&\nabla_{W_1}W_2=\nabla_{W_2}W_1=\nabla_{W_3}W_1=\nabla_{W_3}W_2=\nabla_{W_3}W_3=0,
\end{aligned}
\end{equation}
\begin{equation}\label{ccc:3.2}
AW_1=\alpha \tanh(\gamma r)\, W_1, \ AW_2=\beta\tanh(\gamma r)W_2,\
AW_3=\gamma\tanh(\gamma r)\, W_3.
\end{equation}
Then the principal curvatures of $M_{6,r}$ are $\alpha \tanh (\gamma r)$, $\beta \tanh (\gamma r)$ and $\gamma \tanh (\gamma r)$.

By \eqref{ccc:3.000}, we can obtain the sectional curvature and Ricci curvature
of $M_{6,r}$:
$$
\begin{aligned}
&K(W_1\wedge W_2)=-\alpha\beta\operatorname{sech}^2(\gamma r),\ \ K(W_1\wedge W_3)=-\alpha^2\operatorname{sech}^2(\gamma r),\\
&K(W_2\wedge W_3)=-\beta^2\operatorname{sech}^2(\gamma r),\ \
\mathrm{Ric}(W_1)=-\alpha(\alpha+\beta)\operatorname{sech}^2(\gamma r) W_1,\\
&\mathrm{Ric}(W_2)=-\beta(\alpha+\beta)\operatorname{sech}^2(\gamma r) W_2,\ \
\mathrm{Ric}(W_3)=-(\alpha^2+\beta^2)\operatorname{sech}^2(\gamma r) W_3.
\end{aligned}
$$

(3) From \eqref{ttt:2.0}, we know that $H_6$ is a closed subgroup of $\mathrm{Sol}_{m, n}^4$. Moreover, it follows from \eqref{ttt:2.0} that the hypersurface $M_{6,r}$ is an orbit of the subgroup $H_6$ which passes through the point $(0,0, \tfrac{1}{\gamma}\tanh(\gamma r),-\tfrac{1}{\gamma}\ln(\cosh(\gamma r)))$,  and it has no focal manifold.
\end{proof}

\begin{example}[cf. \cite{BM}]\label{exa:3.6}
For any given $r \in \mathbb{R}$, we consider the hypersurface $M_{7,r}$ defined by
$$
M_{7,r} := \{(x_1, x_2, x_3, r) \in \mathrm{Sol}_{m,n}^4 \mid x_1, x_2, x_3 \in \mathbb{R}\}.
$$
\end{example}
Let $H_7:= \{(x_1, x_2, x_3, 0) \in \mathrm{Sol}_{m,n}^4 \mid x_1, x_2, x_3 \in \mathbb{R}\}$. Then by \eqref{ttt:2.0}, we know that $H_7$ is a closed subgroup of $\mathrm{Sol}_{m, n}^4$. Thus, by using \eqref{ttt:2.0}, \eqref{ttt:2.1}, \eqref{ttt:2.3}, the Gauss and Weingarten formulae,
and the definition of Riemannian curvature tensor, we have the following proposition without proof.
\begin{proposition}\label{prop:3.6}
The hypersurface $M_{7,r}$ has the following properties:
\begin{enumerate}[\rm(1)]
    \item The hypersurface $M_{7,r}$ can also be presented as $\{(x,y,z,t)\in \mathrm{Sol}_{m,n}^4\mid t=r\}$. The unit normal vector field is given by $N = E_4$, where $E_4$ is defined by \eqref{ttt:2.1};
    \item It is a minimal hypersurface and has three distinct constant principal curvatures $\alpha$, $\beta$ and $\gamma$. It is also a flat hypersurface;
    \item It is an orbit of subgroup $H_7$ which passes through $(0, 0, 0, r)$. Therefore, the hypersurface $M_{7,r}$ is homogeneous, and it has no focal manifold.
        Moreover, for any given $r$, the hypersurface $M_{7,r}$ is congruent to $M_{7,0}$.
\end{enumerate}
\end{proposition}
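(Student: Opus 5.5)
The statement is Proposition \ref{prop:3.6}, stated without proof; I will verify its three items directly, following the pattern of Propositions \ref{prop:3.4} and \ref{prop:3.3}.

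\emph{Item (1) and the normal.} Parametrize $M_{7,r}$ by $p(x_1,x_2,x_3)=(x_1,x_2,x_3,r)$. Then
\[
p_{x_1}=\partial_x=e^{-\alpha r}E_1,\quad p_{x_2}=e^{-\beta r}E_2,\quad p_{x_3}=e^{-\gamma r}E_3
\]
by \eqref{ttt:2.1}. The tangent space is therefore spanned by $E_1,E_2,E_3$. Since the frame $\{E_i\}$ is orthonormal, $N=E_4$ is a unit normal, and $M_{7,r}$ is clearly the level set $t=r$.

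\emph{Item (2).} Take the orthonormal tangent frame $W_i=E_i$, $i=1,2,3$. By the Weingarten formula and \eqref{ttt:2.3},
\[
AE_1=-\tilde\nabla_{E_1}E_4=\alpha E_1,\quad AE_2=\beta E_2,\quad AE_3=\gamma E_3 .
\]
So the principal curvatures are the distinct numbers $\alpha,\beta,\gamma$, and their sum is $\alpha+\beta+\gamma=0$, giving minimality. For flatness I use the Gauss formula: the tangential parts of $\tilde\nabla_{E_i}E_j$ in \eqref{ttt:2.3} all vanish, because the only nonzero terms are $\tilde\nabla_{E_i}E_i$, which are multiples of $E_4$. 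Hence $\nabla_{W_i}W_j=0$ for all $i,j$, so the induced connection is flat and $R=0$. As a cross-check, the Gauss equation \eqref{kkk:2.7} gives
\[
K(E_i\wedge E_j)=\tilde K(E_i\wedge E_j)+\lambda_i\lambda_j .
\]
By Koszul, $\tilde K(E_i\wedge E_j)=-\lambda_i\lambda_j$ with $\lambda=(\alpha,\beta,\gamma)$, so this is consistent.

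\emph{Item (3).} From \eqref{ttt:2.0}, $H_7$ is closed under products and inverses, since products keep $t=0$, and it is closed as a set. For $h=(x_1,x_2,x_3,0)\in H_7$ we have
\[
h\cdot(0,0,0,r)=(x_1,x_2,x_3,r),
\]
so the orbit is exactly $M_{7,r}$ and $M_{7,r}$ is homogeneous. Now left-translate by $(0,0,0,-r)$, which is an isometry in $\operatorname{Iso}_o$. It maps $(x_1,x_2,x_3,r)$ to $(e^{-\alpha r}x_1,e^{-\beta r}x_2,e^{-\gamma r}x_3,0)$, so it carries $M_{7,r}$ onto $M_{7,0}$ and the two are congruent.

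For the absence of focal manifolds: the normal geodesics are $s\mapsto(x,y,z,r+s)$, since $\tilde\nabla_{E_4}E_4=0$. Consequently the parallel hypersurfaces are the $M_{7,r+s}$, all regular for every $s$, and the family $\{M_{7,s}\}$ foliates $\mathrm{Sol}^4_{m,n}$ with no singular orbit.

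No step is a real obstacle. The only point requiring care is confirming the sign conventions in $A=-\tilde\nabla N$, which fix $\alpha,\beta,\gamma$ rather than their negatives; with \eqref{ttt:2.3} this is immediate.
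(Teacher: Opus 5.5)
Your proposal is correct and follows essentially the same approach as the paper, which states this proposition without proof and intends exactly this verification: the tangent frame and normal from \eqref{ttt:2.1}, $A=\operatorname{diag}(\alpha,\beta,\gamma)$ and $\nabla_{E_i}E_j=0$ from \eqref{ttt:2.3} via the Weingarten and Gauss formulae, and the orbit and the congruence $M_{7,r}\cong M_{7,0}$ from the multiplication law \eqref{ttt:2.0} with a left translation by $(0,0,0,-r)$. Your argument for the absence of focal manifolds, via the geodesic integral curves of $E_4$, is also sound.
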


\subsection{Examples of homogeneous hypersurfaces of $\mathrm{Nil}^4$}\label{sect:3.3}~
\begin{example}\label{exa:3.11}
For any fixed $0\leq d<1$, we define the hypersurface
$$
M_{8,d}:=\big\{(x_1,x_2,\tfrac{d}{\sqrt{1-d^2}}x_3,x_3)\in \mathrm{Nil}^4\mid x_{1},x_{2},x_{3}\in\mathbb{R}\big\}.
$$
Put $H_8:=\{(x_1,x_2,\tfrac{d}{\sqrt{1-d^2}}x_3,x_3)\in \mathrm{Nil}^4\mid  x_{1},x_{2},x_{3}\in \mathbb R\}\subset\operatorname{Iso}_o(\mathrm{Nil}^4)=\mathrm{Nil}^4$.
\end{example}
\begin{proposition}\label{prop:3.11}
The hypersurface $M_{8,d}$ has the following properties:
\begin{enumerate}[{\rm(1)}]
\item The hypersurface $M_{8,d}$ can also be presented as $\{(x,y,z,t)\in \mathrm{Nil}^4\mid \sqrt{1-d^2}z-dt=0\}$.
The unit normal vector field is $N=-\sqrt{1-d^2}\, E_3+ d\, E_4$, where $\{E_i\}^4_{i=1}$ is defined
by \eqref{hhh:2.1};
\item It is a minimal hypersurface and has three distinct constant principal curvatures $-\frac{\sqrt{1+d^2}}{2}$, $0$ and  $\frac{\sqrt{1+d^2}}{2}$. The eigenvalues of the Ricci tensor of $M_{8,d}$ are $\frac{1-d^2}{2}$ and $\frac{d^2-1}{2}$ (multiplicity 2);
\item It is an orbit of the subgroup $H_8$ which passes through $(0,0,0,0)$. Therefore, the hypersurface $M_{8,d}$ is homogeneous, and it has no focal manifold.
    Moreover, for any fixed $d$, the orbits obtained from the action of $H_8$ on any point are congruent to each other.
\end{enumerate}
\end{proposition}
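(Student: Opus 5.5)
We will follow the same scheme as the proofs of Propositions \ref{prop:3.4} and \ref{prop:3.8}: compute coordinate tangent vectors, then an adapted orthonormal frame and the connection, and finally the shape operator and curvature, all via \eqref{hhh:2.1}--\eqref{hhh:2.3}.

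\emph{Tangent frame and normal (item (1)).} Put $k=\tfrac{d}{\sqrt{1-d^2}}$ and $p=(x_1,x_2,kx_3,x_3)$. Inverting \eqref{hhh:2.1} gives
\[
\partial_x=E_1,\qquad \partial_y=E_2-tE_1,\qquad \partial_z=E_3-tE_2+\tfrac{t^2}{2}E_1 .
\]
Hence
\[
V_1=p_{x_1}=E_1,\qquad V_2=p_{x_2}=E_2-x_3E_1,\qquad V_3=p_{x_3}=k\,\partial_z+\partial_t .
\]
Their span equals $\mathrm{span}\{E_1,E_2,kE_3+E_4\}$. Therefore $N=-\sqrt{1-d^2}\,E_3+dE_4$ is a unit normal. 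The implicit description $\sqrt{1-d^2}\,z-dt=0$ is immediate from the parametrization.

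\emph{Shape operator and curvature (item (2)).} We take the orthonormal frame
\[
W_1=E_1,\qquad W_2=E_2,\qquad W_3=dE_3+\sqrt{1-d^2}\,E_4 .
\]
We compute $AW_i=-(\tilde\nabla_{W_i}N)^\top$ from \eqref{hhh:2.3}; for instance $\tilde\nabla_{E_1}N=-\tfrac d2E_2$. We expect
\[
AW_1=\tfrac d2W_2,\qquad AW_2=\tfrac d2W_1+\tfrac12W_3,\qquad AW_3=\tfrac12W_2 .
\]
The matrix of $A$ then has trace $0$ and characteristic polynomial $-\lambda\bigl(\lambda^2-\tfrac{1+d^2}{4}\bigr)$. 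This gives the principal curvatures $0$ and $\pm\tfrac{\sqrt{1+d^2}}{2}$ and minimality. Next we compute $\nabla_{W_i}W_j$ as the tangential part of $\tilde\nabla_{W_i}W_j$, and from it the sectional curvatures $K(W_i\wedge W_j)$ and Ricci eigenvalues. As a cross-check we use the Gauss equation \eqref{kkk:2.7} with the ambient curvature, and we expect eigenvalues $\tfrac{1-d^2}{2}$ (once) and $\tfrac{d^2-1}{2}$ (twice).

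\emph{Homogeneity (item (3)).} By \eqref{hhh:2.0}, for elements of $H_8$ the $z$- and $t$-coordinates add, so the relation $z=kt$ is preserved and $H_8$ is a closed $3$-dimensional subgroup. Hence $M_{8,d}=H_8\cdot(0,0,0,0)$ is an orbit of a closed subgroup of $\operatorname{Iso}_o(\mathrm{Nil}^4)$. Since every orbit of $H_8$ is a regular hypersurface (an embedded copy of $\mathbb R^3$), there are no focal manifolds. For congruence of the orbits, let $y=(y_1,y_2,y_3,y_4)$ and take $h=(0,0,-y_3,-y_4)^{-1}$-type left translations. We left-translate $H_8\cdot y$ by an element with suitable $(z,t)$ coordinates, as in Proposition \ref{prop:3.8}, to bring it into the set $\{\sqrt{1-d^2}z-dt=0\}$. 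This works because the last two coordinates transform additively under \eqref{hhh:2.0}.

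The main obstacle is the bookkeeping in the non-diagonal Levi-Civita connection \eqref{hhh:2.3} when computing $\nabla_{W_i}W_j$ and the Ricci tensor, where signs must be tracked carefully.
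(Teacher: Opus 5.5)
Your proposal is correct and follows the paper's proof essentially step for step. You use the same coordinate tangent frame $V_i$ (your inversion of \eqref{hhh:2.1} reproduces the paper's $V_3$), the same orthonormal frame $W_1=E_1$, $W_2=E_2$, $W_3=dE_3+\sqrt{1-d^2}E_4$ with the same shape operator, and the same left-translation argument for congruence. Carrying out the connection computation you describe gives $K(W_1\wedge W_2)=K(W_1\wedge W_3)=\tfrac{1-d^2}{4}$ and $K(W_2\wedge W_3)=\tfrac{3(d^2-1)}{4}$, with vanishing off-diagonal Ricci terms, which confirms your expected eigenvalues.

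One small slip: the translation that works, and the one the paper uses, is by $(0,0,-y_3,-y_4)$ itself, not by its inverse. This is because under \eqref{hhh:2.0} the $(z,t)$-coordinates simply add.
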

\begin{proof}
(1) At any point  $p\in M_{8,d}$, we consider the frame field $\{V_i\}^3_{i=1}$ as follows:
$$
\begin{aligned}
V_1&=p_{x_1}=E_1, \ \ V_2=p_{x_2}=E_2-x_3\, E_1, \\
V_3&=p_{x_3}=\tfrac{d}{2\sqrt{1-d^2}}x^2_3\, E_1-\tfrac{d}{\sqrt{1-d^2}}x_3\, E_2+\tfrac{d}{\sqrt{1-d^2}}\, E_3+E_4.
\end{aligned}
$$
Hence $N=-\sqrt{1-d^2}E_3 + d E_4$ is a unit normal vector field on $M$.

(2) Now, we choose the orthonormal frame field  $\{W_i\}^3_{i=1}$ on $M_{8,d}$ as follows:
\begin{equation}\label{sss:3.1}
W_1=E_1, \ \ W_2=E_2,\ \
W_3=d\, E_3+\sqrt{1-d^2}E_4.
\end{equation}
It follows from \eqref{hhh:2.3}, \eqref{sss:3.1}, the Gauss and Weingarten formulae that
\begin{equation}\label{sss:3.000}
\begin{aligned}
\nabla_{W_1}W_1 &=0,      & \nabla_{W_1}W_2 &= \tfrac{\sqrt{1-d^2}}{2}W_3,
& \nabla_{W_1}W_3 &=-\tfrac{\sqrt{1-d^2}}{2}W_2,\\
\nabla_{W_2}W_1 &=\tfrac{\sqrt{1-d^2}}{2}W_3        ,       & \nabla_{W_2}W_2 &=0,
&\nabla_{W_2}W_3 &=-\tfrac{\sqrt{1-d^2}}{2}W_1, \\
\nabla_{W_3}W_1 &=-\tfrac{\sqrt{1-d^2}}{2}W_2,       & \nabla_{W_3}W_2&=\tfrac{\sqrt{1-d^2}}{2}W_1,
& \nabla_{W_3}W_3 &=0,
\end{aligned}
\end{equation}
\begin{equation}\label{sss:3.2}
AW_1=\tfrac{d}{2}W_2, \ \ AW_2=\tfrac{d}{2}W_1+\tfrac{1}{2}W_3,\ \
AW_3=\tfrac{1}{2}W_2.
\end{equation}
Thus, the principal curvatures of $M_{8,d}$ are $-\frac{\sqrt{1+d^2}}{2}$, $0$ and  $\frac{\sqrt{1+d^2}}{2}$.

By \eqref{sss:3.000}, we can calculate the sectional curvature and Ricci curvature of $M_{8,d}$ to obtain
$$
\begin{aligned}
&K(W_1\wedge W_2)=K(W_1\wedge W_3)=\tfrac{1-d^2}{4},\ \ K(W_2\wedge W_3)=\tfrac{3(d^2-1)}{4},\\
&\mathrm{Ric}(W_1)=\tfrac{1-d^2}{2} W_1,\ \ \mathrm{Ric}(W_2)=\tfrac{d^2-1}{2} W_2,\ \ \mathrm{Ric}(W_3)=\tfrac{d^2-1}{2} W_3.
\end{aligned}
$$

(3) From \eqref{hhh:2.0}, we can directly verify that $H_8$ is a closed subgroup of $\mathrm{Nil}^4$. Moreover, according to \eqref{hhh:2.0}, we further derive that the hypersurface $M_{8,d}$ is an orbit of the subgroup $H_8$ which passes through $(0,0,0,0)$,  and it has no focal manifold.

For any fixed $d$ and any point $(y_1,y_2,y_3,y_4)\in \mathrm{Nil}^4$,
put $H_8\cdot(0,0,0,0)=(x_1,x_2,\tfrac{d}{\sqrt{1-d^2}}x_3,x_3)=:M_3$ and
$H_8\cdot(y_1,y_2,y_3,y_4)=(x_1+y_1+x_3 y_2+\tfrac{x_3^2 y_3}{2},x_2+y_2+x_3 y_3,\tfrac{d}{\sqrt{1-d^2}}x_3+y_3,x_3+y_4)=:M_4$.
Similar to the proof of item (3) of Proposition \ref{prop:3.8}, one can verify that
up to a left translation by $(0,0,-y_3,-y_4)$, the hypersurfaces $M_3$ and $M_4$ are congruent.
\end{proof}

\begin{example}[cf. Remark 3.1 of \cite{DHCB}]\label{exa:3.12}
For any given $r \in \mathbb{R}$, we consider the hypersurface defined by
$$
M_{9,r} := \{(x_1, x_2, x_3, r) \in \mathrm{Nil}^4 \mid x_1, x_2, x_3 \in \mathbb{R}\}.
$$
\end{example}
Let $H_9:= \{(x_1, x_2, x_3, 0) \in \mathrm{Nil}^4 \mid x_1, x_2, x_3 \in \mathbb{R}\}$. Then it follows from \eqref{hhh:2.0} that $H_9$ is a closed subgroup of $\mathrm{Nil}^4$. Using \eqref{hhh:2.0}, \eqref{hhh:2.1}, \eqref{hhh:2.3}, the Gauss and Weingarten formulae, and the definition of Riemannian curvature tensor, we have the following proposition without proof.
\begin{proposition}\label{prop:3.12}
The hypersurface $M_{9,r}$ has the following properties:
\begin{enumerate}[\rm(1)]
    \item The hypersurface $M_{9,r}$ can also be presented as $\{(x,y,z,t)\in \mathrm{Nil}^4\mid t=r\}$. The unit normal vector field is given by $N = E_4$, where $E_4$ is defined by \eqref{hhh:2.1};
    \item It is a minimal hypersurface and has three distinct constant principal curvatures $-\tfrac{1}{\sqrt{2}}$, $0$ and $\tfrac{1}{\sqrt{2}}$. It is also a flat hypersurface;
    \item It is an orbit of subgroup $H_9$ which passes through $(0, 0, 0, r)$. Hence, $M_{9,r}$ is a homogeneous hypersurface, and it has no focal manifold.
        Moreover, for any given $r$, the hypersurface $M_{9,r}$ is congruent to $M_{9,0}$.
\end{enumerate}
\end{proposition}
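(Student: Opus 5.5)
The statement to be proved is Proposition \ref{prop:3.12}. I would prove it by direct computation in the left invariant frame \eqref{hhh:2.1}, in the same way as Propositions \ref{prop:3.11} and \ref{prop:3.8}, with the parameter $d$ replaced by the special choice $N=E_4$.

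\textbf{Item (1).} Parametrize $p=(x_1,x_2,x_3,r)$. From \eqref{hhh:2.1} we get $p_{x_1}=\partial_x$, $p_{x_2}=\partial_y$ and $p_{x_3}=\partial_z$. At $t=r$ these are linear combinations of $E_1,E_2,E_3$, and they span the same space; for instance $\partial_y=E_2-rE_1$. Hence $E_1,E_2,E_3$ form an orthonormal tangent frame. Since $\{E_i\}$ is orthonormal for \eqref{zx6.3}, $N=E_4$ is a unit normal. The description $\{t=r\}$ is then immediate.

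\textbf{Item (2).} Take $W_i=E_i$ for $i=1,2,3$. Using $\tilde\nabla_X E_4=-AX$ together with \eqref{hhh:2.3}, I obtain
\[
AE_1=\tfrac12E_2,\qquad AE_2=\tfrac12E_1+\tfrac12E_3,\qquad AE_3=\tfrac12E_2.
\]
This matrix has trace $0$, so $M_{9,r}$ is minimal. Its characteristic polynomial is $\lambda(\lambda^2-\tfrac12)$, which gives the principal curvatures $0$ and $\pm\tfrac1{\sqrt2}$.

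For flatness, take the tangential parts in \eqref{hhh:2.3}. These give $\nabla_{E_i}E_j=0$ for all $i,j\le3$. The frame $\{E_1,E_2,E_3\}$ is therefore parallel on $M_{9,r}$. Equivalently, the induced metric at $t=r$ has constant coefficients in $(x,y,z)$, so $R=0$.

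\textbf{Item (3).} By \eqref{hhh:2.0}, the product of two elements of the form $(\ast,\ast,\ast,0)$ is again of that form, and the inverse of $(x,y,z,0)$ is $(-x,-y,-z,0)$. So $H_9$ is a closed subgroup. Moreover
\[
(x_1,x_2,x_3,0)(0,0,0,r)=(x_1,x_2,x_3,r),
\]
so $H_9\cdot(0,0,0,r)=M_{9,r}$, and $M_{9,r}$ is homogeneous.

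For congruence, left translation by $(0,0,0,-r)$ is an isometry. It preserves the $t$-slices up to the shift $t\mapsto t-r$, so it maps $M_{9,r}$ onto $M_{9,0}$.

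The hypersurfaces $M_{9,s}$ foliate $\mathrm{Nil}^4$, and they are the level sets of $t$. The integral curves of $E_4=\partial_t$ are unit-speed geodesics, because $\tilde\nabla_{E_4}E_4=0$. Hence the parallel hypersurface of $M_{9,r}$ at distance $s$ is $M_{9,r+s}$. This hypersurface is always regular, so there are no focal manifolds.

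\textbf{Main obstacle.} The only delicate step is keeping track of the signs in $\tilde\nabla_{E_i}E_4$, which determine the matrix of $A$. The rest is routine.
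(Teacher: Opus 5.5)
Your proposal is correct and takes the route the paper indicates: the paper states this proposition without proof and cites exactly \eqref{hhh:2.0}, \eqref{hhh:2.1}, \eqref{hhh:2.3}, the Gauss--Weingarten formulae and the curvature tensor. Your computations all check out: the shape operator $AE_1=\tfrac12E_2$, $AE_2=\tfrac12(E_1+E_3)$, $AE_3=\tfrac12E_2$ with eigenvalues $0,\pm\tfrac{1}{\sqrt2}$; the vanishing of all tangential parts $\nabla_{E_i}E_j$, which gives flatness; the orbit identity $(x_1,x_2,x_3,0)(0,0,0,r)=(x_1,x_2,x_3,r)$; and the congruence via left translation by $(0,0,0,-r)$.
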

%

\section{ Proof of Theorem \ref{thm:1.2}}\label{sect:4}
In order to prove Theorem \ref{thm:1.2}, we first give the following theorem.

\begin{theorem}\label{thm:1.1}
Let $M$ be a hypersurface of $\mathrm{Sol}_1^4$ with constant angle functions $a$, $b$, $c$ and $d$. Then, up to isometries of $\mathrm{Sol}_1^4$, one of the following two cases occurs:
\begin{enumerate}[\rm(1)]
  \item $M$ is an open part of $M_{1,r}$ for some $r\geq0$;
  \item $M$ is an open part of $M_{2,0}$.
\end{enumerate}
\end{theorem}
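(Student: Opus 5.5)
The plan is to turn the hypothesis ``constant angle functions'' into a purely Lie-algebraic condition on the left-invariant unit vector field $N=aE_1+bE_2+cE_3+dE_4$. Since $a,b,c,d$ are constant, $N$ extends to a left-invariant unit vector field on $\mathrm{Sol}_1^4$, and so does its orthogonal complement $\mathcal D=N^\perp$. The latter is a left-invariant rank-three distribution, spanned by the fields $T_1,T_2,T_3$ of \eqref{kkk:2.5}, and $M$ is an integral manifold of it. Frobenius then forces $\mathcal D$ to be involutive. Equivalently, $\mathfrak h=\mathrm{span}\{T_1,T_2,T_3\}$ must be a three-dimensional subalgebra of $\mathrm{span}\{E_1,\dots,E_4\}$. (Equivalently, one may use the symmetry of $A$, with $A$ computed from $X(a_i)=0$, i.e.\ $g(AX,E_i)=g(N,\tilde\nabla_XE_i)$, via \eqref{kkk:2.3}.) Concretely, I would impose $g([T_i,T_j],N)=0$ for $1\le i<j\le 3$ using the brackets \eqref{kkk:2.2}. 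This gives a system of polynomial equations in $a,b,c,d$ subject to $a^2+b^2+c^2+d^2=1$.

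The key step, and the main computational obstacle, is solving this system. Since $\mathfrak h$ has codimension one, it is easiest to use the condition that $\mathfrak h$ is a subalgebra if and only if the one-form $\theta=g(N,\cdot)$ satisfies $d\theta\wedge\theta=0$, where $d\theta(X,Y)=-\theta([X,Y])$. Writing $\theta=a\theta^1+b\theta^2+c\theta^3+d\theta^4$ in the dual coframe, the structure equations from \eqref{kkk:2.2} are
\[
d\theta^1=\theta^1\wedge\theta^4,\quad d\theta^2=-\theta^2\wedge\theta^4,\quad d\theta^3=-\theta^1\wedge\theta^2,\quad d\theta^4=0.
\]
Expanding $d\theta\wedge\theta=0$, I expect the coefficients to force $c=0$, by looking at the $\theta^1\wedge\theta^2\wedge\theta^4$ and $\theta^1\wedge\theta^2\wedge\theta^3$ components. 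They should also force $ab=0$, from the $\theta^1\wedge\theta^2\wedge\theta^4$ component, which then reads $-ab+ab$ plus terms in $c$; this needs care. So the solutions should be $N=aE_1+dE_4$ or $N=bE_2+dE_4$. I would double-check that no solution with $c\neq0$ survives, since $[E_1,E_2]=E_3$ is the only source of $c$-terms.

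Next I would normalize using the isometries \eqref{fff:2.4}. The differential of $\phi_4$ interchanges the roles of $E_1$ and $E_2$ and sends $E_4\mapsto -E_4$. Hence the case $N=bE_2+dE_4$ reduces to $N=aE_1+dE_4$. The maps $\phi_1,\phi_2$ and the choice of orientation of $N$ let me assume $a\le0$ and $d\ge0$. If $a=0$, then $N=E_4$. Otherwise write $a=-\operatorname{sech} r$ and $d=\tanh r$ with $r\ge0$; this includes $d=0$, $r=0$.

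Finally, I would identify $M$. Integral manifolds of a left-invariant involutive distribution are open pieces of left cosets $pK$, where $K$ is the connected subgroup with Lie algebra $\mathfrak h$. Any two such leaves differ by a left translation, which is an isometry. By Proposition \ref{prop:3.1}(1) and Proposition \ref{prop:3.2}(1), $M_{1,r}$ and $M_{2,0}$ are connected hypersurfaces whose unit normals are exactly $-\operatorname{sech} r\,E_1+\tanh r\,E_4$ and $E_4$. So they are leaves of the same distributions, and $M$ is, up to a left translation, an open part of $M_{1,r}$ or of $M_{2,0}$.
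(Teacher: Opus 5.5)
Your proposal is correct, but it takes a different route from the paper.

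\textbf{What the paper does.} It computes $AT_1,AT_2,AT_3$ explicitly from \eqref{kkk:2.3}. It reads off the three polynomial equations \eqref{kkk:4.2}--\eqref{kkk:4.4} from the symmetry of $A$. After reducing to $N=aE_1+dE_4$ with $\phi_4$ and $\phi_1$, it builds a commuting frame $X_1,X_2,X_3$ and integrates to explicit coordinates, recovering the parametrization of $M_{1,r}$. The case $d=1$ is handled separately.

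\textbf{What your route buys, and costs.} Your integrability condition is equivalent to the symmetry of $A$, since $g(AX,Y)-g(X,AY)=g(N,[X,Y])$. The Maurer--Cartan formulation is quicker than expanding $A$ in the $T_i$ frame. Replacing the ODE integration by the fact that the leaves of a left-invariant involutive distribution are the cosets $pK$ explains structurally why all hypersurfaces with a given constant normal are congruent by left translations. The cost is that you do not derive the examples from scratch: you rely on Propositions \ref{prop:3.1}(1) and \ref{prop:3.2}(1) for their normals. You also need one more line showing that $M_{1,r}$ and $M_{2,0}$ are \emph{entire} leaves, not just open pieces of leaves; otherwise a left translate of $M\subset pK$ need not land inside them. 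Either of these suffices:
\begin{itemize}
\item they are closed, connected integral manifolds;
\item $H_1\cdot p=p\,(p^{-1}H_1p)$ is a left coset of a connected subgroup with Lie algebra $N^\perp$, which must therefore be $K$.
\end{itemize}

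\textbf{A correction to your sketch of the key step.} The coefficient of $\theta^1\wedge\theta^2\wedge\theta^4$ in $d\theta\wedge\theta$ is not ``$-ab+ab$ plus terms in $c$''. The two contributions $a\,\theta^1\wedge\theta^4\wedge b\,\theta^2$ and $-b\,\theta^2\wedge\theta^4\wedge a\,\theta^1$ both equal $-ab\,\theta^1\wedge\theta^2\wedge\theta^4$. The full result is
\[
d\theta\wedge\theta=-c^2\,\theta^1\wedge\theta^2\wedge\theta^3-(2ab+cd)\,\theta^1\wedge\theta^2\wedge\theta^4-ac\,\theta^1\wedge\theta^3\wedge\theta^4+bc\,\theta^2\wedge\theta^3\wedge\theta^4 .
\]
So the first coefficient gives $c=0$, and then the second gives $ab=0$, exactly as you predicted. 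Had the coefficient really been $-ab+ab$, nothing would force $ab=0$, and the case $N=aE_1+bE_2+dE_4$ with $ab\neq0$ would survive. So do carry out this computation rather than leave it as an expectation.
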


\begin{proof}
Let $M$ be a hypersurface of $\mathrm{Sol}_1^4$ with the unit normal vector field $N = aE_1 + bE_2 + cE_3 + dE_4$, where $\{E_i\}_{i=1}^4$ is defined by \eqref{kkk:2.1} and $a$, $b$, $c$, $d$ are constants. It follows from $a^2+b^2+c^2+d^2=1$ that $|d|\leq1$. Moreover, up to changing the sign of the unit normal vector field, we can assume that $d\geq0$.

Since $0\leq d\leq1$ is a constant, we can consider the following two cases:

\vskip2mm
{\bf Case I.} $0\leq d<1$ on $M$.

In this case, we choose the orthonormal frame field $\{T_i\}_{i=1}^3$ as defined by \eqref{kkk:2.5}.
According to \eqref{kkk:2.3}, the Weingarten formula and $a$, $b$, $c$, $d$ are constants, we obtain
\begin{equation*}
\begin{aligned}
AT_1&=-\tilde{\nabla}_{T_1}N=-\tilde{\nabla}_{bE_1-aE_2+dE_3-cE_4}(aE_1+bE_2+cE_3+dE_4)\\
&=-ab\tilde{\nabla}_{E_1}E_1-b^2\tilde{\nabla}_{E_1}E_2
-bc\tilde{\nabla}_{E_1}E_3-bd\tilde{\nabla}_{E_1}E_4
+a^2\tilde{\nabla}_{E_2}E_1+ab\tilde{\nabla}_{E_2}E_2\\
& \ \ \
+ac\tilde{\nabla}_{E_2}E_3+ad\tilde{\nabla}_{E_2}E_4
-ad\tilde{\nabla}_{E_3}E_1-bd\tilde{\nabla}_{E_3}E_2
-cd\tilde{\nabla}_{E_3}E_3-d^2\tilde{\nabla}_{E_3}E_4\\
& \ \ \
+ac\tilde{\nabla}_{E_4}E_1+bc\tilde{\nabla}_{E_4}E_2
+c^2\tilde{\nabla}_{E_4}E_3+cd\tilde{\nabla}_{E_4}E_4\\
&=\tfrac{1}{2}(ac+bd)E_1+\tfrac{1}{2}(bc+3ad)E_2-\tfrac{1}{2}(a^2+b^2)E_3-2abE_4\\
&=2a(bc-ad)T_1+\tfrac{1}{2}a(a^2-3 b^2+c^2-3d^2)T_2+\tfrac{1}{2}(5a^2b+4acd +b(b^2+c^2+d^2))T_3.
\end{aligned}
\end{equation*}
Similar calculations give that
\begin{equation*}
\begin{aligned}
AT_2& = \tfrac{1}{2}(a^3+4bcd +a(b^2+c^2-3d^2))T_1+d(a^2-b^2+c^2-d^2)T_2 \\
&\ \ \ \  +\tfrac{1}{2}(c(a^2+b^2+c^2)+4abd +5cd^2)T_3,\\
AT_3 &= \tfrac{1}{2}(a^2b+4acd+b(b^2-3c^2+d^2))T_1-\tfrac{1}{2}c(a^2-3b^2+c^2-3d^2)T_2\\
&\ \ \ \  +(-2 abc+d^3+d(a^2+b^2-c^2))T_3.
\end{aligned}
\end{equation*}
Then, it follows from the symmetry of the shape operator $A$ that
\begin{align}
-2b(cd+ab)&=0,\label{kkk:4.2}\\
2b(a^2+c^2)&=0,\label{kkk:4.3}\\
c(a^2+c^2+d^2-b^2)+2abd&=0.\label{kkk:4.4}
\end{align}

If $b\neq0$ on $M$, from \eqref{kkk:4.3} we derive that $a=c=0$.
If $b=0$ on $M$, from \eqref{kkk:4.4} and $a^2+c^2+d^2=1$, we get $c=0$.
Therefore, we know that the solutions to \eqref{kkk:4.2}--\eqref{kkk:4.4} are $a=c=0$ or $b=c=0$.

If $a=c=0$ holds, then we have $-N=-b E_2-d E_4$.
By using \eqref{fff:2.4}, we know that the unit normal vector field of the hypersurface $\phi_4(M)$ is $-b E_1+d E_4$. Therefore, up to isometries of $\mathrm{Sol}_1^4$, we only need to consider the case $b=c=0$.

Since $b=c=0$, we have $N=aE_1+dE_4$, where $a^2+d^2=1$.
Up to the action of $\phi_1\in D_4$ defined by \eqref{fff:2.4},
we can always assume that $a=-\sqrt{1-d^2}$.

We consider the orthonormal frame field $\{W_i\}_{i=1}^3$ on $M$ defined by
\begin{equation}\label{kkk:4.5}
W_1=d E_1+\sqrt{1-d^2} E_4,\ \ W_2=E_2,\ \ W_3=E_3.
\end{equation}
By applying \eqref{kkk:2.3}, \eqref{kkk:2.6}, \eqref{kkk:4.5}, and $d$ is a constant, we obtain
\begin{equation}\label{kkk:4.7}
\begin{aligned}
\nabla_{W_1}W_1 &=0, & \nabla_{W_1}W_2 &= \tfrac{d}{2}W_3, &\nabla_{W_1}W_3 &=-\tfrac{d}{2}W_2,\\
\nabla_{W_2}W_1 &=\sqrt{1-d^2} W_2-\tfrac{d}{2}W_3, & \nabla_{W_2}W_2 &=-\sqrt{1-d^2} W_1,
&\nabla_{W_2}W_3 &=\tfrac{d}{2}W_1,\\
\nabla_{W_3}W_1&=-\tfrac{d}{2}W_2, & \nabla_{W_3}W_2&=\tfrac{d}{2}W_1,
&\nabla_{W_3}W_3&=0,
\end{aligned}
\end{equation}
\begin{equation}\label{zx:1.8}
AW_1 =dW_1,       \ \ AW_2= -dW_2-\tfrac{\sqrt{1-d^2}}{2}W_3,       \ \ AW_3 = -\tfrac{\sqrt{1-d^2}}{2}W_2.
\end{equation}

It follows from \eqref{kkk:4.7} that
\begin{equation}\label{kkk:4.8}
[W_1,W_2]=-\sqrt{1-d^2} W_2 +dW_3,\ \  [W_1,W_3]=0,\ \ [W_2,W_3]=0.
\end{equation}
Then we can check that all the Gauss and Codazzi equations are satisfied.

From \eqref{kkk:2.1} and \eqref{kkk:4.5}, we deduce
\begin{equation}\label{kkk:4.6}
W_1(x)=e^t d, \ \ W_2(x)=W_3(x)=0, \ \ W_1(t)=\sqrt{1-d^2},\ \ W_2(t)=W_3(t)=0.
\end{equation}

Now, we define a new frame field $\{X_i\}_{i=1}^3$ on $M$ as follows:
\begin{equation}\label{kkk:4.9}
X_1=W_1,\ \ X_2=e^t\, W_2-x\, W_3,\ \ X_3=W_3.
\end{equation}
By using \eqref{kkk:4.8}, \eqref{kkk:4.6} and \eqref{kkk:4.9}, we derive that $[X_i,X_j]=0$, $1\leq i<j\leq3$.

Then, we can locally identify $M$ with an open subset $\Omega$ of $\mathbb R^3$ and express the
hypersurface $M$ by an immersion
$$
\begin{aligned}
\Phi:\mathbb R^3\supset\Omega& \longrightarrow \mathrm{Sol}_1^4 ,\\
(u,v,w)&\longmapsto(x(u,v,w),y(u,v,w),z(u,v,w),t(u,v,w)),
\end{aligned}
$$
such that
\begin{equation}\label{kkk:4.10}
\begin{aligned}
&\mathrm{d} \Phi(\partial_u)
=\left(x_u,y_u,z_u,t_u\right)=X_1,\\
&\mathrm{d} \Phi(\partial_v)
=\left(x_v,y_v,z_v,t_v\right)=X_2,\\
&\mathrm{d} \Phi(\partial_w)
=\left(x_w,y_w,z_w,t_w\right)=X_3.
\end{aligned}
\end{equation}
According to \eqref{kkk:2.1}, \eqref{kkk:4.5} and \eqref{kkk:4.9}, we also have
\begin{equation}\label{kkk:4.11}
X_1=(d e^t,0,0,\sqrt{1-d^2}),\ \ X_2=(0,1,0,0),\ \  X_3=(0,0,1,0).
\end{equation}
The equations \eqref{kkk:4.10} and \eqref{kkk:4.11} show that $x$ and $t$ depend only on $u$, $y$ depends only on $v$, and $z$ depends only on $w$.

It follows from \eqref{kkk:4.10} and \eqref{kkk:4.11} that $y=v+c_1$, $z=w+c_2$, $t=\sqrt{1-d^2}u+c_3$, where $c_1$, $c_2$ and $c_3$ are constants. Thus, we have
$u=\tfrac{t-c_3}{\sqrt{1-d^2}}$ and
\begin{equation}\label{kkk:4.12}
u_t=\tfrac{1}{\sqrt{1-d^2}}.
\end{equation}
In the following, we use $t$ instead of $u$ as a local coordinate.

Then, from  \eqref{kkk:4.10}, \eqref{kkk:4.11} and \eqref{kkk:4.12}, we derive that $x_t=x_uu_t=\tfrac{d}{\sqrt{1-d^2}}e^t$, which implies that $x=\tfrac{d}{\sqrt{1-d^2}}e^t+c_4$, where $c_4$ is a constant.

Using $y$ and $z$ as the new local coordinates, we obtain $\Phi(t,y,z)=(\tfrac{d}{\sqrt{1-d^2}}e^t+c_4,y,z,t)$.
After a left translation by $(-c_4, 0, 0, 0)$, we get
$\Phi(t,y,z)=(\tfrac{d}{\sqrt{1-d^2}}e^t,y,z-c_4y,t)$.
Denoting $z-c_4y$ by $\tilde{z}$, we have
\begin{equation*}
\Phi(t,y,\tilde{z})=(\tfrac{d}{\sqrt{1-d^2}}e^t,y,\tilde{z},t).
\end{equation*}
Since $0\leq d< 1$, we may assume that $d=\tanh r$ for some constant $r\geq 0$. By taking the
reparametrization $x_1=y$, $x_2=\tilde{z}$ and $x_3=t+\ln(\cosh r)$, we get
$$
\Phi(x_1,x_2,x_3)=(e^{x_3}\tanh r,x_1,x_2,x_3-\ln(\cosh r)).
$$
This indicates that, up to isometries of $\mathrm{Sol}_1^4$, $M$ is an open part of $M_{1,r}$ for some $r\geq0$.

\vskip2mm
{\bf Case II.} $d=1$ on $M$.

In this case, the unit normal vector field is $N=E_4$, and $TM=\text{span}\{E_1, E_2, E_3\}$, where $\{E_i\}_{i=1}^4$ is defined by \eqref{kkk:2.1}. It follows from \eqref{kkk:2.0}, \eqref{kkk:2.1} and
\eqref{kkk:2.2} that $M$ is an open part of $\{(x_1,x_2,x_3,r) \in \mathrm{Sol}_1^4 \mid x_1,x_2,x_3 \in \mathbb{R}\}$ for some $r \in \mathbb{R}$, which is congruent to $M_{2,0}$.

\vskip1mm
Conversely, from Propositions \ref{prop:3.1} and \ref{prop:3.2}, we know that the hypersurfaces $M_{1,r}$ and $M_{2,0}$ have constant angle functions.

In conclusion, we have completed the proof of Theorem \ref{thm:1.1}.
\end{proof}

Now, we give the proof of Theorem \ref{thm:1.2}.

\noindent
{\bf Proof of Theorem \ref{thm:1.2}.}

Let $M$ be a homogeneous hypersurface of $\mathrm{Sol}_1^4$, then $M$ is an orbit of some closed subgroup $ G \subset \operatorname{Iso}_o(\mathrm{Sol}_1^4)=\mathrm{Sol}_1^4$. Let $N$ be a unit normal vector field of $M$ and fix a point $p_0\in M$. Since $M$ is homogeneous, for any $p\in M$, there exists an isometry $\phi \in \mathrm{Sol}_1^4$ such that $\phi(M)=M$ and $\phi(p_0)=p$. Then we know that $N_p = \pm \mathrm{d}\phi_{p_0}(N_{p_0})$. Since $E_i$ are left invariant vector fields on $\mathrm{Sol}_1^4$ for $i=1,2,3,4$, we have $\mathrm{d}\phi_{p_0}(E_i|_{p_0})=E_i|_p, i=1,2,3,4$.

According to the definition of angle functions, we get
$$
\begin{aligned}
a(p) &= g(N_p, E_1|_{p}) = g(\pm \mathrm{d}\phi_{p_0}(N_{p_0}), \mathrm{d}\phi_{p_0}(E_1|_{p_0})) = \pm a(p_0), \\
b(p) &= g(N_p, E_2|_{p}) = g(\pm \mathrm{d}\phi_{p_0}(N_{p_0}), \mathrm{d}\phi_{p_0}(E_2|_{p_0})) = \pm b(p_0), \\
c(p) &= g(N_p, E_3|_{p}) = g(\pm \mathrm{d}\phi_{p_0}(N_{p_0}), \mathrm{d}\phi_{p_0}(E_3|_{p_0})) = \pm c(p_0), \\
d(p) &= g(N_p, E_4|_{p}) = g(\pm \mathrm{d}\phi_{p_0}(N_{p_0}), \mathrm{d}\phi_{p_0}(E_4|_{p_0})) = \pm d(p_0).
\end{aligned}
$$
These equations together with the connectedness of $M$
show that the angle functions $a$, $b$, $c$ and $d$ are constants on $M$.
Then by Theorem \ref{thm:1.1}, up to isometries of $\mathrm{Sol}_1^4$, we know that $M$ is either $M_{1,r}$ for some $ r\geq0$ or $M_{2,0}$.

Conversely, from Propositions \ref{prop:3.1} and \ref{prop:3.2}, we know that both the hypersurfaces $M_{1,r}$ for some $ r\geq0$ and $M_{2,0}$ are homogeneous.

In conclusion, we have completed the  proof of Theorem \ref{thm:1.2}. \qed

\section{ Proof of Theorem \ref{thm:1.4}}\label{sect:5}

In order to prove Theorem \ref{thm:1.4}, we give the following theorem.

\begin{theorem}\label{thm:1.3}
Let $M$ be a hypersurface of $\mathrm{Sol}_{m,n}^4$ with constant angle functions $a$, $b$, $c$ and $d$. Then up to isometries of $\mathrm{Sol}_{m,n}^4$, one of the following five cases occurs:
\begin{enumerate}[\rm(1)]
\item $M$ is an open part of $M_{3,r}$ for some $r\geq0$;
\item $M$ is an open part of $M_{4,d}$ for some $0\leq d<1$;
\item $M$ is an open part of $M_{5,r}$ for some $r\geq0$;
\item $M$ is an open part of $M_{6,r}$ for some $r\geq0$;
\item $M$ is an open part of $M_{7,0}$.
\end{enumerate}
\end{theorem}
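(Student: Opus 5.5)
I follow the same scheme as the proof of Theorem \ref{thm:1.1}. Write $N=aE_1+bE_2+cE_3+dE_4$ with $a,b,c,d$ constant and, after possibly replacing $N$ by $-N$, assume $d\ge 0$. I split into Case I ($0\le d<1$) and Case II ($d=1$).

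In Case II we have $N=E_4$, so $TM=\mathrm{span}\{E_1,E_2,E_3\}$. By \eqref{ttt:2.2} this distribution is integrable with leaves $\{t=\text{const}\}$. Hence $M$ is an open part of some $M_{7,r}$, which is congruent to $M_{7,0}$ by Proposition \ref{prop:3.6}.

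In Case I I use the tangent frame $\{T_i\}$ from \eqref{kkk:2.5}. Since the angle functions are constant, \eqref{ttt:2.3} and the Weingarten formula give $AT_i=-\tilde\nabla_{T_i}N$ explicitly. The connection is diagonal: $\tilde\nabla_{E_i}E_i=\lambda_iE_4$ and $\tilde\nabla_{E_i}E_4=-\lambda_iE_i$, with $(\lambda_1,\lambda_2,\lambda_3)=(\alpha,\beta,\gamma)$. From this one gets
\[
-\tilde\nabla_{X}N=\textstyle\sum_{i\le 3}\lambda_i\big(N_iX_4-N_4X_i\big)E_i \;+\;\dots
\]
so $A$ has a simple form in the $E$-basis. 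Symmetry of $A$, that is $g(AT_i,T_j)=g(T_i,AT_j)$, then gives three polynomial equations in $a,b,c,d$ with coefficients linear in $\alpha,\beta,\gamma$. I expect these to reduce to products such as $ab(\alpha-\beta)=0$, $ac(\alpha-\gamma)=0$ and $bc(\beta-\gamma)=0$, possibly with factors involving $d$.

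Because $\alpha,\beta,\gamma$ are distinct, at most one of $a,b,c$ is nonzero. So $N=aE_1+dE_4$, $N=bE_2+dE_4$, or $N=cE_3+dE_4$ (the case $a=b=c=0$ would force $d=1$). Using the coordinate reflections, I then fix the sign of the nonzero component to be negative. Each subcase is integrated exactly as in Theorem \ref{thm:1.1}:
\begin{enumerate}
\item Take the orthonormal frame made of the two remaining $E_j$ together with $dE_k+\sqrt{1-d^2}E_4$.
\item Check that suitable rescalings $e^{\pm\lambda t}E_j$ give a commuting frame $\{X_i\}$.
\item Write down coordinates in which the relevant coordinate $x$, $y$ or $z$ is a function of $t$ only, namely $\lambda x_t=\tfrac{d}{\sqrt{1-d^2}}\,e^{\lambda t}$.
\item Integrate and remove the additive constant by a left translation.
\end{enumerate}
The resulting surfaces are matched to the examples as follows:
\begin{itemize}
\item With $d=\tanh(-\alpha r)$, the first subcase gives $M_{5,r}$.
\item With $d=\tanh(\gamma r)$, the third subcase gives $M_{6,r}$.
\item In the middle subcase with $m\ne n$ (so $\beta\neq0$), $d=\tanh(|\beta| r)$ gives $M_{3,r}$.
\end{itemize}
The signs of $\alpha,\beta,\gamma$ require care when choosing $r\ge0$. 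In particular, the sign of $\beta$ is undetermined, which is why $|\beta|$ appears; I handle it with the $y$-reflection combined with $N\mapsto -N$.

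The main obstacle I expect is the middle subcase when $m=n$, because then $\beta=0$. In that case $\tilde\nabla E_2=0$ and $E_2$ carries no exponential weight, so the integration gives $y$ affine in $t$ rather than exponential, namely $\sqrt{1-d^2}\,y=dt+\text{const}$. After a left translation in $y$ this is exactly $M_{4,d}$.

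Also when $m=n$ the extra symmetry $\varphi_3$ swaps $\alpha$ and $\gamma$. It therefore identifies the $a$- and $c$-subcases, which means $M_{5,r}$ and $M_{6,r}$ overlap there. This does not affect the list, but I would note it.

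I would also double-check the symmetry equations for possible extra solutions in which $d$-dependent factors vanish, for instance via $\alpha+\beta+\gamma=0$. To do this, I would write the equations out in full before concluding that two of $a,b,c$ vanish.

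For the converse direction, Propositions \ref{prop:3.4}, \ref{prop:3.8}, \ref{prop:3.3}, \ref{prop:3.5} and \ref{prop:3.6} show that all of $M_{3,r}$, $M_{4,d}$, $M_{5,r}$, $M_{6,r}$ and $M_{7,0}$ have constant angle functions.
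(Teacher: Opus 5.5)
Your architecture is the paper's: normalize $d\ge 0$, split into $d<1$ and $d=1$, impose symmetry of $A$ in the frame $\{T_i\}$, reduce to ``at most one of $a,b,c$ is nonzero'', then integrate three subcases, splitting off $\beta=0$ to get $M_{4,d}$. The integration part is fine.

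The gap is the one non-routine step. You never derive the symmetry conditions; you only expect them to collapse to $ab(\alpha-\beta)=ac(\alpha-\gamma)=bc(\beta-\gamma)=0$. Moreover, the formula you display for $-\tilde\nabla_XN$ is wrong. Since $\tilde\nabla_{E_4}E_i=0$ for all $i$, no $X_4$-term can occur, and the sign of the $N_4X_i$-term is reversed. Writing $X=\sum_{i=1}^4X_iE_i$ and $(\lambda_1,\lambda_2,\lambda_3)=(\alpha,\beta,\gamma)$, the correct expression is
\[
AX=-\tilde\nabla_XN=d\sum_{i=1}^{3}\lambda_iX_iE_i-g(u,X)\,E_4,\qquad u:=a\alpha E_1+b\beta E_2+c\gamma E_3 .
\]
The resulting equations \eqref{ttt:4.2}--\eqref{ttt:4.4} are cubic, with mixed terms such as $bcd(\gamma-\beta)$; they are not products of the form you anticipate. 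The paper needs a fairly delicate case analysis (the functions $f_i$) to extract the conclusion. Your expected conclusion is true, but as written the key lemma is asserted, not proved.

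It can be closed quickly, and more transparently than in the paper. The formula above gives, for $X,Y\in TM$,
\[
g(AX,Y)-g(X,AY)=g(u,Y)g(E_4,X)-g(u,X)g(E_4,Y).
\]
So $A$ is symmetric iff the tangential parts $u^\top=u-sN$ and $E_4^\top=E_4-dN$ are linearly dependent, where $s:=g(u,N)=a^2\alpha+b^2\beta+c^2\gamma$.

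\begin{enumerate}
\item Since $g(E_4^\top,E_4)=1-d^2>0$, dependence means $u^\top=\kappa E_4^\top$ for some scalar $\kappa$.
\item Comparing $E_4$-components gives $\kappa=-sd/(1-d^2)$.
\item Comparing $E_1,E_2,E_3$-components then gives $a(\alpha-\mu)=b(\beta-\mu)=c(\gamma-\mu)=0$, with $\mu=s/(1-d^2)$.
\item Since $\alpha<\beta<\gamma$, at most one of $a,b,c$ is nonzero. Conversely, every such $N$ satisfies these conditions.
\end{enumerate}

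With this in place, the rest of your argument goes through.

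Some minor points:
\begin{itemize}
\item In step 3 the ODE should read $x_t=\tfrac{d}{\sqrt{1-d^2}}e^{\lambda t}$, without the factor $\lambda$ on the left.
\item No $N\mapsto-N$ manoeuvre is needed for the sign of $\beta$. Flipping $N$ would change the sign of $d$, and no coordinate reflection restores it. Direct integration with $d=\tanh(|\beta|r)$ already gives $ye^{-\beta t}=\sinh(|\beta|r)/\beta$, i.e.\ $M_{3,r}$, for either sign of $\beta$.
\item Your observation that for $m=n$ the isometry $\varphi_3$, composed with the $z$-reflection, carries $M_{5,r}$ onto $M_{6,r}$ is correct. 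The paper does not make this remark, so its list is redundant in that case.
\end{itemize}
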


\begin{proof}
Suppose that $M$ is a hypersurface of $\mathrm{Sol}_{m,n}^4$ with
the unit normal vector field $N = aE_1 + bE_2 + cE_3 + dE_4$,
where $\{E_i\}_{i=1}^4$ is defined by \eqref{ttt:2.1} and
$a$, $b$, $c$, $d$ are constants.
It follows from $a^2+b^2+c^2+d^2=1$ that $|d|\leq1$.
Moreover, up to changing the sign of the unit normal
vector field, we may always set that $d\geq0$.
Then we consider the following two cases:

\vskip2mm
{\bf Case I.} $0\leq d<1$ on $M$.

In this case, we choose the orthonormal frame field $\{T_i\}_{i=1}^3$ as defined by \eqref{kkk:2.5}.
From \eqref{ttt:2.3}, the Weingarten formula, $a$, $b$, $c$ and $d$ are constants, we derive that
\begin{equation*}
\begin{aligned}
AT_1&=-\tilde{\nabla}_{T_1}N=-\tilde{\nabla}_{bE_1-aE_2+dE_3-cE_4}(aE_1+bE_2+cE_3+dE_4)\\
&=-ab\tilde{\nabla}_{E_1}E_1-b^2\tilde{\nabla}_{E_1}E_2
-bc\tilde{\nabla}_{E_1}E_3-bd\tilde{\nabla}_{E_1}E_4
+a^2\tilde{\nabla}_{E_2}E_1+ab\tilde{\nabla}_{E_2}E_2\\
& \ \ \
+ac\tilde{\nabla}_{E_2}E_3+ad\tilde{\nabla}_{E_2}E_4
-ad\tilde{\nabla}_{E_3}E_1-bd\tilde{\nabla}_{E_3}E_2
-cd\tilde{\nabla}_{E_3}E_3-d^2\tilde{\nabla}_{E_3}E_4\\
& \ \ \
+ac\tilde{\nabla}_{E_4}E_1+bc\tilde{\nabla}_{E_4}E_2
+c^2\tilde{\nabla}_{E_4}E_3+cd\tilde{\nabla}_{E_4}E_4\\
&=bd\alpha E_1-ad\beta E_2+d^2\gamma E_3+(ab(\beta-\alpha)-cd\gamma)E_4\\
&=(b^2 d \alpha + a b c (\alpha - \beta) + a^2 d \beta + d (c^2 + d^2) \gamma)T_1+(a b^2 (\beta-\alpha) + b c d (\alpha - \gamma) + a d^2 (\beta - \gamma))T_2\\
& \ \ \
+(a^2 b (\alpha - \beta) + b d^2 (\alpha - \gamma) + a c d (\gamma-\beta))T_3.
\end{aligned}
\end{equation*}
Similar calculations show that
\begin{equation*}
\begin{aligned}
AT_2& = (b c d (\alpha - \beta) + a c^2 (\alpha - \gamma) + a d^2 (\beta - \gamma))T_1\\
 &\ \ \ \  +(c^2 d \alpha + d (b^2 + d^2) \beta + a^2 d \gamma + a b c (\gamma-\alpha))T_2 \\
&\ \ \ \  +(c d^2 (\alpha - \beta) + a^2 c (\alpha - \gamma) + a b d (\gamma-\beta))T_3,\\
AT_3 &= (a c d (\alpha - \beta) + b d^2 (\alpha - \gamma) + b c^2 (\beta - \gamma))T_1\\
 &\ \ \ \  +(c d^2 (\alpha - \beta) + a b d (\gamma-\alpha) + b^2 c (\gamma-\beta))T_2 \\
&\ \ \ \  +(a^2 d \alpha + a b c (\beta - \gamma) + d (d^2 \alpha + c^2 \beta + b^2 \gamma))T_3.
\end{aligned}
\end{equation*}
Then, using the symmetry of the shape operator $A$ and $\beta=-\alpha-\gamma$, we obtain
\begin{align}
(-b c d - a c^2 - 2 a b^2) \alpha + (-2 b c d + a c^2 - a b^2) \gamma&=0,\label{ttt:4.2}\\
\left( -a c d + 2 a^2 b + b c^2 \right) \alpha + \left( a c d + a^2 b + 2 b c^2 \right) \gamma&=0,\label{ttt:4.3}\\
\left( a^2 c + 2 a b d - b^2 c \right) \alpha + \left( -a^2 c + a b d - 2 b^2 c \right) \gamma&=0.\label{ttt:4.4}
\end{align}
Put
\begin{equation}\label{uuuu:4.1}
\begin{aligned}
f_1:&=-b c d - a c^2 - 2 a b^2, & f_2&:= -2 b c d + a c^2 - a b^2,\\
f_3:&= -a c d + 2 a^2 b + b c^2, & f_4&:=a c d + a^2 b + 2 b c^2,\\
f_5:&=a^2 c + 2 a b d - b^2 c, &  f_6&:= -a^2 c + a b d - 2 b^2 c.
\end{aligned}
\end{equation}

We claim that $f_i=0$ if and only if $f_{i+1}=0$ for $i=1,3,5$. Indeed,
it follows from $\alpha+\beta+\gamma=0$ and $\alpha<\beta<\gamma$ that $\alpha<0$ and $\gamma>0$. These combining with the equations \eqref{ttt:4.2}--\eqref{uuuu:4.1} can derive this assertion.

We then claim that $f_3=0$. Indeed, if $f_3\neq0$ holds, then from \eqref{ttt:4.3}, we have $\alpha=-\frac{f_4}{f_3}\gamma$. If we assume that $f_1\neq0$ holds, then from \eqref{ttt:4.2}, we also have $\alpha=-\frac{f_2}{f_1}\gamma$. Since $\gamma>0$, we get
$$
0=f_1f_4-f_2f_3=-3 a b c^2 (a^2 + b^2 + c^2 + d^2)=-3 a b c^2,
$$
which shows that $abc=0$. But this cannot occur. In fact, if $ab\neq0$, then $c=0$.
These together with \eqref{ttt:4.2} give that $2\alpha+\gamma=0$, which contradicts $\alpha+\beta+\gamma=0$ and $\alpha<\beta<\gamma$.
Then we have $ab=0$.
If $a\neq0$, then $b=0$. These combining with \eqref{ttt:4.2} and $\alpha\neq\gamma$ give that $c=0$, which implies that $f_1=0$, a contradiction. Thus we have $a=0$.
The equation \eqref{ttt:4.2} becomes $(\alpha+2\gamma)bcd=0$.
Since $\alpha+2\gamma\neq0$, we get $bcd=0$. It follows from $a=0$ and $bcd=0$ that $f_1=0$, a contradiction. Thus we know that $f_1=0$ holds.
Then we have $f_2=0$. It follows from $4f_1-2f_2=0$
that $a(b^2+c^2)=0$. But this cannot also occur. If $a\neq0$, then we have $b=c=0$. Substituting this into \eqref{uuuu:4.1}, we get $f_3=0$, a contradiction. Thus we know that $a=0$. From $a=0$, $\alpha+2\gamma\neq0$ and \eqref{ttt:4.3}, we get
$bc=0$. It follows from $a=0$ and $bc=0$ that $f_3=0$, a contradiction.
Given the above, we get $f_3=0$.

It follows from $f_3=0$ that $f_4=0$.
Calculating $f_3+f_4=0$ and using \eqref{uuuu:4.1},
we get $b(a^2+c^2)=0$, which implies that $b=0$ or $a=c=0$.  When $b=0$, from $\alpha\neq\gamma$ and \eqref{ttt:4.2}, we get $ac=0$, which yields that $a=0$ or $c=0$.

In summary, we know that the solutions of \eqref{ttt:4.2}--\eqref{ttt:4.4} are $a=c=0$, $b=c=0$ or $a=b=0$, then we further consider the following three subcases:

\vskip2mm
{\bf Case I-(1).} $a=c=0$ on $M$.

In this subcase, we have $N=bE_2+dE_4$, where $b^2+d^2=1$.
Up to the action of reflection in the $y$-coordinate,
we assume that $b=-\sqrt{1-d^2}$.

On $M$, we consider the orthonormal frame field $\{W_i\}_{i=1}^3$:
\begin{equation}\label{ppp:4.5}
W_1=E_1,\ \ W_2=d E_2+\sqrt{1-d^2}E_4,\ \ W_3=E_3.
\end{equation}
By using \eqref{ttt:2.3}, \eqref{kkk:2.6}, \eqref{ppp:4.5} and $d$ is a constant,
we obtain
\begin{equation}\label{ppp:4.7}
\begin{aligned}
\nabla_{W_1}W_1 &=\sqrt{1-d^2} \alpha W_2,  \ \ \nabla_{W_1}W_2 =-\sqrt{1-d^2} \alpha W_1,\\
\nabla_{W_3}W_2&=-\sqrt{1-d^2} \gamma W_3,
\ \ \nabla_{W_3}W_3 =\sqrt{1-d^2} \gamma W_2,\\
\nabla_{W_1}W_3 &=\nabla_{W_2}W_1=\nabla_{W_2}W_2=\nabla_{W_2}W_3=\nabla_{W_3}W_1=0,
\end{aligned}
\end{equation}
\begin{equation}\label{zx1.1}
AW_1=d \alpha W_1,       \ \ AW_2= d \beta W_2,      \ \ AW_3 =d \gamma W_3.
\end{equation}
It follows from \eqref{ppp:4.7} that
\begin{equation}\label{ppp:4.8}
[W_1,W_2]=-\sqrt{1-d^2} \alpha W_1,\ \ [W_1,W_3]=0,\ \ [W_2,W_3]=\sqrt{1-d^2} \gamma W_3.
\end{equation}
Then it can be checked that all the Gauss and Codazzi equations are satisfied.

From \eqref{ttt:2.1} and \eqref{ppp:4.5}, we deduce
\begin{equation}\label{ppp:4.6}
W_1(t)=W_3(t)=0,\ \ W_2(t)=\sqrt{1-d^2}.
\end{equation}

Now, we define a new frame field $\{X_i\}_{i=1}^3$ on $M$ as follows:
\begin{equation}\label{ppp:4.9}
X_1=e^{-\alpha t}W_1,\ \ X_2=W_2,\ \   X_3=e^{-\gamma t}\,W_3.
\end{equation}
By using \eqref{ppp:4.8}, \eqref{ppp:4.6} and \eqref{ppp:4.9}, we derive that $[X_i,X_j]=0$, $1\leq i<j\leq3$.

Then, we can locally identify $M$ with an open subset $\Omega$ of $\mathbb R^3$ and express the
hypersurface $M$ by an immersion
$$
\begin{aligned}
\Phi:\mathbb R^3\supset\Omega& \longrightarrow \mathrm{Sol}_{m, n}^4 ,\\
(u,v,w)&\longmapsto(x(u,v,w),y(u,v,w),z(u,v,w),t(u,v,w)),
\end{aligned}
$$
such that
\begin{equation}\label{ppp:4.10}
\begin{aligned}
&\mathrm{d} \Phi(\partial_u)
=\left(x_u,y_u,z_u,t_u\right)=X_1,\\
&\mathrm{d} \Phi(\partial_v)
=\left(x_v,y_v,z_v,t_v\right)=X_2,\\
&\mathrm{d} \Phi(\partial_w)
=\left(x_w,y_w,z_w,t_w\right)=X_3.
\end{aligned}
\end{equation}
According to \eqref{ttt:2.1}, \eqref{ppp:4.5} and \eqref{ppp:4.9}, we also have
\begin{equation}\label{ppp:4.11}
X_1=(1,0,0,0),\ \ X_2=(0,de^{\beta t},0,\sqrt{1-d^2}),\ \  X_3=(0,0,1,0).
\end{equation}
The equations \eqref{ppp:4.10} and \eqref{ppp:4.11} show that $y$ and $t$ depend only on $v$, $x$ depends only on $u$, and $z$ depends only on $w$.

It follows from \eqref{ppp:4.10} and \eqref{ppp:4.11} that $x=u+c_5$, $z=w+c_6$, $t=\sqrt{1-d^2}v+c_7$, where $c_5$, $c_6$ and $c_7$ are constants. Thus, $v=\tfrac{t-c_7}{\sqrt{1-d^2}}$ and
\begin{equation}\label{ppp:4.12}
v_t=\tfrac{1}{\sqrt{1-d^2}}.
\end{equation}
In what follows we use $t$ instead of $v$ as a local coordinate.

Then, from  \eqref{ppp:4.10}, \eqref{ppp:4.11} and \eqref{ppp:4.12}, we derive that
\begin{equation}\label{oay:4.12}
y_t=y_vv_t=\tfrac{d}{\sqrt{1-d^2}}e^{\beta t}.
\end{equation}
To solve equation \eqref{oay:4.12}, we consider the following two subcases:

\vskip2mm
{\bf Case I-(1)-(i).} $\beta\neq0$ on $M$.

In this subcase, we have $m\neq n$. Solving the equation \eqref{oay:4.12},
we get $y=\tfrac{1}{\beta}\tfrac{d}{\sqrt{1-d^2}}e^{\beta t}+c_8$, where $c_8$ is a constant.

Using $x$ and $z$ as the new local coordinates, we obtain
$$
\Phi(x,t,z)=(x, \tfrac{1}{\beta}\tfrac{d}{\sqrt{1-d^2}}e^{\beta t}+c_8,z,t).
$$
After a left translation by $(0,-c_8, 0, 0)$, which is an isometry of $\mathrm{Sol}_{m,n(m\neq n)}^4$, we get
\begin{equation*}
\Phi(x,t,z)=(x,\tfrac{1}{\beta}\tfrac{d}{\sqrt{1-d^2}}e^{\beta t},z,t).
\end{equation*}
Since $0\leq d< 1$, we may assume that $d=\tanh(|\beta| r)$ for some constant $r\geq 0$. By taking the
reparametrization $x_1=x$, $x_2=z$, $x_3=t+\tfrac{1}{\beta}\ln(\cosh (|\beta| r))$,  we get
$$
\Phi(x_1,x_2,x_3)=(x_1,\tfrac{1}{\beta}e^{\beta x_3}\tanh (|\beta| r),x_2,x_3-\tfrac{1}{\beta}\ln(\cosh (|\beta| r))).
$$
This yields that, up to isometries of $\mathrm{Sol}_{m,n(m\neq n)}^4$, $M$ is an open part of $M_{3,r}$ for some $r\geq0$.

\vskip2mm
{\bf Case I-(1)-(ii).} $\beta=0$ on $M$.

It follows from $\beta=0$ that $m=n$ holds. Substituting $\beta=0$ into \eqref{oay:4.12}, we derive that $y_t=\tfrac{d}{\sqrt{1-d^2}}$, which implies that $y=\tfrac{d}{\sqrt{1-d^2}}t+c_9$, where $c_9$ is a constant.

Using $x$ and $z$ as the new local coordinates, we obtain $\Phi(x,t,z)=(x, \tfrac{d}{\sqrt{1-d^2}} t+c_9,z,t)$.
After a left translation by $(0,-c_9, 0, 0)$, which is an isometry of $\mathrm{Sol}_{m,m}^4$, we get
\begin{equation*}
\Phi(x,t,z)=(x,\tfrac{d}{\sqrt{1-d^2}} t,z,t).
\end{equation*}
By taking the reparametrization $x_1=x$, $x_2=t$, $x_3=z$,  we get
$$
\Phi(x_1,x_2,x_3)=(x_1,\tfrac{d}{\sqrt{1-d^2}} x_2,x_3,x_2).
$$
This shows that, up to isometries of $\mathrm{Sol}_{m,m}^4$, $M$ is an open part of $M_{4,d}$ for some fixed $0\leq d<1$.

\vskip2mm
{\bf Case I-(2).} $b=c=0$ on $M$.

In this subcase,  we have $N=aE_1+dE_4$, where $a^2+d^2=1$.
Up to the action of reflection in the $x$-coordinate,
we assume that $a=-\sqrt{1-d^2}$.

Consider the orthonormal frame field
\begin{equation}\label{ttt:4.5}
W_1=d E_1+\sqrt{1-d^2} E_4,\ \ W_2=E_2,\ \ W_3=E_3,
\end{equation}
by using \eqref{ttt:2.3}, \eqref{kkk:2.6}, \eqref{ttt:4.5}, and $d$ is a constant, we obtain
\begin{equation}\label{ttt:4.7}
\begin{aligned}
&\nabla_{W_1}W_1= \nabla_{W_1}W_2=\nabla_{W_1}W_3=\nabla_{W_2}W_3=\nabla_{W_3}W_2=0,\\
&\nabla_{W_2}W_1=-\sqrt{1-d^2}\beta\, W_2,\ \
\nabla_{W_2}W_2 =\sqrt{1-d^2} \beta\,  W_1,\\
&\nabla_{W_3}W_1=-\sqrt{1-d^2} \gamma\, W_3,\ \
\nabla_{W_3}W_3=\sqrt{1-d^2} \gamma W_1,
\end{aligned}
\end{equation}
\begin{equation}\label{zx:1.2}
AW_1 =d \alpha W_1,    \ \    AW_2= d \beta W_2,   \ \      AW_3 =d \gamma W_3.
\end{equation}
It follows from \eqref{ttt:4.7} that
\begin{equation}\label{ttt:4.8}
[W_1,W_2]=\sqrt{1-d^2} \beta W_2,\ \ [W_1,W_3]=\sqrt{1-d^2} \gamma W_3,\ \ [W_2,W_3]=0.
\end{equation}
Then it can be checked that all the Gauss and Codazzi equations are satisfied.

Applying \eqref{ttt:2.1} and \eqref{ttt:4.5}, we have
\begin{equation}\label{ttt:4.6}
W_1(t)=\sqrt{1-d^2},\ \ W_2(t)=W_3(t)=0.
\end{equation}

Now, we define a new frame field $\{X_i\}_{i=1}^3$ on $M$ as follows:
\begin{equation}\label{ttt:4.9}
X_1=W_1,\ \ X_2=e^{-\beta t}\, W_2,\ \   X_3=e^{-\gamma t}\,W_3.
\end{equation}
By applying \eqref{ttt:4.8}, \eqref{ttt:4.6} and \eqref{ttt:4.9}, we derive that $[X_i,X_j]=0$, $1\leq i<j\leq3$.

Then, we can locally identify $M$ with an open subset $\Omega$ of $\mathbb R^3$ and express the
hypersurface $M$ by an immersion
$$
\begin{aligned}
\Phi:\mathbb R^3\supset\Omega& \longrightarrow \mathrm{Sol}_{m,n}^4 ,\\
(u,v,w)&\longmapsto(x(u,v,w),y(u,v,w),z(u,v,w),t(u,v,w)),
\end{aligned}
$$
such that
\begin{equation}\label{ttt:4.10}
\begin{aligned}
&\mathrm{d} \Phi(\partial_u)
=\left(x_u,y_u,z_u,t_u\right)=X_1,\\
&\mathrm{d} \Phi(\partial_v)
=\left(x_v,y_v,z_v,t_v\right)=X_2,\\
&\mathrm{d} \Phi(\partial_w)
=\left(x_w,y_w,z_w,t_w\right)=X_3.
\end{aligned}
\end{equation}
According to \eqref{ttt:2.1}, \eqref{ttt:4.5} and \eqref{ttt:4.9}, we also have
\begin{equation}\label{ttt:4.11}
X_1=(d e^{\alpha t},0,0,\sqrt{1-d^2}),\ \ X_2=(0,1,0,0),\ \  X_3=(0,0,1,0).
\end{equation}
The equations \eqref{ttt:4.10} and \eqref{ttt:4.11} show that $x$ and $t$ depend only on $u$, $y$ depends only on $v$, and $z$ depends only on $w$.

It follows from \eqref{ttt:4.10} and \eqref{ttt:4.11} that $y=v+c_{10}$, $z=w+c_{11}$, where $c_{10}$ and $c_{11}$ are constants. Using \eqref{ttt:4.10} and \eqref{ttt:4.11}, we also get $t=\sqrt{1-d^2}u+c_{12}$,  where $c_{12}$ is a constant.
Thus, $u=\tfrac{t-c_{12}}{\sqrt{1-d^2}}$ and
\begin{equation}\label{ttt:4.12}
u_t=\tfrac{1}{\sqrt{1-d^2}}.
\end{equation}
In what follows we use $t$ instead of $u$ as a local coordinate.

Then, from  \eqref{ttt:4.10}, \eqref{ttt:4.11} and \eqref{ttt:4.12}, we derive that $x_t=x_uu_t=\tfrac{d}{\sqrt{1-d^2}}e^{\alpha t}$, which implies that $x=\tfrac{1}{\alpha}\tfrac{d}{\sqrt{1-d^2}}e^{\alpha t}+c_{13}$, where $c_{13}$ is a constant.

Using $y$ and $z$ as the new local coordinates, we obtain $\Phi(t,y,z)=(\tfrac{1}{\alpha}\tfrac{d}{\sqrt{1-d^2}}e^{\alpha t}+c_{13},y,z,t)$.
After a left translation by $(-c_{13}, 0, 0, 0)$, which is an isometry of $\mathrm{Sol}_{m,n}^4$, we get
\begin{equation*}
\Phi(t,y,z)=(\tfrac{1}{\alpha}\tfrac{d}{\sqrt{1-d^2}}e^{\alpha t},y,z,t).
\end{equation*}
Since $0\leq d< 1$, we may assume that $d=\tanh(-\alpha r)$ for some constant $r\geq 0$. By taking the
reparametrization $x_1=y$, $x_2=z$, $x_3=t+\tfrac{1}{\alpha}\ln(\cosh(-\alpha r))$, we get
$$
\Phi(x_1,x_2,x_3)=(\tfrac{1}{\alpha}e^{\alpha x_3}\tanh (-\alpha r),x_1,x_2,x_3-\tfrac{1}{\alpha}\ln(\cosh (-\alpha r))).
$$
This shows that, up to isometries of $\mathrm{Sol}_{m,n}^4$, $M$ is an open part of $M_{5,r}$ for some $r\geq0$.

\vskip2mm
{\bf Case I-(3).} $a=b=0$ on $M$.

In this subcase,  we have $N=cE_3+dE_4$, where $c^2+d^2=1$.
Up to the action of reflection in the $z$-coordinate,
we assume that $c=-\sqrt{1-d^2}$. Put
\begin{equation}\label{ooo:4.5}
W_1=E_1,\ \ W_2=E_2,\ \ W_3=dE_3+\sqrt{1-d^2} E_4.
\end{equation}
It follows from \eqref{ttt:2.3}, \eqref{kkk:2.6}, \eqref{ooo:4.5} and $d$ is a constant that
\begin{equation}\label{ooo:4.7}
\begin{aligned}
&\nabla_{W_1}W_1=\sqrt{1-d^2} \alpha W_3,\ \ \ \
\nabla_{W_1}W_3=-\sqrt{1-d^2} \alpha W_1,\\
&\nabla_{W_2}W_2=\sqrt{1-d^2} \beta W_3,\ \ \ \
\nabla_{W_2}W_3=-\sqrt{1-d^2} \beta W_2, \\
&\nabla_{W_1}W_2=\nabla_{W_2}W_1=\nabla_{W_3}W_1=\nabla_{W_3}W_2=\nabla_{W_3}W_3=0,
\end{aligned}
\end{equation}
\begin{equation}\label{zx:1.3}
AW_1 =d \alpha W_1, \ \   AW_2= d \beta W_2,   \ \   AW_3 =d \gamma W_3.
\end{equation}
From \eqref{ooo:4.7}, we have
\begin{equation}\label{ooo:4.8}
[W_1,W_2]=0,\ \ [W_1,W_3]=-\sqrt{1-d^2} \alpha W_1,\ \ [W_2,W_3]=-\sqrt{1-d^2} \beta W_2.
\end{equation}
Then it can be checked that all the Gauss and Codazzi equations are satisfied.

By using \eqref{ttt:2.1} and \eqref{ooo:4.5}, we get
\begin{equation}\label{ooo:4.6}
W_1(t)=W_2(t)=0,\ \ W_3(t)=\sqrt{1-d^2}.
\end{equation}

Now, we define a new frame field $\{X_i\}_{i=1}^3$ on $M$ as follows:
\begin{equation}\label{ooo:4.9}
X_1=e^{-\alpha t}W_1,\ \ X_2=e^{-\beta t}W_2,\ \  X_3=\,W_3.
\end{equation}
By applying \eqref{ooo:4.8}, \eqref{ooo:4.6} and \eqref{ooo:4.9}, we derive that $[X_i,X_j]=0$, $1\leq i<j\leq3$.

Then, we can locally identify $M$ with an open subset $\Omega$ of $\mathbb R^3$ and express the
hypersurface $M$ by an immersion
$$
\begin{aligned}
\Phi:\mathbb R^3\supset\Omega& \longrightarrow \mathrm{Sol}_{m,n}^4 ,\\
(u,v,w)&\longmapsto(x(u,v,w),y(u,v,w),z(u,v,w),t(u,v,w)),
\end{aligned}
$$
such that
\begin{equation}\label{ooo:4.10}
\begin{aligned}
&\mathrm{d} \Phi(\partial_u)
=\left(x_u,y_u,z_u,t_u\right)=X_1,\\
&\mathrm{d} \Phi(\partial_v)
=\left(x_v,y_v,z_v,t_v\right)=X_2,\\
&\mathrm{d} \Phi(\partial_w)
=\left(x_w,y_w,z_w,t_w\right)=X_3.
\end{aligned}
\end{equation}
According to \eqref{ttt:2.1}, \eqref{ooo:4.5} and \eqref{ooo:4.9}, we also have
\begin{equation}\label{ooo:4.11}
X_1=(1,0,0,0),\ \ X_2=(0,1,0,0),\ \ X_3=(0,0,d e^{\gamma t},\sqrt{1-d^2}).
\end{equation}
The equations \eqref{ooo:4.10} and \eqref{ooo:4.11} show that $z$ and $t$ depend only on $w$, $x$ depends only on $u$, and $y$ depends only on $v$.

It follows from \eqref{ooo:4.10} and \eqref{ooo:4.11} that $x=u+c_{14}$, $y=v+c_{15}$, where $c_{14}$ and $c_{15}$ are constants. Using \eqref{ooo:4.10} and \eqref{ooo:4.11}, we also get $t=\sqrt{1-d^2}w+c_{16}$,  where $c_{16}$ is a constant.
Thus, $w=\tfrac{t-c_{16}}{\sqrt{1-d^2}}$ and
\begin{equation}\label{ooo:4.12}
w_t=\tfrac{1}{\sqrt{1-d^2}}.
\end{equation}
In what follows we use $t$ instead of $w$ as a local coordinate.

Then, from  \eqref{ooo:4.10}, \eqref{ooo:4.11} and \eqref{ooo:4.12}, we derive that $z_t=z_ww_t=\tfrac{d}{\sqrt{1-d^2}}e^{\gamma t}$, which together with $\gamma>0$ implies that $z=\tfrac{1}{\gamma}\tfrac{d}{\sqrt{1-d^2}}e^{\gamma t}+c_{17}$, where $c_{17}$ is a constant.

Using $x$ and $y$ as the new local coordinates, we obtain
$\Phi(x,y,t)=(x,y,\tfrac{1}{\gamma}\tfrac{d}{\sqrt{1-d^2}}e^{\gamma t}+c_{17},t)$.
After a left translation by $(0,0,-c_{17}, 0)$, we get
\begin{equation*}
\Phi(x,y,t)=(x,y,\tfrac{1}{\gamma}\tfrac{d}{\sqrt{1-d^2}}e^{\gamma t},t).
\end{equation*}
Since $0\leq d< 1$, we may assume that $d=\tanh(\gamma r)$ for some constant $r\geq 0$.
By taking the
reparametrization $x_1=x$, $x_2=y$, $x_3=t+\tfrac{1}{\gamma}\ln(\cosh (\gamma r))$,  we get
$$
\Phi(x_1,x_2,x_3)=(x_1,x_2,\tfrac{1}{\gamma}e^{\gamma x_3}\tanh (\gamma r),x_3-\tfrac{1}{\gamma}\ln(\cosh (\gamma r))).
$$
This shows that, up to isometries of $\mathrm{Sol}_{m,n}^4$, $M$ is an open part of $M_{6,r}$ for some $r\geq0$.

\vskip2mm
{\bf Case II.} $d^2=1$ on $M$.

In this case, the unit normal vector field is $N=E_4$, and $TM=\text{span}\{E_1, E_2, E_3\}$, where $\{E_i\}_{i=1}^4$ is defined by \eqref{ttt:2.1}. It follows from \eqref{ttt:2.0}, \eqref{ttt:2.1} and \eqref{ttt:2.2} that $M$ is an open part of $\{(x_1,x_2,x_3,r) \in \mathrm{Sol}_{m,n}^4 \mid x_1,x_2,x_3 \in \mathbb{R}\}$ for some $r \in \mathbb{R}$, which is congruent to $M_{7,0}$.

\vskip1mm
Conversely, according to Propositions \ref{prop:3.4}--\ref{prop:3.6}, we know that the hypersurfaces $M_{3,r}$, $M_{5,r}$, $M_{6,r}$ for some $r\geq 0$, $M_{4,d}$ for some $0\leq d<1$ and $M_{7,0}$
have constant angle functions.

In conclusion, we have completed the proof of Theorem \ref{thm:1.3}.
\end{proof}

Now, we give the proof of Theorem \ref{thm:1.4}.

\noindent
{\bf Proof of Theorem \ref{thm:1.4}.}

Let $M$ be a homogeneous hypersurface of $\mathrm{Sol}_{m,n}^4$ with unit normal vector field $N$, then $M$ is an orbit of some closed subgroup $G \subset \mathrm{Sol}_{m, n}^4$. For a fixed point $p_0\in M$ and any $p\in M$, there exists an isometry $\phi \in \mathrm{Sol}_{m,n}^4$ such that $\phi(M)=M$ and $\phi(p_0)=p$. These implies that $N_p = \pm {\mathrm d}\phi_{p_0}(N_{p_0})$. It follows from $E_i$ are left invariant vector fields on $\mathrm{Sol}_{m,n}^4$ that ${\mathrm d}\phi_{p_0}(E_i|_{p_0})=E_i|_p$ for $i=1,2,3,4$. Similar to the discussions of the proof of Theorem \ref{thm:1.2}, we derive
that the angle functions $a$, $b$, $c$ and $d$ are constants on $M$.
Then by Theorem \ref{thm:1.3}, up to isometries of $ \mathrm{Sol}_{m,n}^4$, $M$ is either $M_{3,r}$, $M_{5,r}$, $M_{6,r}$ for some $ r\geq0$, or $M_{4,d}$ for some $0\leq d<1$, or $M_{7,0}$.

Conversely, from Propositions \ref{prop:3.4}--\ref{prop:3.6}, we know that all these hypersurfaces are homogeneous.

In conclusion, we have completed the  proof of Theorem \ref{thm:1.4}. \qed

\section{ Proof of Theorem \ref{thm:1.6}}\label{sect:6}
Similarly, in order to prove Theorem \ref{thm:1.6}, we present the following theorem.

\begin{theorem}\label{thm:1.5}
Let $M$ be a hypersurface of $\mathrm{Nil}^4$ with constant angle functions $a$, $b$, $c$ and $d$. Then, up to isometries of $\mathrm{Nil}^4$, one of the following two cases occurs:
\begin{enumerate}[\rm(1)]
  \item $M$ is an open part of $M_{8,d}$ for some $0\leq d<1$;
  \item $M$ is an open part of  $M_{9,0}$.
\end{enumerate}
\end{theorem}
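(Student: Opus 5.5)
The plan is to follow the scheme used for Theorems \ref{thm:1.1} and \ref{thm:1.3}. Write $N=aE_1+bE_2+cE_3+dE_4$ with $a,b,c,d$ constant and, after possibly changing the sign of $N$, assume $d\ge 0$. Then split into Case I ($0\le d<1$) and Case II ($d=1$).

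\textbf{Case I: the symmetry equations.} Use the tangent frame $\{T_i\}$ of \eqref{kkk:2.5}. Since the angle functions are constant, $AT_i=-\tilde\nabla_{T_i}N$ is computed purely from the connection table \eqref{hhh:2.3}; for instance
$$AT_1=-\tilde\nabla_{bE_1-aE_2+dE_3-cE_4}(aE_1+bE_2+cE_3+dE_4),$$
which is expanded bilinearly and rewritten in the basis $T_1,T_2,T_3$. Imposing the symmetry of $A$ gives three polynomial equations in $a,b,c,d$, namely $g(AT_1,T_2)=g(AT_2,T_1)$ and its two analogues. I expect these equations, together with $a^2+b^2+c^2+d^2=1$ and $d<1$, to force $a=b=0$. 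The unique non-zero brackets $[E_2,E_4]=-E_1$ and $[E_3,E_4]=-E_2$ single out the $E_3,E_4$ directions, which is consistent with the examples $M_{8,d}$, whose normal is $N=-\sqrt{1-d^2}E_3+dE_4$.

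This elimination is the main obstacle. I would first look for combinations of the three equations that factor, for example into $a(\cdots)$ or $b(\cdots)$ times a positive quantity, as happened with $f_1f_4-f_2f_3$ and $f_3+f_4$ in Theorem \ref{thm:1.3}. Then I would run through the subcases $a\neq0$, $a=0$ and $b\neq0$, $a=b=0$, using $a^2+b^2+c^2+d^2=1$ each time to derive a contradiction or to reduce to $a=b=0$. Since $\mathrm{Nil}^4$ has no discrete isometry exchanging axes, unlike $\phi_4$ for $\mathrm{Sol}_1^4$, I do not expect a second admissible family that would have to be identified with the first.

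\textbf{Case I: integrating the hypersurface.} Once $a=b=0$, we have $N=cE_3+dE_4$ with $c^2+d^2=1$, and we apply $\psi_1$ from \eqref{zx:1.4} if necessary to arrange $c=-\sqrt{1-d^2}$. Then $TM$ is spanned by $E_1=\partial_x$, $E_2=t\partial_x+\partial_y$ and $W_3=dE_3+\sqrt{1-d^2}E_4$. By \eqref{hhh:2.1}, this span equals the span of $\partial_x$, $\partial_y$ and $d\partial_z+\sqrt{1-d^2}\,\partial_t$. Hence $M$ is an integral manifold of the closed form $\sqrt{1-d^2}\,dz-d\,dt$, so $\sqrt{1-d^2}\,z-dt=\text{const}$ on $M$. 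This direct integration avoids building a commuting frame, although one could equally construct a frame $X_i$ as before. Left translation by $(0,0,-z_0,0)$ shifts $z$ by a constant, and so it normalises the constant to zero. Proposition \ref{prop:3.11}(1) then identifies $M$ with an open part of $M_{8,d}$. For completeness I would also note that the Gauss and Codazzi equations hold, using \eqref{sss:3.000} and \eqref{sss:3.2}.

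\textbf{Case II and the converse.} If $d=1$, then $N=E_4$ and $TM=\mathrm{span}\{E_1,E_2,E_3\}=\mathrm{span}\{\partial_x,\partial_y,\partial_z\}$. So $t$ is constant on $M$, and $M$ is an open part of $M_{9,r}$, which is congruent to $M_{9,0}$ by Proposition \ref{prop:3.12}. Conversely, Propositions \ref{prop:3.11} and \ref{prop:3.12} show that $M_{8,d}$ and $M_{9,0}$ have constant angle functions, which completes the argument.
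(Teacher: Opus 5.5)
Apart from one step your argument is sound, and your integration in Case~I is a genuine shortcut of the paper's. The paper builds the commuting frame $X_1=W_1$, $X_2=-tW_1+W_2$, $X_3=W_3$, integrates to an explicit parametrization, translates, and only then reads off $\sqrt{1-d^2}\,z-dt=0$. You instead note directly from \eqref{hhh:2.1} that $TM=\mathrm{span}\{\partial_x,\partial_y,d\,\partial_z+\sqrt{1-d^2}\,\partial_t\}$ is the kernel of the exact form $\sqrt{1-d^2}\,dz-d\,dt$, and you normalize the constant with the left translation $(0,0,-z_0,0)$. That is correct, and no Gauss--Codazzi check is needed for this direction.

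The gap is the step that carries all the content of Case~I: that symmetry of $A$ forces $a=b=0$. You only announce it (``I expect\dots'') and never write down the three symmetry conditions. The heuristics you give (the brackets ``single out'' $E_3,E_4$; $\mathrm{Nil}^4$ has no axis-exchanging isometry) do not exclude a constant normal with $a\neq0$ or $b\neq0$. Since everything afterwards uses $a=b=0$, the classification is not yet proved.

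To close it, note that the conditions $g(AT_i,T_j)=g(T_i,AT_j)$ are \eqref{hhh:4.2}--\eqref{hhh:4.4}, namely $a^2b-b^2d+ac(b+d)=0$, $b^2c-a(a^2+c^2-bd)=0$ and $-b^3+abc-a^2(b+d)=0$. The paper argues as follows:
\begin{enumerate}[\rm(i)]
\item If $ab\neq0$, solve the third equation for $d$ and substitute into the first to get $a^4+a^2b^2+(ac-b^2)^2=0$, which is impossible.
\item If $b=0\neq a$, the second equation gives $a(a^2+c^2)=0$, which is impossible.
\item Hence $a=0$, and the third equation gives $b^3=0$.
\end{enumerate}

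A more transparent route is to extend $N$ with the same constant coefficients. By \eqref{hhh:2.3},
$\tilde\nabla_{E_1}N=\tilde\nabla_{E_3}N=\tfrac12(-dE_2+bE_4)$, $\tilde\nabla_{E_2}N=\tfrac12(-dE_1-dE_3+(a+c)E_4)$ and $\tilde\nabla_{E_4}N=\tfrac12(bE_1+(c-a)E_2-bE_3)$.
So the skew form $g(\tilde\nabla_XN,Y)-g(\tilde\nabla_YN,X)$ equals $(a\theta^2+b\theta^3)\wedge\theta^4$, where $\{\theta^i\}$ is the coframe dual to $\{E_i\}$. Symmetry of $A$ says this form vanishes on $N^\perp=\ker\nu$, with $\nu=a\theta^1+b\theta^2+c\theta^3+d\theta^4$. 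If $(a,b)\neq(0,0)$, the form is a nonzero decomposable $\alpha\wedge\beta$. Such a form vanishes on the hyperplane $\ker\nu$ only if $\nu\in\mathrm{span}\{\alpha,\beta\}=\mathrm{span}\{a\theta^2+b\theta^3,\theta^4\}$. This forces $a=0$ (no $\theta^1$-term) and then $b=0$ (no $\theta^2$-term), a contradiction. Neither argument needs $d<1$.
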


\begin{proof}
Assume that $M$ is a hypersurface of $\mathrm{Nil}^4$ with the unit normal vector field
$N=aE_1 + bE_2 + cE_3 + dE_4$, where $\{E_i\}_{i=1}^4$ is defined by \eqref{hhh:2.1} and
$a$, $b$, $c$, $d$ are constants.
From $a^2+b^2+c^2+d^2=1$, we get that $|d|\leq1$. Moreover, up to changing the sign of the unit normal vector field, we assume that $d\geq0$. Then we can consider the following two cases:

\vskip2mm
{\bf Case I.} $0\leq d<1$ on $M$.

In this case, we choose the orthonormal frame field $\{T_i\}_{i=1}^3$ as defined by \eqref{kkk:2.5}.
It follows from \eqref{hhh:2.3}, the Weingarten formula, $a$, $b$, $c$ and $d$ are constants that
\begin{equation*}
\begin{aligned}
AT_1&=-\tilde{\nabla}_{T_1}N=-\tilde{\nabla}_{bE_1-aE_2+dE_3-cE_4}(aE_1+bE_2+cE_3+dE_4)\\
&=-ab\tilde{\nabla}_{E_1}E_1-b^2\tilde{\nabla}_{E_1}E_2
-bc\tilde{\nabla}_{E_1}E_3-bd\tilde{\nabla}_{E_1}E_4
+a^2\tilde{\nabla}_{E_2}E_1+ab\tilde{\nabla}_{E_2}E_2\\
& \ \ \
+ac\tilde{\nabla}_{E_2}E_3+ad\tilde{\nabla}_{E_2}E_4
-ad\tilde{\nabla}_{E_3}E_1-bd\tilde{\nabla}_{E_3}E_2
-cd\tilde{\nabla}_{E_3}E_3-d^2\tilde{\nabla}_{E_3}E_4\\
& \ \ \
+ac\tilde{\nabla}_{E_4}E_1+bc\tilde{\nabla}_{E_4}E_2
+c^2\tilde{\nabla}_{E_4}E_3+cd\tilde{\nabla}_{E_4}E_4\\
&=\tfrac{1}{2}((bc-ad)E_1+(c(c-a)+d(b+d))E_2-(bc+ad)E_3+(a(a+c)-b(b+d))E_4)\\
&=(b^2c-abd-a(c^2 + d^2))T_1+\tfrac{1}{2}(-b^3+2abc-b^2d+a^2(b+d)+b(c^2-d^2)-d(c^2 + d^2))T_2\\
& \ \ \
+\tfrac{1}{2}(-a^3-a^2c+c^3+c(b + d)^2+a(b^2-c^2+2bd-d^2))T_3.
\end{aligned}
\end{equation*}
Similar calculations give that
\begin{equation*}
\begin{aligned}
AT_2& = \tfrac{1}{2} \left( -b^3+b^2 d - 2 a c d + a^2 (d-b) + b (c^2 - d^2)- d (c^2 + d^2) \right)T_1\\
&\ \ \ \  +(-b^2 c + b c d + (a - c) d^2)T_2 \\
&\ \ \ \ +\tfrac{1}{2} \left( -b (b^2 + (a - c)^2) - (a^2 + b^2 + 2 a c - c^2) d + b d^2 - d^3 \right)T_3,\\
AT_3 &= \tfrac{1}{2} \left( a^3 - a^2 c + a (b^2 + c^2 - d^2) + c (c^2-b^2+ 2 b d + d^2) \right)T_1\\
&\ \ \ \ +\tfrac{1}{2} \left( -2 a c d + a^2 (b + d) + (b - d) (b^2 - c^2 + d^2) \right)T_2\\
&\ \ \ \  +(c d (d-b) + a (c^2 + b d))T_3.
\end{aligned}
\end{equation*}
Using the symmetry of the shape operator $A$, we derive that
\begin{align}
a^2b-b^2d+ac(b+d)&=0,\label{hhh:4.2}\\
b^2c-a(a^2+c^2-bd)&=0,\label{hhh:4.3}\\
-b^3+abc-a^2(b+d)&=0.\label{hhh:4.4}
\end{align}

We claim that $a=b=0$ on $M$. Indeed, if $ab\neq0$ holds, from \eqref{hhh:4.4}, we derive that $d=\tfrac{abc-b^3}{a^2}-b$. Substituting this into \eqref{hhh:4.2} with $b\neq0$ yields
\begin{equation}\label{hhh:777}
a^2c^2-2ab^2c+a^4+a^2b^2+b^4=0.
\end{equation}
Regarding \eqref{hhh:777} as a quadratic equation in the variable $c$, its discriminant $\Delta$ is given by:
$$
\Delta=(-2ab^2)^2 - 4 a^2 (a^4 + a^2b^2 + b^4)=-4a^4(a^2 + b^2)<0,
$$
which implies that there does not exist the real solution for $c$. Therefore, we have $ab=0$.
If $a\neq0$ and $b=0$, from \eqref{hhh:4.3}, we derive that $a=0$, which is a contradiction.
Thus, we get $a=0$. Substituting this into \eqref{hhh:4.4} yields $b=0$.
We have verified this assertion.

Now, we have $N=cE_3+dE_4$, where $c^2+d^2=1$.
Up to the action of $\psi_1$ which defined by \eqref{zx:1.4},
we can assume that $c=-\sqrt{1-d^2}$.

We choose the orthonormal frame field on $M$:
\begin{equation}\label{hhh:4.5}
W_1=E_1,\ \ W_2=E_2,\ \ W_3=dE_3+\sqrt{1-d^2}E_4.
\end{equation}
From \eqref{hhh:2.3}, \eqref{kkk:2.6}, \eqref{hhh:4.5} and $d$ is a constant, we obtain
\begin{equation}\label{hhh:4.7}
\begin{aligned}
\nabla_{W_1}W_1 &=0,      & \nabla_{W_1}W_2 &= \tfrac{\sqrt{1-d^2}}{2}W_3,
& \nabla_{W_1}W_3 &=-\tfrac{\sqrt{1-d^2}}{2}W_2,\\
\nabla_{W_2}W_1 &=\tfrac{\sqrt{1-d^2}}{2}W_3        ,       & \nabla_{W_2}W_2 &=0,
&\nabla_{W_2}W_3 &=-\tfrac{\sqrt{1-d^2}}{2}W_1, \\
\nabla_{W_3}W_1 &=-\tfrac{\sqrt{1-d^2}}{2}W_2,       & \nabla_{W_3}W_2&=\tfrac{\sqrt{1-d^2}}{2}W_1,
& \nabla_{W_3}W_3 &=0,
\end{aligned}
\end{equation}
\begin{equation}\label{zx:1.5}
AW_1 =\tfrac{d}{2}W_2,   \ \ AW_2= \tfrac{d}{2}W_1+\tfrac{1}{2}W_3, \ \ AW_3 = \tfrac{1}{2}W_2.
\end{equation}
By using \eqref{hhh:4.7}, we get
\begin{equation}\label{hhh:4.8}
[W_1,W_2]=0,\ \ [W_1,W_3]=0,\ \ [W_2,W_3]=-\sqrt{1-d^2}W_1.
\end{equation}
Consequently, a direct computation confirms that all Gauss and Codazzi equations are satisfied.

The equations \eqref{hhh:2.1} and \eqref{hhh:4.5} leads to
\begin{equation}\label{hhh:4.6}
W_1(t)=W_2(t)=0, \ \ W_3(t)=\sqrt{1-d^2}.
\end{equation}

Now, we define a new frame field $\{X_i\}_{i=1}^3$ on $M$ as follows:
\begin{equation}\label{hhh:4.9}
X_1=W_1,\ \ X_2=-t\, W_1+ W_2,\ \   X_3=W_3.
\end{equation}
It follows from \eqref{hhh:4.8}, \eqref{hhh:4.6} and \eqref{hhh:4.9} that $[X_i,X_j]=0$ holds, $1\leq i<j\leq3$.

Then, we can locally identify $M$ with an open subset $\Omega$ of $\mathbb R^3$ and express the
hypersurface $M$ by an immersion
$$
\begin{aligned}
\Phi:\mathbb R^3\supset\Omega& \longrightarrow \mathrm{Nil}^4 ,\\
(u,v,w)&\longmapsto(x(u,v,w),y(u,v,w),z(u,v,w),t(u,v,w)),
\end{aligned}
$$
such that
\begin{equation}\label{hhh:4.10}
\begin{aligned}
&\mathrm{d} \Phi(\partial_u)
=\left(x_u,y_u,z_u,t_u\right)=X_1,\\
&\mathrm{d} \Phi(\partial_v)
=\left(x_v,y_v,z_v,t_v\right)=X_2,\\
&\mathrm{d} \Phi(\partial_w)
=\left(x_w,y_w,z_w,t_w\right)=X_3.
\end{aligned}
\end{equation}
From \eqref{hhh:2.1}, \eqref{hhh:4.5} and \eqref{hhh:4.9}, we get
\begin{equation}\label{hhh:4.11}
X_1=(1,0,0,0),\ \ X_2=(0,1,0,0),\ \  X_3=(\tfrac{d}{2}t^2,dt,d,\sqrt{1-d^2}).
\end{equation}
The equations \eqref{hhh:4.10} and \eqref{hhh:4.11} yields that $z$ and $t$ depend only on $w$, $x$ depends only on $u$ and $w$, and $y$ depends only on $v$ and $w$.

It follows from \eqref{hhh:4.10} and \eqref{hhh:4.11} that $t_w=\sqrt{1-d^2}$, which implies that $t=\sqrt{1-d^2}w+c_{18}$, where $c_{18}$ is a constant. It follows that $w=\tfrac{t-c_{18}}{\sqrt{1-d^2}}$ and
\begin{equation}\label{hhh:4.12}
w_t=\tfrac{1}{\sqrt{1-d^2}}.
\end{equation}
In what follows we use $t$ instead of $w$ as a local coordinate.

Then, from  \eqref{hhh:4.10}, \eqref{hhh:4.11} and \eqref{hhh:4.12}, we obtain
$$
x_t=\tfrac{d}{2\sqrt{1-d^2}}t^2,\ \ y_t=\tfrac{d}{\sqrt{1-d^2}}t,\ \ z_t=\tfrac{d}{\sqrt{1-d^2}},\ \
x_u=1,\ \ y_v=1.
$$
Solving these equations, we get $x(u,t)=u+\tfrac{d}{6\sqrt{1-d^2}}t^3+c_{19}$, $y(v,t)=v+\tfrac{d}{2\sqrt{1-d^2}}t^2+c_{20}$, $z=\tfrac{d}{\sqrt{1-d^2}}t+c_{21}$, where $c_{19}$, $c_{20}$ and $c_{21}$ are constants.
Hence, we obtain $\Phi(u,v,t)=(u+\tfrac{d}{6\sqrt{1-d^2}}t^3+c_{19},v+\tfrac{d}{2\sqrt{1-d^2}}t^2+c_{20},
\tfrac{d}{\sqrt{1-d^2}}t+c_{21},t)$.
After a left translation by $(-c_{19}, -c_{20}, -c_{21}, 0)$, which is an isometry of $\mathrm{Nil}^4$, we get
\begin{equation*}
\Phi(u,v,t)=(u+\tfrac{d}{6\sqrt{1-d^2}}t^3,v+\tfrac{d}{2\sqrt{1-d^2}}t^2,\tfrac{d}{\sqrt{1-d^2}}t,t).
\end{equation*}
This shows that $\sqrt{1-d^2}z-dt=0$ holds. Then from item (1) of Proposition \ref{prop:3.11}, we know that $M$ is an open part of $M_{8,d}$ for some $0\leq d<1$.

\vskip2mm
{\bf Case II.} $d^2=1$ on $M$.

In this case, the unit normal vector field is $N=E_4$, and $TM=\text{span}\{E_1, E_2, E_3\}$, where $\{E_i\}_{i=1}^4$ is defined by \eqref{hhh:2.1}. According to \eqref{hhh:2.0}, \eqref{hhh:2.1} and
\eqref{hhh:2.2}, we derive that $M$ is an open part of $\{(x_1,x_2,x_3,r) \in \mathrm{Nil}^4 \mid x_1,x_2,x_3 \in \mathbb{R}\}$ for some $r \in \mathbb{R}$, which is congruent to $M_{9,0}$.

Conversely, by Propositions \ref{prop:3.11} and \ref{prop:3.12}, we know that the hypersurfaces $M_{8,d}$ and $M_{9,0}$ have constant angle functions.

In conclusion, we have completed the proof of Theorem \ref{thm:1.5}.
\end{proof}

Now, we give the proof of Theorem \ref{thm:1.6}.

\noindent
{\bf Proof of Theorem \ref{thm:1.6}.}

Let $M$ be a homogeneous hypersurface of $\mathrm{Nil}^4$ with unit normal vector field $N$, then $M$ is an orbit of some closed subgroup $G \subset \mathrm{Nil}^4$. For a fixed point $p_0\in M$ and any $p\in M$. There exists an isometry $\phi \in \mathrm{Nil}^4$ such that $\phi(M)=M$ and $\phi(p_0)=p$,
which shows that $N_p = \pm {\mathrm d}\phi_{p_0}(N_{p_0})$. Then, similar to the proof of Theorem \ref{thm:1.2}, we obtain
that the angle functions $a$, $b$, $c$ and $d$ are constants on $M$.
Therefore according to Theorem \ref{thm:1.5}, up to isometries of $\mathrm{Nil}^4$, we know that $M$ is either $M_{8,d}$ for some $0\leq d<1$ or $M_{9,0}$.

Conversely, from Propositions \ref{prop:3.11} and \ref{prop:3.12}, we know that both $M_{8,d}$ for some $0\leq d<1$ and $M_{9,0}$ are homogeneous hypersurfaces.

In conclusion, we have completed the  proof of Theorem \ref{thm:1.6}. \qed






\begin{thebibliography}{11}

\bibitem{BM}
Belkhelfa, M., Mokni, H.:
{\it Classification of hypersurfaces in the four dimensional Thurston geometry $\mathrm{Sol}_{m, n}^4$}.
J. Geom. {\bf 116}(2), Art. 26, 12 pp. (2025)

\bibitem{BS3}
Berndt J., Suh Y.J. :
{\it Real hypersurfaces in Hermitian symmetric spaces},
Advances in Analysis and Geometry, vol. 5, De Gruyter, Berlin (2022)

\bibitem{BT}
Berndt, J., Tamaru, H.:
{\it Cohomogeneity one actions on noncompact symmetric spaces of rank one}.
Trans. Amer. Math. Soc. {\bf359}, 3425--3438 (2007)

\bibitem{CE}
Cartan, \'E.:
{\it Families de surfaces isoparam\'etriques dans les espaces \`a courbure constante}.
Ann. Mat. Pura Appl. {\bf17}(1), 177--191 (1938)

\bibitem{CR}
Cecil T.E., Ryan P.J.:
{\it Geometry of hypersurfaces},
Springer Monographs in Mathematics, Springer, New York (2015)

\bibitem{CQ}
Chi, Q-S.:
{\it The isoparametric story, a heritage of \'Elie Cartan}.
Proceedings of the International Consortium of Chinese Mathematicians 2018,
Int. Press, Boston, MA, 197--260 (2020)

\bibitem{DP}
de Lima, R.F., Pipoli, G.:
{\it Isoparametric hypersurfaces of $\mathbb{H}^{n}\times\mathbb{R}$ and $\mathbb{S}^{n}\times\mathbb{R}$}.
To appear in Ann. Sc. Norm. Super. Pisa Cl. Sci.
arXiv:2411.11506v2

\bibitem{DP2}
de Lima, R.F., Pipoli, G.:
{\it Isoparametric hypersurfaces in products of simply connected space forms}.
arXiv:2511.12527v1

\bibitem{DM1}
D'haene, M.:
{\it Thurston geometries in dimension four from a Riemannian perspective}.
arXiv:2401.05977v1

\bibitem{DWYZ}
D'haene, M.,  Wei, G.X., Yao, Z.K., Zhang, X.:
{\it Homogeneous hypersurfaces of the four-dimensional Thurston geometry $\mathrm{Sol}_{0}^{4}$}. arXiv:2508.10545v2

\bibitem{DDO1}
D\'iaz-Ramos, J. C., Dom\'inguez-V\'azquez, M., Otero, T.:
{\it Cohomogeneity one actions on symmetric spaces of noncompact type and higher rank}.
Adv. Math. {\bf428}, Art. 109165, 33 pp. (2023)

\bibitem{DDO}
D\'iaz-Ramos, J. C., Dom\'inguez-V\'azquez, M., Otero, T.:
{\it Homogeneous hypersurfaces in symmetric spaces}.
New trends in geometric analysis-Spanish Network of Geometric Analysis 2007--2021,
Springer, Cham, 141--190 (2023)

\bibitem{DHCB}
Djellali, N., Hasni, A., Cherif, A.M.,  Belkhelfa, M.:
{\it Classification of Codazzi and note on minimal hypersurfaces in $\mathrm{Nil}^4$}.
Int. Electron. J. Geom. {\bf16}(2), 707--714 (2023)

\bibitem{DFO}
Dom\'inguez-V\'azquez, M., Ferreira, T.A., Otero, T.:
{\it Polar actions on homogeneous $3$-spaces}.
Ann. Mat. Pura Appl. {\bf205}(2), 903--927 (2026)

\bibitem{DM}
Dom\'inguez-V\'azquez, M., Manzano, J.M.:
{\it Isoparametric surfaces in $\mathbb{E}(\kappa,\tau)$-spaces}.
Ann. Sc. Norm. Super. Pisa Cl. Sci. {\bf22}(1), 269--285 (2021)

\bibitem{EI}
Erjavec, Z., Inoguchi, J.:
{\it Minimal submanifolds in $\mathrm{Sol}_1^4$}.
Rev. R. Acad. Cienc. Exactas F\'is. Nat. Ser. A Mat. RACSAM {\bf117}(4), Art. 156, 36 pp. (2023)

\bibitem{EI1}
Erjavec, Z., Inoguchi, J.:
{\it Codazzi and totally umbilical hypersurfaces in $\mathrm{Sol}_1^4$}.
Glasg. Math. J. {\bf67}(3), 487--493 (2025)

\bibitem{FR}
Filipkiewicz, R.:
{\it Four dimensional geometries}.
PhD thesis, University of Warwick (1983)

\bibitem{GMY1}
Gao, D., Ma, H., Yao, Z.K.:
{\it Isoparametric hypersurfaces in product spaces of space forms}.
Differential Geom. Appl. {\bf95}, Art. 102155, 8 pp. (2024)

\bibitem{GMY}
Gao, D., Ma, H., Yao, Z.K.:
{\it On hypersurfaces of $\mathbb{H}^{2}\times\mathbb{H}^{2}$}.
Sci. China Math. {\bf67}(2), 339--366 (2024)

\bibitem{GQTY}
Ge, J.Q., Qian, C., Tang, Z.Z., Yan, W.J.:
{\it An overview of the development of isoparametric theory}.
Sci. Sin. Math. {\bf55}(1), 145--168 (2025)

\bibitem{HL}
Harvey, R., Lawson, H.B. Jr.:
{\it Calibrated geometries}.
Acta Math. {\bf148}, 47--157 (1982)

\bibitem{HL1}
Hsiang, W-Y., Lawson, H.B. Jr.:
{\it Minimal submanifolds of low cohomogeneity}.
J. Differential Geom. {\bf5}, 1--38 (1971)

\bibitem{KA}
Kollross, A.:
{\it A classification of hyperpolar and cohomogeneity one actions}.
Trans. Amer. Math. Soc. {\bf354}(2), 571--612 (2002)

\bibitem{MP}
		Meeks W.H. and P\'{e}rez J.:
{\it Constant mean curvature surfaces in metric Lie groups}.
Geometric Analysis: Partial Differential Equations and Surfaces,
Contemporary Mathematics (AMS) vol. 570, 25--110 (2012)

\bibitem{SS}
Sanmart\'in-L\'opez, V., Solonenko, I.:
{\it Classification of cohomogeneity-one actions on symmetric spaces of noncompact type}.
arXiv:2501.05553v2

\bibitem{SB}
Segre, B.:
{\it Famiglie di ipersuperficie isoparametriche negli spazi euclidei ad un qualunque numero di dimensioni}.
Atti Accad. Naz. Lincei Rend. Cl. Sci. Fis. Mat. Natur. {\bf27}, 203--207 (1938)

\bibitem{TR}
Takagi, R.:
{\it On homogeneous real hypersurfaces in a complex projective space}.
Osaka Math. J. {\bf10}, 495--506 (1973)

\bibitem{TT}
Takagi, R., Takahashi, T.:
{\it On the principal curvatures of homogeneous hypersurfaces in a sphere}.
Differential geometry (in honor of Kentaro Yano),
Kinokuniya Book Store, Tokyo, 469--481 (1972)

\bibitem{TXY1}
Tan, H.X., Xie Y.Q., Yan, W.J.:
{\it Isoparametric hypersurfaces in $\mathbb{S}^n\times\mathbb{R}^m$ and $\mathbb{H}^n\times\mathbb{R}^m$}.
To appear in Sci. China Math. arXiv:2511.07782v2

\bibitem{TW}
Thurston, W.P.:
{\it Three-dimensional geometry and topology}.
Princeton Mathematical Series, Princeton University Press, Princeton, NJ, x+311 pp. (1997)

\bibitem{UF}
Urbano, F.:
{\it On hypersurfaces of $\mathbb{S}^{2}\times\mathbb{S}^{2}$}.
Comm. Anal. Geom. {\bf27}(6), 1381--1416 (2019)

\bibitem{WCTC}
Wall, C.T.C.:
{\it Geometric structures on compact complex analytic surfaces}.
Topology {\bf25}(2), 119--153 (1986)

\end{thebibliography}
\end{document}